\documentclass[11pt]{amsart}
\usepackage{amssymb,amsmath,amsfonts,amscd,euscript}

\newcommand{\nc}{\newcommand}

\numberwithin{equation}{section}
\newtheorem{thm}{Theorem}[section]
\newtheorem{prop}[thm]{Proposition}
\newtheorem{lem}[thm]{Lemma}
\newtheorem{cor}[thm]{Corollary}
\theoremstyle{remark}
\newtheorem{rem}[thm]{Remark}

\newtheorem{example}[thm]{Example}
\newtheorem{dfn}[thm]{Definition}

\nc{\gl}{\mathfrak{gl}}
\nc{\GL}{\mathfrak{GL}}
\nc{\g}{\mathfrak{g}}
\nc{\gh}{\widehat\g}
\nc{\h}{\mathfrak{h}}
\nc{\la}{\lambda}
\nc{\al}{\alpha }
\nc{\be}{\beta }
\nc{\ve}{\varepsilon }
\nc{\om}{\omega }

\nc{\ta}{\theta}
\nc{\veps}{\varepsilon}
\nc{\ch}{{\mathop {\rm ch}}}
\nc{\Tr}{{\mathop {\rm Tr}\,}}
\nc{\Id}{{\mathop {\rm Id}}}
\nc{\ad}{{\mathop {\rm ad}}}
\nc{\bra}{\langle}
\nc{\ket}{\rangle}
\nc{\x}{{\bf x}}
\nc{\bs}{{\bf s}}
\nc{\bp}{{\bf p}}
\nc{\bc}{{\bf c}}
\nc{\pa}{\partial}
\nc{\ld}{\ldots}
\nc{\cd}{\cdots}
\nc{\hk}{\hookrightarrow}
\nc{\T}{\otimes}
\newcommand{\bea}{\begin{equation}}
\newcommand{\ena}{\end{equation}}
\nc{\gr}{\mathrm{gr}}
\nc{\ov}{\overline}

\nc{\cO}{\mathcal O}
\nc{\cF}{\mathcal F}
\nc{\cL}{\mathcal L}
\nc{\msl}{\mathfrak{sl}}
\nc{\mgl}{\mathfrak{gl}}
\nc{\U}{\mathrm U}
\nc{\V}{\EuScript V}
\nc{\bH}{\EuScript H}
\nc{\Res}{\mathrm{Res\ }}

\newcommand{\bC}{{\mathbb C}}

\newcommand{\bN}{{\mathbb N}}
\newcommand{\bP}{{\mathbb P}}
\newcommand{\bG}{{\mathbb G}}

\newcommand{\fb}{{\mathfrak b}}

\newcommand{\fn}{{\mathfrak n}}

\newcommand{\Fl}{\EuScript{F}}

\newcommand{\spa}{\mathrm{span}}
\newcommand{\ol}{\overline}
\newcommand{\bS}{{\bf S}}
\newcommand{\bV}{{\bf V}}
\newcommand{\bd}{{\bf d}}

\begin{document}

\title[Degenerate flag varieties of type A: BW theorem]
{Degenerate flag varieties of type A: Frobenius splitting and BW theorem}

\author{Evgeny Feigin and Michael Finkelberg}
\address{Evgeny Feigin:\newline
Department of Mathematics, National Research University Higher School of Economics,\newline
Russia, 117312, Moscow, Vavilova str. 7\newline
{\it and }\newline
Tamm Theory Division,
Lebedev Physics Institute
}
\email{evgfeig@gmail.com}
\address{Michael Finkelberg:\newline
IMU, IITP, and National Research University Higher School of Economics,\newline
Russia, 117312, Moscow, Vavilova str. 7}
\email{fnklberg@gmail.com}

\begin{abstract}
Let $\Fl^a_\la$ be the PBW degeneration of the flag varieties of type $A_{n-1}$.
These varieties are singular and are acted upon with the degenerate Lie group $SL_n^a$.
We prove that $\Fl^a_\la$ have rational singularities, are normal and
locally complete intersections, and
construct a desingularization $R_\la$ of $\Fl^a_\la$. The varieties $R_\la$ can
be viewed as  towers of successive $\bP^1$-fibrations, thus providing an analogue of the classical
Bott-Samelson-Demazure-Hansen desingularization. We prove that the varieties $R_\la$ are Frobenius
split. This gives us Frobenius splitting for the degenerate flag varieties and allows to prove the
Borel-Weil type theorem for $\Fl^a_\la$. Using the Atiyah-Bott-Lefschetz formula for
$R_\la$, we compute  the $q$-characters of the highest weight $\msl_n$-modules.
\end{abstract}

\maketitle

\section*{Introduction}
Let $\g$ be a simple Lie group and $G$ be the Lie group of $\g$. Fix a Cartan decomposition
$\g=\fb\oplus \fn^-$. Let $\g^a$ and $G^a$  be the degenerate Lie algebra and Lie group
(see \cite{Fe3}, \cite{Fe4}). Namely, $\g^a=\fb\oplus (\fn^-)^a$, where $(\fn^-)^a$
is an abelian ideal isomorphic to $\fn^-$ as a vector space and $\fb$ acts on $(\fn^-)^a$ via
the isomorphism $(\fn^-)^a\simeq \g/\fb$. The Lie group
$G^a$ is a semi-direct product of the Borel subgroup $B$ and the normal abelian
group $\bG_a^{\dim\fn}$, where $\bG_a=(\bC,+)$ is the additive group of the field.

Consider the complete flag variety $\Fl=G/B$. This variety has a degenerate version
$\Fl^a$ (see \cite{Fe3}, \cite{Fe4}). In this paper we are concerned with the
case $G=SL_n$. We denote the corresponding classical flag variety by $\Fl_n$ and
the degenerate version by $\Fl^a_n$.
For simplicity, we consider only the case of the complete flag varieties in the Introduction.
However, in the main body of the paper we work out the case of general
(parabolic) flag varieties as well.
The varieties $\Fl^a_n$ are singular projective algebraic varieties,
which can be explicitly described as follows. Fix a basis $w_1,\dots,w_n$ in an
$n$-dimensional vector space $W$ and define the projection operators $pr_d:W\to W$,
$pr_d(\sum_{i=1}^n c_iw_i)=\sum_{i\ne d} c_iw_i$. Let us denote by $Gr(d,n)$ the Grassmannian
of $d$-dimensional subspaces in $W$. Then $\Fl^a_n$ is the variety of collections
$(V_1,\dots,V_{n-1})$ of subspaces, $V_d\in Gr(d,n)$ such that
\[
pr_{d+1} V_d\subset V_{d+1},\ d=1,\dots,n-2.
\]
The group $G^a$ acts on $\Fl_n^a$ with an open $\bG_a^{\dim \fn}$-orbit.
The varieties $\Fl_a^n$ are flat degenerations of the classical flags $\Fl_n$.
Our first theorem is as follows:
\begin{thm}\label{th1}
The varieties $\Fl^a_n$ are normal locally complete intersections
(in particular, Cohen-Macaulay and even Gorenstein).
\end{thm}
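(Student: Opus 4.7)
The plan is to realize $\Fl^a_n$ as the scheme-theoretic zero locus of a natural section of a vector bundle on the smooth ambient variety $Y=\prod_{d=1}^{n-1} Gr(d,n)$. On the $d$-th factor let $\mathcal{V}_d\subset W\otimes\cO_Y$ denote the tautological subbundle, and set
\[
\mathcal{E}=\bigoplus_{d=1}^{n-2}\mathcal{H}om\!\left(\mathcal{V}_d,\,(W\otimes\cO_Y)/\mathcal{V}_{d+1}\right).
\]
The projections $pr_{d+1}:W\to W$ induce a canonical section $s$ of $\mathcal{E}$, and the conditions defining $\Fl^a_n$ identify it set-theoretically with the vanishing scheme $Z(s)$; I take this as its scheme structure.

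The key point is then to check that $s$ is a regular section. Since $\Fl^a_n$ is irreducible (it contains a dense $G^a$-orbit) and is a flat degeneration of $\Fl_n$, one has $\dim\Fl^a_n=n(n-1)/2$. A direct computation gives
\[
\dim Y-\tfrac{n(n-1)}{2}=\sum_{d=1}^{n-1}d(n-d)-\tfrac{n(n-1)}{2}=\sum_{d=1}^{n-2}d(n-d-1)=\operatorname{rank}\mathcal{E},
\]
so codimension matches rank. Combined with irreducibility and equidimensionality of $\Fl^a_n$, this forces $s$ to be regular. Hence $\Fl^a_n$ is a local complete intersection inside the smooth variety $Y$; as any LCI subscheme of a smooth variety is Gorenstein, the Cohen--Macaulay and Gorenstein conclusions follow immediately.

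It remains to establish normality. By Serre's criterion $R_1+S_2$, and since $S_2$ is automatic from Cohen--Macaulayness, it suffices to verify $R_1$. The singular locus is closed and $G^a$-invariant, so it is a union of $G^a$-orbit closures; the open orbit is affine space and hence smooth, and the plan is to enumerate the codimension-one $G^a$-orbits --- equivalently, the generic points of the divisorial boundary components --- and at a generic point of each compute the rank of the Jacobian of $s$, confirming that the tangent space to $Z(s)$ still has the expected dimension $n(n-1)/2$.

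The main obstacle is precisely this $R_1$ step: without more structural input it boils down to an explicit analysis of the boundary orbit combinatorics. A cleaner alternative, which I would pursue if the orbit-by-orbit analysis turns out delicate, is to defer normality until after the Bott--Samelson-type desingularization $R_\la\to\Fl^a_\la$ constructed later in the paper is available, and then deduce normality from the identification $\pi_\ast\cO_{R_\la}=\cO_{\Fl^a_\la}$ --- a consequence of the rationality of the resolution (itself obtained via Frobenius splitting) together with the Cohen--Macaulay property already proved.
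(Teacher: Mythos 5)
Your reduction to a local complete intersection is essentially sound and runs parallel to the paper's own argument: the paper realizes $\Fl^a_n$ through the affine quiver-type scheme $Q_n$ of maps $(A_i,B_j)$ with $A_{i+1}B_i=pr_{i+1}A_i$, shows $Q_n$ is a complete intersection by the same dimension count you perform, and descends along the $\prod_i GL(W_i)$-torsor $Q^\circ_n\to\Fl^a_n$; your section $s$ of $\mathcal{E}$ on $\prod_d Gr(d,n)$ is a bundle-level reformulation of this, and the identity $\dim Y-n(n-1)/2=\operatorname{rank}\mathcal{E}$ is correct, granting Proposition \ref{prop}(2), which guarantees that the zero locus is exactly the irreducible $n(n-1)/2$-dimensional variety $\Fl^a_n$. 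Both routes then reduce normality (and reducedness of the LCI scheme) to smoothness off codimension two via Serre's criterion, respectively EGA IV 5.8.5--5.8.6.

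The genuine gap is that you never carry out that last step, which is where the real content lies. The paper does it explicitly in Lemma \ref{smooth}: using the resolution $R_n$ and the cell decomposition of Section 2, the codimension-one cells of $\Fl^a_n$ are listed in \eqref{codim1}, and Corollaries \ref{dc} and \ref{cdc} show that $\pi_n$ is an isomorphism over them, so $\Fl^a_n$ is smooth there (alternatively one can cite \cite{CFR}). Your ``plan'' to enumerate codimension-one $G^a$-orbits and compute Jacobian ranks is precisely this missing content; note also that you would need a Jacobian (or tangent-space) check at a point of the open orbit as well, since smoothness of the orbit as a variety does not by itself rule out that the scheme $Z(s)$ is generically non-reduced. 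More seriously, your proposed fallback is circular within the paper's architecture: for a proper birational morphism from a smooth variety onto a variety, the identification $\pi_{n*}\cO_{R_n}=\cO_{\Fl^a_n}$ is equivalent to normality of the target, and the paper deduces exactly this identification (and then the Frobenius splitting of $\Fl^a_n$, the rationality of its singularities, and the crepancy of $\tau_\bd$) from Theorem \ref{th1}, not the other way around. So deferring normality to the Frobenius-splitting and rational-singularity results later in the paper would require an independent argument (for instance a Mehta--Srinivas-type normality-from-splitting argument), which the proposal does not supply.
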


Recall (see \cite{FFoL1}, \cite{FFoL2}) that for each dominant integral $\g$-weight $\la$
there exists a $\g^a$-module $V_\la^a$ which is the associated graded of $V_\la$ with respect
to the PBW filtration. Similar to the classical situation (see \cite{K}), there exists a map
$\imath_\la$ from $\Fl_n^a$ to the projectivization $\bP(V_\la^a)$
(this map is an embedding if $\la$ is regular). Therefore, one can pull back the line bundles
$\cO(1)$ from the projective space to $\Fl_n^a$. We prove the following theorem, which is the
degenerate analogue of the Borel-Weil theorem:
\begin{thm}\label{th2}
Let $\g=\msl_n$. For any dominant integral weight $\la$ one has:
\[
H^0(\Fl_n^a, \imath_\la^*\cO(1))^*\simeq V^a_\la,\ H^{>0}(\Fl_n^a, \imath_\la^*\cO(1))=0.
\]
\end{thm}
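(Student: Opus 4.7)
The plan is to transfer the cohomology question from the singular variety $\Fl^a_n$ to its smooth desingularization $R_\la$ constructed earlier in the paper, to exploit the Frobenius splitting of $R_\la$ to obtain the vanishing of higher cohomology, and finally to identify $H^0$ by comparison with the classical Borel--Weil theorem via a flat degeneration argument.

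Writing $\cL_\la := \imath_\la^*\cO(1)$ and $\pi : R_\la \to \Fl^a_n$ for the resolution (factoring through $\Fl^a_\la$ when $\la$ is not regular), Theorem \ref{th1} guarantees that the target has rational singularities, and $R_\la$ is smooth because it is a tower of $\bP^1$-fibrations; hence $R\pi_*\cO_{R_\la} \simeq \cO_{\Fl^a_n}$. Combined with the projection formula, this yields $H^i(\Fl^a_n, \cL_\la) \simeq H^i(R_\la, \pi^*\cL_\la)$ for all $i$, reducing both assertions of the theorem to the corresponding statements on the smooth variety $R_\la$.

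Since $\imath_\la$ is defined as a morphism into $\bP(V^a_\la)$ via evaluation on a cyclic vector, the line bundle $\pi^*\cL_\la$ is globally generated, hence nef, on $R_\la$. Combined with the Frobenius splitting of $R_\la$ established earlier in the paper, the standard Mehta--Ramanathan--Kumar vanishing for Frobenius split projective varieties then yields $H^{>0}(R_\la, \pi^*\cL_\la) = 0$, which by the reduction above transports to the desired vanishing on $\Fl^a_n$. For the identification of $H^0$, the pullback of sections furnishes a natural map $(V^a_\la)^* \to H^0(\Fl^a_n, \cL_\la)$. Because $V^a_\la$ is generated cyclically from a highest weight vector under the $G^a$-action (by the construction of \cite{FFoL1}, \cite{FFoL2}), the image $\imath_\la(\Fl^a_n)$ is not contained in any proper linear subspace of $\bP(V^a_\la)$, so this map is injective. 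To promote it to an isomorphism, I would use that $\Fl^a_n$ is the special fiber of a flat family whose generic fiber is the classical flag variety $\Fl_n$, equipped with a relative line bundle specializing to $\cL_\la$ and to the standard ample line bundle on $\Fl_n$ with sections $V_\la^*$. Classical Borel--Weil gives $H^0(\Fl_n, \cL_\la) \simeq V_\la^*$ with vanishing higher cohomology, the vanishing step above supplies the same vanishing on the special fiber, and semicontinuity then forces $\dim H^0(\Fl^a_n, \cL_\la) = \dim V_\la = \dim V^a_\la$, so the injection must be an isomorphism.

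The principal obstacle lies in the vanishing step: the Mehta--Ramanathan theorem requires the Frobenius splitting of $R_\la$ to be compatible with a divisor in an appropriate linear class, typically one which makes the boundary sufficiently ample with respect to $\pi^*\cL_\la$. Verifying this compatibility depends on the precise construction of the splitting and the explicit tower structure of $R_\la$, and is presumably the content of the earlier Frobenius-splitting results alluded to in the abstract. Once this is in hand, the flat-family and semicontinuity arguments for the identification of $H^0$ are essentially routine in view of the PBW-degeneration realization of $\Fl^a_n$.
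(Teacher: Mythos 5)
Your overall strategy coincides with the paper's second proof: transfer cohomology to the resolution via rational singularities and the projection formula, obtain the vanishing from Frobenius splitting, and identify $H^0$ by combining the injection $(V^a_\la)^*\hookrightarrow H^0(\Fl^a_n,\cL_\la)$ with flatness of the degeneration and classical Borel--Weil. The genuine gap is in your vanishing step as formulated. The Mehta--Ramanathan proposition for a Frobenius split projective variety gives $H^{i}(X,\cL)=0$ only once one knows $H^{i}(X,\cL^{\T N})=0$ for all large $N$; this is automatic for ample $\cL$ (Serre vanishing) but false in general for a merely globally generated (hence nef, semi-ample) bundle. Indeed, if $\cL=f^*\cA$ with $\cA$ ample on $Y$ and $R^1f_*\cO_X\ne 0$, then $H^1(X,\cL^{\T N})\supseteq H^0(Y,\cA^{\T N}\T R^1f_*\cO_X)\ne 0$ for all large $N$ even when $X$ is split (take $X$ the product of an ordinary elliptic curve with $\bP^1$ and $\cL$ the pullback of $\cO(1)$). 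So ``nef $+$ split $+$ standard vanishing'' is not a valid step on $R_\la$. The two ways to repair it are exactly the paper's two proofs: (a) apply the splitting argument on the degenerate flag variety itself, where the bundle is very ample --- for non-regular $\la$ this forces you through the partial variety $\Fl^a_\bd$ (where $\imath_\la$ is an embedding; note there is no map $R_\bd\to\Fl^a_n$, so your ``$\pi$ factoring through $\Fl^a_\la$'' needs to be reorganized as the paper's square with $\mu:\Fl^a_n\to\Fl^a_\bd$ and $\eta:R_n\to R_\bd$, using that $\eta$ is a tower of $\bP^1$-fibrations); or (b) upgrade the splitting of $R_n$ to a splitting relative to a divisor pulled back from $\Fl^a_n$, which is precisely what the anticanonical formula of Theorem \ref{K-1} plus the Brion--Kumar $D$-splitting results provide. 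You correctly flag this compatibility as the obstacle, but leaving it as ``presumably the content of the earlier splitting results'' is exactly the missing step; the plain Frobenius splitting of $R_\la$ established by Theorem \ref{MR} does not by itself suffice.

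Two smaller corrections. The input $R\pi_*\cO=\cO$ is not part of Theorem \ref{th1} (normality and lci); it is the separate rational-singularities statement (Theorem \ref{ssylka}), whose proof needs crepancy of the resolution and a Grauert--Riemenschneider-type vanishing, with an extra argument in characteristic $p$. And in the $H^0$ identification, semicontinuity alone gives an inequality in the unhelpful direction; the equality $\dim H^0(\Fl^a_n,\cL_\la)=\dim V_\la$ follows from constancy of the Euler characteristic in the flat family together with the vanishing of higher cohomology on the special fiber, after which your injectivity argument closes the proof exactly as in the paper.
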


We note that this theorem agrees with the fact that the  varieties $\Fl_n^a$ are
flat degenerations of the classical flags $\Fl_n$.
Our main tool for the proof of Theorems \ref{th1} and \ref{th2} is an explicit construction for
desingularization of $\Fl_n^a$.
Namely, consider the variety $R_n$ consisting of collections of subspaces
$V_{i,j}$, $1\le i\le j\le n-1$
such that $V_{i,j}\in Gr(i,n)$ and the following conditions hold:
\begin{itemize}
\item $V_{i,j}\subset \spa(w_1,\dots,w_i,w_{j+1},\dots,w_n)$,
\item $V_{i,j}\subset V_{i+1,j},\ V_{i,j}\subset V_{i,j+1}\oplus \bC w_{j+1}$.
\end{itemize}
We show that $R_n$ is a successive tower of $\bP^1$ fibrations (and thus smooth) and the map $\pi_n:\ R_n\to \Fl_n^a$
sending $(V_{i,j})_{1\le i\le j<n}$ to $(V_{i,i})_{i=1}^{n-1}$ is a birational
isomorphism. 
Now the degenerate Borel-Weil theorem follows from the following result:
\begin{thm}\label{th3}
The varieties $\Fl_n^a$ and $R_n$ over $\overline{\mathbb F}_p$
are Frobenius split. The varieties $\Fl_n^a$ over $\overline{\mathbb F}_p$ and over $\bC$
have rational singularities.
\end{thm}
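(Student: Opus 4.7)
\medskip

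\noindent\textbf{Proof plan.} The plan has three layers: first, prove Frobenius splitting of the smooth projective variety $R_n$ by induction on the floors of its $\bP^1$-tower structure; second, push the splitting forward to $\Fl_n^a$ along the birational morphism $\pi_n$; and third, combine Frobenius splitting with the Cohen--Macaulay property from Theorem \ref{th1} to derive rational singularities, handling the characteristic zero case by a standard reduction-mod-$p$ argument. The model here is the classical treatment of ordinary flag varieties via Bott--Samelson desingularizations, with $R_n$ playing the role of the Bott--Samelson variety.

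\medskip

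For the first step I would invoke the following inductive lemma from Frobenius splitting theory: if $Y \to X$ is a Zariski-locally trivial $\bP^1$-fibration over a smooth Frobenius split variety $X$, then $Y$ is itself Frobenius split, and the splitting can be chosen to be compatible with any effective divisor meeting each fiber only in the sections $\{0\}$ and $\{\infty\}$ of a given trivialization. This rests on the explicit splitting of $\bP^1$ compatible with $\{0,\infty\}$ and its globalization along the base. Applied floor by floor to $R_n$, this yields a splitting compatible with the collection of boundary divisors in $R_n$ obtained by degenerating the inclusion $V_{i,j} \subset V_{i,j+1} \oplus \bC w_{j+1}$ to $V_{i,j} \subset V_{i,j+1}$; one arranges the layering so that these divisors cover the exceptional locus of $\pi_n$.

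\medskip

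Since $\Fl_n^a$ is normal by Theorem \ref{th1} and $\pi_n$ is proper birational from the smooth $R_n$, Zariski's main theorem gives $\pi_{n*}\mathcal O_{R_n} = \mathcal O_{\Fl_n^a}$. Pushing forward the splitting $\phi\colon F_*\mathcal O_{R_n} \to \mathcal O_{R_n}$ through $\pi_n$ then yields a Frobenius splitting of $\Fl_n^a$ (using that $F_*$ commutes with $\pi_{n*}$ since $F$ is affine). To upgrade this to rational singularities over $\overline{\mathbb F}_p$, one needs the vanishing $R^i\pi_{n*}\mathcal O_{R_n} = 0$ for $i>0$; this is the standard consequence of the splitting of $R_n$ being compatible with a divisor containing the exceptional locus of $\pi_n$ (the Mehta--van der Kallen / Kempf-style vanishing theorem for compatibly split resolutions). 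Combined with Cohen--Macaulayness from Theorem \ref{th1}, this gives rational singularities over $\overline{\mathbb F}_p$. The characteristic zero statement then follows by a standard semicontinuity argument: both $\Fl_n^a$ and $R_n$ admit natural $\bZ$-forms (the defining conditions are given by integral equations), and the fiberwise vanishing of $R^i\pi_{n*}\mathcal O_{R_n}$ on an open subset of $\mathrm{Spec}\,\bZ$ propagates to the generic fiber over $\bC$.

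\medskip

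The hardest part is the bookkeeping for the first step: organising the $\bP^1$-tower of $R_n$ so that each boundary divisor one needs in the compatibility list actually arises as a zero or infinity section at some floor of the tower, and so that together these divisors exhaust the exceptional locus of $\pi_n$. Once the combinatorial layering of the tower is set up correctly, the remaining steps are essentially mechanical applications of standard Frobenius splitting technology.
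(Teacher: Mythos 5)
Your layers two and three (pushing the splitting forward along $\pi_n$ using normality, and handling $\bC$ by spreading out) are consistent with the paper, but the key lemma on which your first layer rests is not a true statement, and this is a genuine gap rather than bookkeeping. Frobenius splitting does not ascend along Zariski-locally trivial $\bP^1$-fibrations over a split base: a splitting of the total space $Y=\bP(\mathcal E)\to X$ forces a nonzero section of $K_Y^{-(p-1)}$, i.e.\ of a twist of $\mathrm{Sym}^{2(p-1)}\mathcal E^{\vee}$ on $X$, and such sections need not exist even when $X$ is split (for instance, for the projectivization of a sufficiently general stable rank-two bundle with trivial determinant on an ordinary abelian surface this space of sections vanishes, so the total space cannot be split). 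Moreover, your compatibility clause ``the $\{0\}$ and $\{\infty\}$ sections of a given trivialization'' presupposes two disjoint global sections, i.e.\ that the relative anticanonical divisor is a sum of sections; but each floor $\rho_l:R_n(l)\to R_n(l-1)$ of the tower carries only \emph{one} canonical section $s_l$, and $\cO(2s_l)$ differs from the relative anticanonical bundle by the pullback of a nontrivial line bundle (the relative tangent bundle along $s_l$). Hence the fiberwise $(xy)^{p-1}$ splitting does not globalize for free; the real content is to control the twists that accumulate down the tower. This is exactly what the paper does: it proves $K^{-1}_{R_n}=\cO(\sum_{i\le j}Z_{i,j})\T\bigotimes_{i}\om_{i,i}\T\bigotimes_i\om_{i,i+1}$, checks that the residual bundle $\bigotimes_i\om_{i,i}\T\bigotimes_i\om_{i,i+1}$ is base-point free, and then applies the Mehta--Ramanathan criterion, which gives the splitting of $R_n$ compatibly with all the $Z_{i,j}$ in one stroke. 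Without this anticanonical computation (or an equivalent substitute) your induction has no engine.

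There is a second gap in the passage to rational singularities over $\overline{\mathbb F}_p$. The vanishing $R^i\pi_{n*}\cO_{R_n}=0$ is not a formal consequence of having a splitting compatible with divisors containing the exceptional locus: the characteristic-$p$ Grauert--Riemenschneider theorem (Mehta--van der Kallen; Theorem 1.3.14 in Brion--Kumar) gives $R^i\pi_{n*}K_{R_n}=0$, and only under the hypothesis that $\pi_n$ is an isomorphism off the union of the compatibly split divisors, which must be verified (the paper does this in a separate lemma). Converting the $K_{R_n}$-vanishing into $\cO$-vanishing is precisely where Theorem \ref{th1} enters: $\Fl^a_n$ is Gorenstein (being a local complete intersection) and $\pi_n$ is one-to-one off codimension two, so the resolution is crepant, $K_{R_n}=\pi_n^*K_{\Fl^a_n}$, and the projection formula then yields $R^i\pi_{n*}\cO_{R_n}=0$. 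Your final appeal to Cohen--Macaulayness does not substitute for this step (a normal Frobenius split, even Cohen--Macaulay, variety with a compatibly split resolution is not thereby known to have $R^i\pi_*\cO=0$). Once these two points are repaired, your remaining steps --- $\pi_{n*}\cO_{R_n}=\cO_{\Fl^a_n}$ from normality, and the reduction-mod-$p$ argument for the statement over $\bC$ (the paper instead applies classical Grauert--Riemenschneider directly over $\bC$) --- are sound.
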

For the proof we use the Mehta-Ramanathan criterion \cite{MR}.
Using the Atiyah-Bott-Lefschetz formula (\cite{AB}, \cite{T}) we deduce from Theorem \ref{th2}
a $q$-character formula for the characters
of $V_\la^a$ (an analogue of the Demazure character formula). The formula is a sum of contributions 
of the $2^{\dim\fn}$ torus fixed points in $R_n$.

An interesting problem is to generalize the whole picture to the case of arbitrary simple Lie groups.
However the only cases worked out so far are $SL_n$ and $Sp_{2n}$ (see \cite{FFiL}). The main
obstacle comes from the complicated structure of the PBW filtration, which is not understood
outside of types $A$ and $C$.

Our paper is organized as follows:\\
In Section $1$ we recall main definitions and fix notations.\\
In Section $2$ we construct the desingularizations $R_\la$ for the degenerate flag varieties.\\
In Section $3$ we prove that the varieties $\Fl^a_\la$ are normal locally complete intersections.\\
In Section $4$ we prove that the varieties $\Fl_\la^a$ and their desingularizations are Frobenius split.\\
In Section $5$ we prove that the varieties $\Fl_\la^a$ have rational
singularities, and use the results of the previous sections to deduce the
analogue
of the Borel-Weil theorem and the $q$-character formula for $V_\la^a$.

\section{Definitions and notations}
Let $\g$ be a simple Lie algebra. Fix a Cartan decomposition $\g=\fn\oplus \h\oplus\fn^-$,
$\fb=\h\oplus\fn$.
Let $R_+$ be the set of positive roots for $\g$ and $\al_d$, $\om_d$, $d=1,\dots,\mathrm{rk}(\g)$ be the simple
roots and fundamental
weights (see \cite{FH}). For a positive root $\al$ we sometimes write
$\al>0$ instead of $\al\in R_+$. Let $f_\al$, $\al>0$ be an $\h$-eigenbasis of $\fn^-$ and, similarly,
$e_\al$ for $\fn$. We denote by $G,B,N,T,N^-$ the Lie groups of $\g,\fb,\fn,\h,\fn^-$.

Let $(\fn^-)^a$ be an abelian Lie algebra with the underlying vector space $\fn^-$.
The degenerate Lie algebra $\g^a$ is isomorphic to $\fb\oplus (\fn^-)^a$, where both
$\fb$ and $(\fn^-)^a$ are subalgebras, $(\fn^-)^a$ is an abelian ideal and the structure
of the $\fb$-module on $(\fn^-)^a\simeq\g/\fb$ is induced by the adjoint action (see \cite{Fe3}, \cite{Fe4}).
We denote
the corresponding degenerate group by $G^a$. Thus, $G^a\simeq B\ltimes (N^-)^a$, where
$(N^-)^a$ is an abelian Lie group with the Lie algebra $(\fn^-)^a$, $(N^-)^a\simeq \bG_a^M$,
where $\bG_a=(\bC,+)$ is the additive group of the field and $M=\dim\fn$ is the number of positive roots.

Let $\la$ be a dominant integral weight of $\g$ and let $V_\la$ be the corresponding irreducible $\g$-module
with a highest weight vector $v_\la$. We have $\fn v_\la=0$, $hv_\la=\la(h)v_\la$ and
$V_\la=U(\fn^-)v_\la$.
We denote by $\Fl_\la$ the generalized flag variety:
$$\Fl_\la=G\cdot \bC v_\la=\overline{N^-\cdot \bC v_\la}\subset\bP(V_\la).$$
For example, for $\g=\msl_n$ the varieties $\Fl_{\om_d}$ are isomorphic to the Grassmannians
$Gr(d,n)$ and for regular $\la$ ($(\la,\om_d)>0$ for all $d$) the corresponding flag
variety $\Fl_\la$ is isomorphic to the variety of complete flags in $\bC^n$.
Denote by $U(\fn^-)_k$ the PBW (standard) filtration of the universal
enveloping algebra
$U(\fn^-)$:
\[
U(\fn^-)_k=\spa(x_1\dots x_l, x_i\in\fn^-, l\le k).
\]
The PBW filtration $U(\fn^-)_k v_\la$ on $V_\la$ is induced by the degree filtration.
We denote by $V_\la^a$ the associated graded module:
\[
V_\la^a=\bigoplus_{k\ge 0} V_\la^a(k)=\bigoplus_{k\ge 0} U(\fn^-)_k v_\la/U(\fn^-)_{k-1} v_\la.
\]
The $q$-character of $V_\la$ (the character of $V^a_\la$)  is defined by the formula
\[
\ch_q V_\la^a=\sum_{k\ge 0} q^k \ch V_\la(k).
\]
It is easy to see that the structure of $\g$-module on $V_\la$ induces the structures of $\g^a$-
and $G^a$-module on $V_\la^a$.
In particular, $V_\la^a=\bC[f_\al]_{\al>0}v_\la$.
The corresponding degenerate flag variety $\Fl^a_\la\subset\bP(V_\la^a)$ is defined
as the closure of the orbit of the line containing $v_\la$:
\[
\Fl^a_\la=\overline{G^a \cdot \bC v_\la}=\overline{(N^-)^a \cdot \bC v_\la}.
\]
In particular, $\Fl^a_\la$ are the $\bG_a^M$-varieties (see \cite{A}, \cite{AS}, \cite{HT}).

It is convenient to consider an extension $\g^a\oplus\bC d$ of the algebra $\g^a$, where $d$
is the PBW grading operator, i.e. $[d,\fb]=0$ and $[d,f_\al]=f_\al$ for any positive $\al$.
All the $\g^a$-modules $V_\la^a$ can be made into the $\g^a\oplus\bC d$-modules by setting $d=k$ on
$V_\la^a(k)$. The corresponding extended group is $G^a\rtimes \bC^*$. In particular, the torus
acting on $\bP(V_\la^a)$ is of dimension $rk(\g)+1$.

From now on we fix $\g=\msl_n$, $G=SL_n$.
Then all positive roots are of the form $\al_{i,j}=\al_i+\dots +\al_j$, $1\le i\le j<n$.
We denote the corresponding elements $f_{\al_{i,j}}$ and $e_{\al_{i,j}}$ by $f_{i,j}$ and $e_{i,j}$.
\begin{example}
Let $\la=\om_d$. Then $V^a_{\om_d}=\bigoplus_{k=0}^{\min(d,n-d)} V^a_{\om_d}(k)$.
The space
$V^a_{\om_d}(k)$ has a basis $w(S)$ labeled by collections $S=(l_1<\dots <l_d)$ such that $1\le l_i\le n$ and
$\#\{i:\ l_i>d\}=k$. We note that $w(S)$ are the images of the wedges $w_{l_1}\wedge\dots\wedge w_{l_d}$.
The operators $f_{i,j}$ act trivially on $V^a_{\om_d}$ unless $i\le d\le j$. If this condition is satisfied, then
$f_{i,j}$ acts via the usual formula for the action on a wedge power. Similarly, the operators $e_{i,j}$ act
trivially unless $i>d$ or $j<d$. The non-trivial operators act by the usual formula.
\end{example}

In contrast with the classical situation,  a representation $V^a_{\om_d}$ is no longer isomorphic to
$\bigwedge^d(V^a_{\om_1})$. However, $V^a_{\om_d}$ can be constructed as a wedge power of another $\g^a$-module.
Namely, let $W^{(d)}$ be an $n$-dimensional vector space with a basis $w_1,\dots,w_n$. We define a structure of
$\g^a$-module on $W^{(d)}$ as follows: $f_{i,j}$ acts trivially unless $i\le d\le j$ and
$e_{i,j}$ acts trivially unless $j<d$ or $i>d$. The non-trivial operators act by the usual formulas:
\[
f_{i,j}w_k=\delta_{i,k} w_{j+1},\ e_{i,j}w_k=\delta_{j+1,k}w_i.
\]
Then $V^a_{\om_d}\simeq \bigwedge^d(W^{(d)})$.
The following simple lemma will be important for us:
\begin{lem}\label{ij}
For all $1\le i\le j<n$ the subspaces $\spa(w_{i+1},\dots,w_j)\subset W^{(i)}$
are $\g^a$-invariant, making the quotients
\[
W_{i,j}=W^{(i)}/\spa(w_{i+1},\dots,w_j)
\]
into $\g^a$- and $G^a$-modules.
\end{lem}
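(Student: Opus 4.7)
The plan is to verify directly that $U:=\spa(w_{i+1},\dots,w_j)\subset W^{(i)}$ (empty when $j=i$) is stable under $\g^a$, after which $W_{i,j}=W^{(i)}/U$ inherits a $\g^a$-module structure, and the $G^a$-structure will follow by exponentiation. Since $\g^a=\h\oplus\fn\oplus(\fn^-)^a$, it suffices to check invariance on the three families of generators $\h$, $\{e_{k,l}\}$, $\{f_{k,l}\}$ applied to a typical basis vector $w_m$ with $i+1\le m\le j$.

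First, $\h$ acts diagonally on $W^{(i)}$ (with $w_m$ of weight $\veps_m$), so $U$ is trivially $\h$-stable. Next, by definition of the $\g^a$-action on $W^{(i)}$ the operator $f_{k,l}$ is nonzero only for $k\le i\le l$, in which case $f_{k,l}w_m=\delta_{k,m}w_{l+1}$; but $m\ge i+1>i\ge k$ forces $f_{k,l}w_m=0$, so already the entire ideal $(\fn^-)^a$ annihilates $U$. Finally, $e_{k,l}$ is nonzero only when $l<i$ or $k>i$, via $e_{k,l}w_m=\delta_{l+1,m}w_k$; the delta forces $l=m-1\ge i$, excluding the case $l<i$, so necessarily $k>i$, and combined with $k\le l=m-1\le j-1$ (the root-vector constraint $k\le l$) this gives $i+1\le k\le j-1$, so $w_k\in U$.

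These checks show $U$ is $\g^a$-stable. To upgrade to a $G^a$-action on $W_{i,j}$, note that $B$ integrates the $\fb$-action in the usual way, while each $f_{k,l}$ is nilpotent on $W^{(i)}$---in fact $f_{k,l}^2=0$, since $k\le l<l+1$ implies $f_{k,l}w_{l+1}=0$---so exponentiating the commuting nilpotents $\{f_{k,l}\}$ yields an algebraic action of the abelian unipotent group $(N^-)^a\simeq\bG_a^M$; the two actions then combine into one of $G^a=B\ltimes(N^-)^a$ on $W_{i,j}$.

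There is no genuine obstacle: the lemma is a combinatorial compatibility check that works precisely because the PBW degeneration has suppressed exactly the operators that could push $U$ outside itself. The only point worth emphasizing is that the root-vector inequality $k\le l$ is essential---it is what keeps the image $w_k$ strictly below the index $j$ when $e_{k,l}$ acts on the boundary vector $w_j$.
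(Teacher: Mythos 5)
Your proof is correct and is exactly the straightforward check intended here: the paper states this lemma without proof, and your case analysis ($\h$ diagonal, every nontrivial $f_{k,l}$ killing $w_m$ for $m\ge i+1$ since $k\le i$, and every nontrivial $e_{k,l}$ with $\delta_{l+1,m}\ne 0$ forcing $k>i$ and $k\le l=m-1\le j-1$, hence landing in $\spa(w_{i+1},\dots,w_j)$) is the verification being taken for granted. The final passage to $G^a=B\ltimes(N^-)^a$ by integrating the algebraic $\fb$-action and exponentiating the commuting square-zero operators $f_{k,l}$ is also fine.
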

In what follows we denote the images in $W_{i,j}$ of the basis vectors $w_k$ by the same
symbols $w_k$. For instance, (the images of) $w_1,\dots,w_i,w_{j+1},\dots,w_n$ form a basis
of $W_{i,j}$.

\begin{example}\label{Gr}
Let $\la=\om_d$. Then $\Fl^a_{\om_d}\simeq \Fl_{\om_d}\simeq Gr(d,n)$
(since the radical in $\msl_n$ corresponding to any fundamental weight is abelian, i.e.
fundamental representations are cominuscule). The torus $T$ acts on $Gr(d,n)$ with
a finite number of fixed points, which are labeled by collections $S=(l_1,\dots,l_d)$ with
$1\le l_1<\dots <l_d\le n$. Let $p(S)\in Gr(d,n)$ be the corresponding point, i.e.
$p(S)=\bC w(S)\in \bP(V^a_{\om_d})$. Then $Gr(d,n)$ is the disjoint union of affine cells
$G^a\cdot p(S)$. We note however that these cells are different from the classical ones $B\cdot p(S)$.
Namely, let $k$ be a number such that $l_k\le d<l_{k+1}$ and let $T_d:W\to W$ be an isomorphism
given by
$$
T_d w_1=w_{d+1},\dots,T_d w_{n-d}=w_n, T_d w_{n-d+1}=w_1,\dots,T_d w_n=w_d.
$$
Then
\[
G^a\cdot p(S)=T_d\left(B\cdot p(l_{k+1}-d,\dots,l_d-d,l_1-d+n,\dots, l_k-d+n)\right),
\]
where $B$ acts on $Gr(d,n)$ classically (i.e. as a subgroup of $SL_n$). We note that $B$ considered
as a subgroup of $G^a$ acts on $Gr(d,n)$, but this action is different from the classical one
(for instance, for $n=2$ the subgroup $B\subset SL_2^a$ acts trivially on $\bP^1$).
We denote a cell $G^a\cdot p(S)$ by $C(S)$.
\end{example}

For general $\la$ the
varieties $\Fl^a_\la$ are not isomorphic to the classical flag varieties.
These varieties enjoy an explicit description as subvarieties inside the product of Grassmannians.
We first consider the case of the complete flag varieties, corresponding to the
case of regular $\la$. These varieties do not depend on (regular) $\la$. We denote them by $\Fl^a_n$.

Let $w_1,\dots,w_n$ be the standard basis of the fundamental vector representation $W=V_{\om_1}$.
We denote by $pr_d:W \to W$ the projection operators defined by
$pr_d(\sum_{i=1}^n c_iw_i)=\sum_{i\ne d} c_iw_i$.
In what follows we will need the following properties of $\Fl^a_n$ (see \cite{Fe3},\cite{Fe4}).
\begin{prop}\label{prop}
$1)$.\ The degenerate complete flag varieties $\Fl^a_n$ are flat degenerations of the classical flag varieties
$\Fl_n$.\\
$2)$.\ The variety $\Fl^a_n$ can be realized inside the product of Grassmannians
$\prod_{d=1}^{n-1} Gr(d,n)$ as a
subvariety of collections $(V_d)_{d=1}^{n-1}$ satisfying:
\[
pr_{d+1} V_d\subset V_{d+1},\ d=1,\dots,n-2.
\]
$3)$.\ The variety $\Fl^a_n$ has a cell decomposition
$$\bigsqcup_{S_1,\dots,S_{n-1}} \left(\Fl^a_n\cap \prod_{i=1}^{n-1} C(S_i)\right),$$
where the disjoint union is taken over the collections $S_1,\dots,S_{n-1}$ of subsets
$S_i\subset \{1,\dots,n\}$ such that $\# S_i=i$ and
$S_i\subset S_{i+1}\cup\{i+1\}$.
\end{prop}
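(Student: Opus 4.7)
The three parts of the proposition are largely independent; my plan is to handle them in order, all exploiting the PBW-filtration/degeneration formalism already set up.

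For part (1), I would construct a flat family $\mathcal{F}$ over $\bC[t]$ interpolating $\Fl_n$ and $\Fl_n^a$. Equipping $U(\fn^-)$ with its standard filtration and forming the Rees module $M_{\om_d}=\bigoplus_k U(\fn^-)_k v_{\om_d}\cdot t^k$ yields a free $\bC[t]$-module whose generic fiber is $V_{\om_d}$ and whose special fiber is $V^a_{\om_d}$. Define $\mathcal{F}$ as the closure of the relative $G$-orbit of the (deformed) highest weight line inside $\prod_d \bP(M_{\om_d}/tM_{\om_d})$ over $\bC[t]$. Flatness then reduces to the observation that all fibers embed in the same projective product with the same Hilbert polynomial, since $V_{\om_d}$ and $V^a_{\om_d}$ have equal graded dimensions.

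For part (2), set $X_n=\{(V_d)\in\prod_d Gr(d,n):\ pr_{d+1}V_d\subset V_{d+1},\ d=1,\ld,n-2\}$. I would first verify $\Fl^a_n\subset X_n$: via the embedding $\imath_\la$ for regular $\la$ and Lemma~\ref{ij}, the $d$-th component of the $(N^-)^a$-orbit of the highest weight line lies in $Gr(d,n)$ realized inside $W^{(d)}$; since $f_{i,j}$ acts nontrivially on $W^{(d)}$ only for $i\le d\le j$ (sending $w_i\mapsto w_{j+1}$), the relation $pr_{d+1}V_d\subset V_{d+1}$ is a direct computation on the orbit and hence on its closure. For the reverse inclusion, I would show that $X_n$ is irreducible of dimension $\binom{n}{2}$ by displaying it inductively as a tower: the projection $X_n\to X_{n-1}$ forgetting $V_{n-1}$ has fibers equal to the Schubert variety of $(n-1)$-planes containing $pr_{n-1}V_{n-2}$. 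Since $\Fl^a_n$ is a closed irreducible subvariety of $X_n$ of the same dimension, the two must coincide.

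For part (3), I would analyze the $T$-fixed locus of $X_n$: it consists of tuples of coordinate subspaces $(\spa(S_d))_{d=1}^{n-1}$, and the containment $pr_{d+1}\spa(S_d)\subset\spa(S_{d+1})$ translates to $S_d\setminus\{d+1\}\subset S_{d+1}$, i.e.\ $S_d\subset S_{d+1}\cup\{d+1\}$. Applying the Bialynicki--Birula decomposition for a generic one-parameter subgroup of the extended torus produces affine attracting cells; matching the attractor of $(p(S_d))$ with $\Fl^a_n\cap\prod_i C(S_i)$, using the explicit form of the $G^a$-cells $C(S)$ from Example~\ref{Gr}, yields the stated decomposition. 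The main obstacle I anticipate is the reverse inclusion in part (2): establishing the irreducibility and correct dimension of $X_n$ without circularly invoking the desingularization $R_n$ from Section~2. The route above requires controlling when $\dim pr_{d+1}V_d$ jumps---precisely the phenomenon that the successive $\bP^1$-fibrations in Section~2 are designed to resolve.
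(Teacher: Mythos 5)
First, a point of comparison: the paper does not actually prove Proposition \ref{prop}; it is quoted from the earlier papers [Fe1], [Fe2] (cited as \cite{Fe3}, \cite{Fe4}), where the proofs run through the PBW bases and Cartan-component results of Feigin--Fourier--Littelmann and an explicit analysis of the $G^a$-orbits. Measured against that, your blind attempt has the right objects in play but three genuine gaps. In part (1), the step ``flatness reduces to the observation that all fibers \ldots have the same Hilbert polynomial, since $V_{\om_d}$ and $V^a_{\om_d}$ have equal graded dimensions'' is a non sequitur: equality of $\dim V_{\om_d}$ and $\dim V^a_{\om_d}$ only says the ambient product of projective spaces is unchanged; it says nothing about the Hilbert polynomials of the embedded fibers. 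What is true for free is that the closure of the family over the punctured line is flat over $\mathbb{A}^1$ (torsion-freeness over a smooth curve), and its special fiber is stable under the degenerate group and contains the highest weight line, hence contains $\Fl^a_n$. The real content is the reverse inclusion with reduced structure, i.e. identifying the special fiber of the Rees-algebra degeneration of the multihomogeneous coordinate ring $\bigoplus_\la V_\la^*$ with $\bigoplus_\la (V^a_\la)^*$ and with the coordinate ring of the orbit closure; this requires the FFL basis/Cartan-component theorems, which your sketch never invokes.

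In part (2) the proposed induction is broken as stated: forgetting $V_{n-1}$ does not map $X_n$ onto $X_{n-1}$ (the remaining subspaces live in $\bC^n$, not $\bC^{n-1}$), and the fibers are $\bP^1$ only where $pr_{n-1}$ is injective on $V_{n-2}$, jumping to $\bP^2$ otherwise, so neither irreducibility nor $\dim X_n=n(n-1)/2$ follows; you flag this yourself, but without it the argument ``closed irreducible subvariety of the same dimension'' never gets off the ground. (The known routes are Feigin's direct analysis of the big cell together with the degenerate Pl\"ucker relations, or the quiver-Grassmannian description of \cite{CFR}, where irreducibility and dimension are theorems, not formalities.) In part (3), the Bialynicki--Birula theorem produces affine attracting cells only on a \emph{smooth} variety; $\Fl^a_n=X_n$ is singular, so the attracting sets of a generic one-parameter subgroup are merely locally closed, and identifying them with $\Fl^a_n\cap\prod_i C(S_i)$ is exactly the point that needs proof. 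The established argument instead uses the $G^a$-orbit structure and constructs explicit coordinates on each intersection, in the same spirit as Theorem \ref{cells} does for $R_n$ in this paper. Your computation of the $T$-fixed points and the translation of $pr_{d+1}\spa(S_d)\subset\spa(S_{d+1})$ into $S_d\subset S_{d+1}\cup\{d+1\}$ is correct, but by itself it only locates the candidate labels for the cells.
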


There is an analogue of Proposition \ref{prop} for the degenerate partial flag varieties.
First we note that $\Fl^a_\la\simeq \Fl^a_\mu$ if and only if $(\la,\om_d)>0$ is equivalent
to $(\mu,\om_d)>0$ for any $d$. Therefore, it suffices to consider the weights
$\la=\om_{d_1}+\dots +\om_{d_k}$ with $1\le d_1<\dots <d_k<n$ and the corresponding
degenerate flag varieties $\Fl^a_\la$, which we denote by $\Fl^a_{(d_1,\dots,d_k)}$, or simply by
$\Fl^a_\bd$, where $\bd=(d_1,\dots,d_k)$. We recall
that the classical analogues $\Fl_\bd$ are isomorphic to the
partial flag varieties, i.e. to the varieties consisting of collections of subspaces $V_1,\dots,V_k$
such that $\dim V_i=d_i$ and $V_i\subset V_{i+1}$. For $1\le p\le q<n$ we define the operators $pr_{p,q}:W\to W$
via the formula
\[
pr_{p,q}(\sum_{j=1}^n c_j w_j)=\sum_{j< p} c_jw_j + \sum_{j\ge q} c_jw_j.
\]
Then the following proposition holds:
\begin{prop}\label{propgen}
$1)$.\ The degenerate partial flag varieties $\Fl_\bd^a$ are flat degenerations of the
classical partial flag  varieties $\Fl_\bd$.\\
$2)$.\ The variety $\Fl^a_\bd$ can be realized inside the product of Grassmannians
$\prod_{i=1}^k Gr(d_i,n)$ as a
subvariety of collections $(V_{d_i})_{i=1}^k$ satisfying:
\[
pr_{d_i+1,d_{i+1}} V_{d_i}\subset V_{d_{i+1}},\ i=1,\dots,k-1.
\]
$3)$.\ The variety $\Fl_\bd^a$ has a cell decomposition
$$\bigsqcup_{S_1,\dots,S_k} \left(\Fl_\bd^a\cap \prod_{i=1}^{k} C(S_i)\right),$$
where the disjoint union is taken over the collections $S_1,\dots,S_k$ of subsets
$S_i\subset \{1,\dots,n\}$ such that $\# S_i=d_i$ and
$S_i\subset S_{i+1}\cup\{d_i+1,\dots,d_{i+1}\}$.
\end{prop}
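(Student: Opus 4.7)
The plan is to deduce the proposition from its complete-flag counterpart (Proposition~\ref{prop}) via the forgetful projection
$$\pi_\bd:\prod_{d=1}^{n-1}Gr(d,n)\to \prod_{i=1}^{k}Gr(d_i,n),\qquad (V_d)\mapsto (V_{d_i}),$$
so that the partial-flag geometry is extracted from the complete one by simply forgetting the intermediate subspaces. Denote by $X_\bd$ the closed subvariety of $\prod_i Gr(d_i,n)$ cut out by the conditions in part~2). I would first establish part~2) by a two-sided inclusion. The forward direction is formal: the projections $pr_{d_i+1}, pr_{d_i+2},\ldots, pr_{d_{i+1}}$ commute on $W$ and together annihilate precisely the coordinates indexed by $\{d_i+1,\ldots,d_{i+1}\}$, so iterating the complete-flag conditions $pr_{d+1}V_d\subset V_{d+1}$ yields $pr_{d_i+1,d_{i+1}}V_{d_i}\subset V_{d_{i+1}}$. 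Moreover, the $G^a$-equivariant surjection $V^a_{\la_{reg}}\twoheadrightarrow V^a_\la$ (for $\la=\om_{d_1}+\cdots+\om_{d_k}$ and $\la_{reg}$ regular) sends highest-weight line onto highest-weight line, so the image $\pi_\bd(\Fl^a_n)$ equals $\Fl^a_\bd$, giving $\Fl^a_\bd\subset X_\bd$. For the reverse inclusion I would lift each $(V_{d_i})\in X_\bd$ to a complete flag by choosing intermediate $V_d$ for $d_i<d<d_{i+1}$ inductively.

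Part~3) then follows by restricting to $\Fl^a_\bd=X_\bd$ the cell decomposition of $\prod_i Gr(d_i,n)$ coming from Example~\ref{Gr}. A product of $T$-fixed points $(p(S_1),\ldots,p(S_k))$ with $\#S_i=d_i$ lies in $X_\bd$ if and only if $pr_{d_i+1,d_{i+1}}(p(S_i))\subset p(S_{i+1})$ for every $i$, which translates into the combinatorial constraint $S_i\setminus\{d_i+1,\ldots,d_{i+1}\}\subset S_{i+1}$, equivalently $S_i\subset S_{i+1}\cup\{d_i+1,\ldots,d_{i+1}\}$. Since $\Fl^a_\bd$ is $G^a$-stable, every open cell of the ambient product it meets is contained in it entirely, giving the claimed decomposition. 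Part~1) is then deduced from the $\bC^*$-action induced by the PBW grading: this action lifts to the Pl\"ucker embedding of $\prod_i Gr(d_i,n)$ and deforms the Pl\"ucker-type equations of the classical $\Fl_\bd$ into those of $X_\bd=\Fl^a_\bd$, producing a flat one-parameter family with generic fibre $\Fl_\bd$ and special fibre $\Fl^a_\bd$; constancy of the Hilbert polynomial is witnessed by the matching cell decompositions of $\Fl_\bd$ (Schubert cells) and $\Fl^a_\bd$ (part~3)).

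The main obstacle is the lifting step in part~2): given $(V_{d_i})\in X_\bd$, constructing the intermediate subspaces $V_d$ so that \emph{all} consecutive projection conditions $pr_{d+1}V_d\subset V_{d+1}$ hold simultaneously, not merely in pairs, requires a careful tracking of the kernels of consecutive projections. If this combinatorial construction proves awkward, a cleaner alternative is to establish part~3) directly for the larger variety $X_\bd$ (which only requires the already-proved inclusion $\Fl^a_\bd\subset X_\bd$), observe that $\Fl^a_\bd$ and $X_\bd$ then share identical cell decompositions, and invoke irreducibility of $\Fl^a_\bd$ as a $G^a$-orbit closure to conclude $X_\bd=\Fl^a_\bd$.
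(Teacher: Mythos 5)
This proposition is not proved in the paper at all: like Proposition \ref{prop}, it is recalled from \cite{Fe3}, \cite{Fe4}, so there is no internal proof to match your argument against; it has to stand on its own, and as written it has several genuine gaps. The most serious one is in part 3): the ambient pieces $\prod_{i=1}^k C(S_i)$ are products of $G^a$-orbits, not single orbits of the diagonal $G^a$-action, so the claim that ``since $\Fl^a_\bd$ is $G^a$-stable, every open cell of the ambient product it meets is contained in it entirely'' is false — already for complete flags the piece $\Fl^a_n\cap\prod_i C(S_i)$ is in general a proper subset of $\prod_i C(S_i)$ (compare dimensions: the product of big cells has dimension $\sum_i \dim C(S_i)$, which exceeds $\dim\Fl^a_n$). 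Consequently your argument only identifies which products of $T$-fixed points lie in the incidence variety; it neither shows that the nonempty intersections $\Fl^a_\bd\cap\prod_i C(S_i)$ are affine cells (the actual content of part 3, which requires an explicit coordinate or Bialynicki--Birula-type argument, as in the paper's Theorem \ref{cells} for $R_n$) nor that empty ones correspond exactly to non-admissible collections. A second error: there is no $G^a$-equivariant surjection $V^a_{\la_{reg}}\twoheadrightarrow V^a_\la$ — any $\g^a$-module map would have to send the cyclic vector $v_{\la_{reg}}$ to a vector of weight $\la_{reg}$ in $V^a_\la$, hence to zero. The correct mechanism for realizing $\Fl^a_\bd$ inside $\prod_i Gr(d_i,n)$ and for the equality $\pi_\bd(\Fl^a_n)=\Fl^a_\bd$ is the embedding of $V^a_\la$ as the Cartan component of $\bigotimes_i V^a_{\om_{d_i}}$ (from \cite{FFoL1}) combined with the Segre embedding and $G^a$-equivariance of the forgetful map; your lifting step for the reverse inclusion $X_\bd\subset\pi_\bd(\Fl^a_n)$ is, by contrast, genuinely fixable (work downward from $V_{d_{i+1}}$, choosing each intermediate $V_d$ of dimension $d$ inside $pr_{d+1}^{-1}(V_{d+1})$ and containing $pr_d\cdots pr_{d_i+1}V_{d_i}$; a dimension count shows this is always possible), so the obstacle you flag is not the real one.

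Part 1) is also not established by your sketch: matching cell decompositions control topological data (Euler characteristics, Betti numbers), not the Hilbert polynomial of an embedded projective scheme, so ``constancy of the Hilbert polynomial is witnessed by the matching cell decompositions'' does not follow. The flat degeneration requires the representation-theoretic input identifying the special fibre of the Rees/initial-ideal construction for the PBW filtration with $\Fl^a_\bd$ (the PBW bases and degenerate Pl\"ucker relations of \cite{FFoL1}, \cite{Fe3}), which is exactly why the paper cites those references rather than reproving the statement. Finally, a small point in your favour: your reading of $pr_{d_i+1,d_{i+1}}$ as killing the coordinates $d_i+1,\dots,d_{i+1}$ is the intended one (consistent with \eqref{cd1} and the conditions defining $Y_\bd$), even though the displayed formula for $pr_{p,q}$ in the paper literally keeps the $q$-th coordinate.
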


\begin{rem}
The image of the embedding $\Fl^a_\bd\subset \prod_{i=1}^k Gr(d_i,n)$ can be also described
in terms of the degenerate Pl\" ucker relations (\cite{Fe3}), similar to the classical ones
(\cite{Fu}).
\end{rem}

\section{Desingularization}\label{decomp}
\subsection{Definition}
We define a desingularization $R_n$ of the complete degenerate flag varieties $\Fl^a_n$ as follows.
Let $W_{i,j}\subset W$ be the linear span of the vectors $w_1,\dots,w_i,w_{j+1},\dots,w_n$.
\begin{dfn}\label{Rn}
The variety $R_n$ consists of collections $\bV$ of subspaces $V_{i,j}\subset W$,
$1\le i\le j\le n-1$ satisfying the following properties:
\begin{enumerate}
\item $\dim V_{i,j}=i$,\label{1}\\
\item $V_{i,j}\subset W_{i,j}$,\label{2}\\
\item $pr_{j+1} V_{i,j}\subset V_{i,j+1}\subset V_{i+1,j+1}$ for all $1\le i\le j\le n-2$.\label{3}
\end{enumerate}
\end{dfn}

\begin{rem}\label{oplus}
Since the subspace $V_{i,j}$ is embedded into $W_{i,j}$, the condition
$pr_{j+1} V_{i,j}\subset V_{i,j+1}$ is equivalent to the condition
\[
V_{i,j}\subset V_{i,j+1}\oplus\bC w_{j+1}.
\]
\end{rem}

\begin{rem}
In what follows we often identify pairs $(i,j)$ with positive roots of $\msl_n$,
$(i,j)\to \al_{i,j}$. We also sometimes consider a space $V_{i,j}$ as being attached to
the root $\al_{i,j}$ and we write $V_{\al_{i,j}}$ for $V_{i,j}$.
\end{rem}

We note that $R_n$ is naturally embedded into the product of Grassmannians
\[
R_n\hk \prod_{1\le i\le j\le n-1} Gr(i,W_{i,j}),
\]
where $Gr(i,W_{i,j})$ is the Grassmannian of $i$-dimensional subspaces in $W_{i,j}$.
Define the map $\pi_n:R_n\to\prod_{i=1}^{n-1} Gr(i,n)$ by the formula
\begin{equation}\label{pi}
\bV=(V_{i,j})_{1\le i\le j\le n-1}\mapsto (V_{1,1},\dots,V_{n-1,n-1}).
\end{equation}

\begin{prop}\label{birat}
The image of $\pi_n$ is equal to $\Fl^a_n$.
The variety $R_n$ is smooth and the map $\pi_n:R_n\to \Fl^a_n$ is a birational isomorphism.
\end{prop}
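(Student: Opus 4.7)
The plan is to prove the three assertions in sequence: containment of the image of $\pi_n$ in $\Fl^a_n$, smoothness of $R_n$ via an explicit structure as a tower of $\bP^1$-bundles, and birationality by constructing an inverse of $\pi_n$ on a dense open subset.

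For the first part, Proposition~\ref{prop}(2) identifies $\Fl^a_n\subset\prod_{d}Gr(d,n)$ as the locus where $pr_{d+1}V_d\subset V_{d+1}$. Specializing Definition~\ref{Rn}(iii) to $(i,j)=(d,d)$ gives $pr_{d+1}V_{d,d}\subset V_{d,d+1}\subset V_{d+1,d+1}$, so $\pi_n(R_n)\subset\Fl^a_n$.

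For smoothness, I process the pairs $(i,j)$ in the order: $i$ ascending from $1$ to $n-1$, and for each fixed $i$, $j$ descending from $n-1$ down to $i$. I claim that at step $(i,j)$, given all earlier choices, the admissible $V_{i,j}$ form a $\bP^1$. The constraints on $V_{i,j}$ involving only previously fixed data are: containment of $V_{i-1,j}$ (with the convention $V_{0,j}:=0$), and containment of $V_{i,j}$ in the $(i+1)$-dimensional outer space $W_{i,n-1}$ when $j=n-1$, or $V_{i,j+1}\oplus\bC w_{j+1}$ when $j<n-1$ (using Remark~\ref{oplus}). A short induction using the prior inclusions $V_{i-1,j}\subset V_{i-1,j+1}\oplus\bC w_{j+1}$ and $V_{i-1,j+1}\subset V_{i,j+1}$ confirms that $V_{i-1,j}$ lies in the outer $(i+1)$-dimensional space. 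The space of $i$-dimensional subspaces of an $(i+1)$-dimensional space containing a fixed $(i-1)$-dimensional subspace is $\bP^1$. Therefore $R_n$ is a successive $\bP^1$-bundle, hence smooth and projective, of dimension $\binom{n}{2}=\dim\Fl^a_n$.

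For birationality, I exhibit a section of $\pi_n$ over the open $(N^-)^a$-orbit in $\Fl^a_n$. On this orbit the flag $(V_{d,d})$ is generic enough that each iterated projection $pr_j\circ\cdots\circ pr_{i+1}$ restricts injectively to $V_{i,i}$. Setting $V_{i,j}:=pr_j\cdots pr_{i+1}(V_{i,i})$ then produces a well-defined element of $R_n$: the inclusion $V_{i,j}\subset W_{i,j}$ and the row relation $pr_{j+1}V_{i,j}\subset V_{i,j+1}$ are built in, while the column relation $V_{i,j}\subset V_{i+1,j}$ reduces, by commutativity of the projection operators, to the input condition $pr_{i+1}V_{i,i}\subset V_{i+1,i+1}$ coming from $\Fl^a_n$. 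Since $R_n$ is proper, its image in $\Fl^a_n$ is closed, and containing the open orbit forces it to equal $\Fl^a_n$; the section shows $\pi_n$ is birational.

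The main subtlety is the $\bP^1$-tower verification: one must carefully confirm that the previously imposed constraints already force $V_{i-1,j}$ into the $(i+1)$-dimensional outer space, so that the admissible $V_{i,j}$ genuinely parametrize a $\bP^1$ and not a higher-dimensional incidence. Once the ordering is fixed, the rest is bookkeeping, and the dimension count $\binom{n}{2}=\dim\Fl^a_n$ drops out automatically.
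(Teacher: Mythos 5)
Your proposal is correct, and its core is the same as the paper's: containment of the image via Definition~\ref{Rn}(iii) specialized to the diagonal, smoothness from a step-by-step $\bP^1$-tower, and a generic inverse of $\pi_n$ built from iterated projections $pr_j\cdots pr_{i+1}$. The differences are organizational but worth noting. Your ordering of the pairs (row by row, $j$ descending) differs from the paper's ordering by decreasing $j-i$; both are admissible, since in either order the constraints active at step $(i,j)$ are exactly $V_{i-1,j}\subset V_{i,j}\subset V_{i,j+1}\oplus\bC w_{j+1}$ (or $\subset W_{i,n-1}$ when $j=n-1$), and the verification that $V_{i-1,j}$ sits inside the outer space uses the same two previously imposed inclusions, so this is just different bookkeeping. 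For the last assertion, the paper proves surjectivity by explicitly completing an arbitrary point $(V_i)$ of $\Fl^a_n$ to a point of $R_n$ (a case-by-case inductive construction), and then gets birationality by exhibiting an open $U\subset\Fl^a_n$ over which the fiber is forced to be a single point, since any preimage satisfies $V_{i,j}\supset pr_j\cdots pr_{i+1}V_{i,i}$ with both sides $i$-dimensional; you instead get surjectivity from properness of $R_n$ plus a section over the dense $(N^-)^a$-orbit, and birationality from that section. Your surjectivity argument is slicker, but be aware that a section over a dense open set alone does not give birationality (a $\bP^1$-bundle has sections); you do need, and do record, $\dim R_n=\binom{n}{2}=\dim\Fl^a_n$ together with irreducibility of $R_n$ so that the section is dominant — or, more directly, you could argue as the paper does that the fiber over your open set is a single point. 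Finally, your unproved claim that the iterated projections are injective on $V_{i,i}$ along the open orbit is true and easy to check (on the orbit $V_{d,d}$ is spanned by vectors $w_k+\sum_{j\ge d}c_{k,j}w_{j+1}$, $k\le d$), and it matches the paper's description of its open set $U$.
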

\begin{proof}
We note that if $\bV\in R_n$ then
\[
pr_{i+1} V_{i,i}\subset V_{i,i+1}\subset V_{i+1,i+1}
\]
and thus $\pi_n(R_n)\subset \Fl^a_n$. Now given an element $(V_1,\dots,V_{n-1})\in\Fl_n^a$,
we define a collection $\bV$ via the following inductive procedure: $V_{i,i}=V_i$ and
\[
V_{i,j+1}=
\begin{cases}
pr_{j+1} V_{i,j},\ \text{ if } \dim pr_{j+1} V_{i,j}=i,\\
pr_{j+1} V_{i,j}\oplus \bC w_m,\ \text{ if } \dim pr_{j+1} V_{i,j}=i-1,
\end{cases}
\]
where $m\in\{1,\dots,i\}$ is the minimal number such that $w_m\notin pr_{j+1} V_{i,j}$.
Then it is easy to see that $\bV=(V_{i,j})$ belongs to $R_n$. Hence $\pi_n$ surjects $R_n$
to $\Fl_n^a$.

Now we show that $R_n$ can be viewed as a tower of $\bP^1$-fibrations. Let us order
all positive roots of $\msl_n$ as follows:
\[
\beta_1=\al_{1,n-1}, \beta_2=\al_{1,n-2}, \beta_3=\al_{2,n-1}, \beta_4=\al_{1,n-3}, \beta_5=\al_{2,n-2},\dots.
\]
Let $R_n(k)$, $k=1,\dots,n(n-1)/2$ be the variety of collections $(V_{\beta_l})_{l=1,\dots,k}$, satisfying
properties \eqref{1}, \eqref{2}, \eqref{3} from Definition \ref{Rn}
(conditions \eqref{1}, \eqref{2} and \eqref{3} are applied only to those $V_{i,j}$ which show up in $R_n(k)$,
i.e. for $\beta_l=\al_{i,j}$ one has $l\le k$).
Then $R_n(n(n-1)/2)=R_n$ and there exist obvious projections $R_n(k)\to R_n(k-1)$.
We prove that for all $k\ge 1$  the projections $R_n(k)\to R_n(k-1)$ are fibrations
with  fibers $\bP^1$ (we set $R_n(0)=pt$).

For $k=1$ we have $\beta_1=\al_{1,n-1}$ and $V_{\beta_1}$ is a one-dimensional space embedded into
two-dimensional space $\mathrm{span}(w_1,w_{n-1})$. Therefore $R_n(1)\simeq \bP^1$. Now fix some $k$.
Let $\beta_k=\al_{i,j}$. First, let $i\ne 1$, $j\ne n-1$. Then the $i$-dimensional subspace $V_{i,j}$
has to satisfy the conditions
\begin{equation}\label{fibr}
V_{i-1,j}\subset V_{i,j}\subset V_{i,j+1}\oplus \bC w_{j+1}
\end{equation}
(see Remark \ref{oplus}). Suppose we have fixed all subspaces $V_{\beta_l}$ with $l<k$
(i.e. a point of $R_n(k-1)$). Since
$V_{i-1,j}$ and $V_{i,j+1}$ are already fixed, conditions \eqref{fibr} say that the possible choices of
$V_{i,j}$ are labeled by points of
\[
\bP^1\simeq \bP\left(\frac{V_{i,j+1}\oplus \bC w_{j+1}}{V_{i-1,j}}\right).
\]
Second, let $i=1$. Then we have to fix a one-dimensional subspace $V_{1,j}$ living in a two-dimensional space
$V_{1,j+1}\oplus\bC w_{j+1}$. This gives us again a $\bP^1$-fibration.
Finally, let $j=n-1$. Then we need to fix an $i$-dimensional subspace $V_{i,n-1}$ subject to the
conditions
\[
V_{i-1,n-1}\subset V_{i,n-1}\subset \spa(w_1,\dots,w_i,w_n),
\]
which again produces $\bP^1$.

It remains to prove that the map $\pi_n:R_n\to\Fl^a_n$ is a birational isomorphism.
Consider the subvariety $U\subset \Fl^a_n$ consisting of all collections of subspaces $(V_i)_{i=1}^{n-1}$
such that $\dim V_i=i$ and
\[
\dim pr_{i+1}\dots pr_{n-1} V_i=i
\]
(i.e. the composition of the projections as above has no kernel on $V_i$).
First note that these conditions cut out an open subvariety in $\Fl^a_n$ (in fact it is easy to see that
$U$ is an affine cell). In addition, the preimage $\pi_n^{-1} (V_i)_{i=1}^{n-1}$ consists of a single point,
since
\[
V_{i,j}\subset pr_{j}\dots pr_{i+1} V_{i,i}
\]
and both spaces are $i$-dimensional.
\end{proof}

\begin{rem}
As we have seen in the proof of Proposition \ref{birat}, the variety $R_n$ can be constructed as a tower of
successive $\bP^1$-fibrations $\rho_k:R_n(k)\to R_n(k-1)$. We can make this statement a bit stronger.
Let us write $\rho_k=\rho_{i,j}$ if $\beta_k=\al_{i,j}$.
Then it is easy to see that the maps
$\rho_{i,j}$ with fixed $j-i$ "commute", i.e. for each $m=n-1,\dots,1$
there exist maps
$$\bar\rho_m: R_n(m(m+1)/2)\to R_n((m-1)m/2) ,$$ which are the $(\bP^1)^m$ fibrations, and
$\bar\rho_m=\prod_{i=1}^m\rho_{i,i+n-m-1}$.
\end{rem}

Denote by $\xi_{i,j}:R_n\to Gr(i,W_{i,j})$ the projection given by $\bV\mapsto V_{i,j}$.
\begin{lem}\label{ind}
For any $k=0,\dots,n-2$ the image of the map $\prod_{j-i=k} \xi_{i,j}$ is isomorphic to $\Fl^a_{n-k}$.
\end{lem}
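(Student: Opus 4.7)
The plan is to identify the target $\prod_{i=1}^{n-1-k} Gr(i,W_{i,i+k})$ with $\prod_{i=1}^{n-1-k} Gr(i,n-k)$ explicitly, and then to show that, under this identification, the image of $\prod_{j-i=k}\xi_{i,j}$ coincides with the subvariety cutting out $\Fl^a_{n-k}$ given in Proposition~\ref{prop}(2). First, for each $i=1,\ldots,n-1-k$, I would fix the isomorphism $\phi_i\colon W_{i,i+k}\xrightarrow{\sim}\widetilde W\simeq \bC^{n-k}$ (with basis $\tilde w_1,\ldots,\tilde w_{n-k}$) defined by $\phi_i(w_s)=\tilde w_s$ for $1\le s\le i$ and $\phi_i(w_s)=\tilde w_{s-k}$ for $i+k+1\le s\le n$. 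A direct computation in these bases then shows that $pr_{i+k+1}$ carries $W_{i,i+k}$ into $W_{i+1,i+k+1}$ and that the intertwining $\phi_{i+1}\circ pr_{i+k+1}=pr_{i+1}\circ\phi_i$ holds as maps $W_{i,i+k}\to\widetilde W$.

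Next, for the containment, take $\bV\in R_n$ and set $\widetilde V_i:=\phi_i(V_{i,i+k})$. Condition \eqref{3} of Definition~\ref{Rn} yields $pr_{i+k+1}V_{i,i+k}\subset V_{i,i+k+1}\subset V_{i+1,i+k+1}$, and applying $\phi_{i+1}$ together with the intertwining relation above transforms this into $pr_{i+1}\widetilde V_i\subset\widetilde V_{i+1}$ for $i=1,\ldots,n-2-k$. This is precisely the defining relation for $\Fl^a_{n-k}\subset\prod_{i=1}^{n-1-k} Gr(i,n-k)$ recorded in Proposition~\ref{prop}(2), so the image lies in this copy of $\Fl^a_{n-k}$.

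For the reverse inclusion, any $(\widetilde V_i)\in\Fl^a_{n-k}$ should be lifted to some $\bV\in R_n$ with $\phi_i(V_{i,i+k})=\widetilde V_i$. Setting $V_{i,i+k}:=\phi_i^{-1}(\widetilde V_i)$, the remaining entries $V_{i,j}$ with $j-i\ne k$ are filled in by running the tower of $\bP^1$-fibrations from the proof of Proposition~\ref{birat}: diagonals with $j-i>k$ are populated inductively by $V_{i,j+1}:=pr_{j+1}V_{i,j}$, augmented by $\bC w_m$ for the smallest admissible index $m$ when the projection drops rank (exactly as in the explicit formula of that proof); diagonals with $j-i<k$ are handled by the analogous backward procedure, whose fibers along the tower are non-empty. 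Since $R_n$ is projective, the image of $\prod_{j-i=k}\xi_{i,j}$ is closed, so combined with the set-theoretic surjectivity just established it coincides with the reduced irreducible variety $\Fl^a_{n-k}$. The main obstacle in this argument is the bookkeeping in the lifting step: one must verify that the filled-in entries never violate an $R_n$-constraint across adjacent diagonals or within a single diagonal. Once the intertwining $\phi_{i+1}\circ pr_{i+k+1}=pr_{i+1}\circ\phi_i$ is in place, however, this reduces to following the ordering prescribed in Proposition~\ref{birat} and requires no genuinely new idea.
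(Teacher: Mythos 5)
Your maps $\phi_i$ are exactly the paper's $A_{i,j}$ for $j-i=k$, and the intertwining relation together with the forward containment is precisely the content that the paper's one-line proof leaves to the reader, so up to that point you follow the paper's route, only with more detail. The genuine weak spot is the lifting step. The upward filling ``$V_{i,j+1}:=pr_{j+1}V_{i,j}$, augmented by $\bC w_m$ for the smallest $m\in\{1,\dots,i\}$ when the rank drops'' does not in general produce a point of $R_n$: the appended vector $w_m$ need not lie in $V_{i+1,j+1}$, so the vertical constraint in condition \eqref{3} of Definition~\ref{Rn} can fail, and sometimes no $m\le i$ works at all. Concretely, take $n=4$, $k=1$, $V_{1,2}=\bC w_3$, $V_{2,3}=\spa(w_2,w_4)$ (this pair does satisfy the transported condition, since $pr_3V_{1,2}=0$); any admissible $V_{1,3}$ must lie in $W_{1,3}\cap V_{2,3}=\bC w_4$, whereas your recipe outputs $V_{1,3}=\bC w_1\not\subset V_{2,3}$. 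So the ``set-theoretic surjectivity just established'' is not established at such points. (The same subtlety is present in the explicit formula in the proof of Proposition~\ref{birat}, which you are quoting; it too only works off the degenerate locus.)

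The conclusion is correct and the repair is short, essentially using the closedness you already invoked: on the open locus of points of $\Fl^a_{n-k}$ where no rank drop occurs along the row, the naive rule $V_{i,j+1}=pr_{j+1}V_{i,j}$ does satisfy all constraints (the base case of the vertical inclusions is exactly the transported condition $pr_{i+k+1}V_{i,i+k}\subset V_{i+1,i+k+1}$, and the inductive step is $pr_{j+1}V_{i,j}\subset pr_{j+1}V_{i+1,j}\subset V_{i+1,j+1}$), the diagonals with $j-i<k$ are then filled by the $\bP^1$-fibrations as you say; since the image is closed and $\Fl^a_{n-k}$ is irreducible, the image is all of $\Fl^a_{n-k}$. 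Alternatively, and avoiding any pointwise recipe: the forgetful map from $R_n$ to the stage of the tower retaining the diagonals $j-i\ge k$ is surjective, being a composition of $\bP^1$-fibrations; that stage is identified with $R_{n-k}$ by the very same maps $A_{i,j}$ (this is Corollary~\ref{indcor}, whose proof uses only these maps), the $k$-th diagonal becoming the main diagonal; hence the image of $\prod_{j-i=k}\xi_{i,j}$ equals the image of $\pi_{n-k}$, which is $\Fl^a_{n-k}$ by Proposition~\ref{birat}.
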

\begin{proof}
Recall that $V_{i,j}\subset W_{i,j}$. Consider an isomorphism
$$A_{i,j}:W_{i,j}\to\spa(w_1,\dots,w_{n-j+i})$$ defined by
\begin{multline*}
A_{i,j}(c_1w_1+\dots +c_iw_i+c_{j+1}w_{j+1}+\dots +c_nw_n)=\\
c_1w_1+\dots+c_iw_i+c_{j+1}w_{i+1}+\dots + c_nw_{n-j+i}.
\end{multline*}
Then it is easy to see that this map induces the isomorphism stated in our lemma.
\end{proof}

\begin{cor}
We have an embedding $R_n\hk\prod_{k=1}^{n-1} \Fl^a_k$.
\end{cor}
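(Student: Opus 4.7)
The plan is to regroup the coordinate projections $\xi_{i,j}\colon R_n\to Gr(i,W_{i,j})$ by anti-diagonals $j-i=\mathrm{const}$ and apply Lemma \ref{ind} to each group.

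First I would invoke the closed embedding $\prod_{1\le i\le j\le n-1}\xi_{i,j}\colon R_n\hk\prod_{1\le i\le j\le n-1}Gr(i,W_{i,j})$ recorded right after Definition \ref{Rn}. Partitioning the index set by $k=j-i\in\{0,1,\dots,n-2\}$ decomposes this embedding as a product over $k$ of the maps $\prod_{j-i=k}\xi_{i,j}$, and by Lemma \ref{ind} the image of the $k$-th such factor is precisely the closed subvariety $\Fl^a_{n-k}\subset\prod_{j-i=k}Gr(i,W_{i,j})$.

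Consequently the embedding of $R_n$ factors through the closed subvariety $\prod_{k=0}^{n-2}\Fl^a_{n-k}$ inside $\prod_{(i,j)}Gr(i,W_{i,j})$; reindexing $m=n-k$ (and, if one wishes, harmlessly appending the one-point variety $\Fl^a_1$) identifies this product with the one in the statement. Since a morphism factoring through a closed subvariety of the target of a closed embedding is itself a closed embedding, we obtain the desired $R_n\hk\prod_k\Fl^a_k$.

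There is no substantive obstacle: the entire content of the corollary is carried by Lemma \ref{ind}, and the only minor point to verify is the matching of the index used in the statement ($k=1,\dots,n-1$) with the anti-diagonal parametrization ($k=0,\dots,n-2$) appearing in the lemma.
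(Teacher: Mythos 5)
Your core argument is exactly the paper's: the corollary is meant to follow immediately from the product embedding $R_n\hk\prod_{1\le i\le j\le n-1}Gr(i,W_{i,j})$ together with Lemma \ref{ind}, applied anti-diagonal by anti-diagonal, and your regrouping by $k=j-i$ is precisely that derivation.

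The one place your write-up slips is the final bookkeeping. Grouping over $k=0,\dots,n-2$ and applying Lemma \ref{ind} lands you in $\prod_{k=0}^{n-2}\Fl^a_{n-k}=\Fl^a_2\times\cdots\times\Fl^a_n$, and this is \emph{not} identified with the displayed product $\prod_{k=1}^{n-1}\Fl^a_k$ by ``harmlessly appending $\Fl^a_1$'': the displayed product lacks the factor $\Fl^a_n$ coming from the diagonal $j=i$, and that factor cannot be dropped, because the diagonal subspaces $V_{i,i}$ are not determined by the off-diagonal ones (each $V_{i,i}$ still varies in a $\bP^1$ given $V_{i-1,i}\subset V_{i,i}\subset V_{i,i+1}\oplus\bC w_{i+1}$; already for $n=2$ one has $R_2\simeq\bP^1$, which does not embed into the point $\Fl^a_1$). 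The resolution is that the statement as printed has an off-by-one in the index: the embedding your argument (and the paper's) actually yields is $R_n\hk\prod_{k=2}^{n}\Fl^a_k$, equivalently into $\prod_{k=1}^{n}\Fl^a_k$ after adjoining the one-point variety $\Fl^a_1$. With that correction your proof is complete and coincides with the intended one.
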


\begin{cor}\label{indcor}
The varieties $R_n(k(k+1)/2)$ and $R_k$ are isomorphic.
\end{cor}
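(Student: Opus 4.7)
My plan is to construct an explicit isomorphism between $R_n(k(k+1)/2)$ and the smaller variety $R_k$, by identifying the data of the subtower in $R_n$ with the defining data of the smaller $R$ (reading the indexing consistently with the $\bP^1$-tower built in the proof of Proposition \ref{birat}) and then transporting the conditions of Definition \ref{Rn} across this identification.

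First, I would identify the relevant index set. Under the ordering $\beta_1,\beta_2,\ldots$ fixed in the proof of Proposition \ref{birat}, the first $k(k+1)/2$ positive roots are exactly the $\al_{i,j}$ lying on the $k$ largest antidiagonals, i.e.\ those with $j-i\ge n-1-k$. The shift $(i,j)\mapsto (i',j'):=(i,\,j-(n-1-k))$ maps this set bijectively onto the index set appearing in the definition of the smaller $R$. The count $k(k+1)/2$ matches the dimension of the resulting subtower, and the slab $W_{i,j}$ has dimension $n-(j-i)$, which depends only on the antidiagonal and agrees with the dimension of the corresponding $W_{i',j'}$ on the target side.

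Next, I would install explicit linear identifications at the level of ambient spaces, modelled on the recipe of Lemma \ref{ind}: for each relevant $(i,j)$ define $\phi_{i,j}\colon W_{i,j}\to W_{i',j'}$ by $w_m\mapsto w_m$ for $m\le i$ and $w_m\mapsto w_{m-(n-1-k)}$ for $m>j$. Each $\phi_{i,j}$ is a linear isomorphism, so it induces an isomorphism of Grassmannians $Gr(i,W_{i,j})\stackrel{\sim}{\to} Gr(i',W_{i',j'})$, and together they furnish an isomorphism between the two ambient products of Grassmannians. The proposed morphism then sends $(V_{i,j})\mapsto (\phi_{i,j}(V_{i,j}))$, with inverse supplied by the inverses $\phi_{i,j}^{-1}$.

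The last step is to check that the three conditions of Definition \ref{Rn} transport correctly. The only subtlety is that those conditions couple the three neighboring subspaces $V_{i,j}$, $V_{i,j+1}$, $V_{i+1,j+1}$, so one must verify that the restrictions of $\phi_{i,j}$, $\phi_{i,j+1}$, $\phi_{i+1,j+1}$ to their pairwise intersections in $W$ coincide, and that $\phi_{i,j+1}$ intertwines $pr_{j+1}$ with $pr_{j'+1}$; both facts are immediate from the explicit formulas for the $\phi$'s. I expect the main obstacle to be not conceptual but rather the bookkeeping: one has to check carefully that when $(i,j)$ lies in the active range, the neighbors $(i,j+1)$ and $(i+1,j+1)$ either also lie in the active range (so a condition is imposed on both sides and the correspondence pairs them up) or else fall outside entirely (in which case neither variety imposes a condition), so that the transported relations exhaust the defining relations of the target.
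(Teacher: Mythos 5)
Your overall strategy is the paper's own: restrict to the subspaces indexed by the first $k$ antidiagonals (those with $j-i\ge n-1-k$) and transport the defining conditions through linear identifications of the slabs, exactly in the spirit of the maps $A_{i,j}$ of Lemma \ref{ind} (your $\phi_{i,j}$ use a uniform shift by $n-1-k$ instead of the antidiagonal-dependent shift by $j-i$, which is if anything the cleaner choice, since it carries $W_{i,j}$ onto the gapped slab of the target rather than onto an initial segment, and makes the compatibility of neighbouring $\phi$'s and the intertwining of the projections immediate).

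However, there is an off-by-one problem sitting exactly in the bookkeeping you yourself identified as the main obstacle, and you assert it away incorrectly. The shift $(i,j)\mapsto(i,j-(n-1-k))$ sends the active index set onto $\{(i',j'):1\le i'\le j'\le k\}$, which is the index set of $R_{k+1}$, not of $R_k$ (whose indices satisfy $j'\le k-1$); likewise $\dim W_{i,j}=n-(j-i)=(k+1)-(j'-i')$, which matches the slabs inside a $(k+1)$-dimensional ambient space, and indeed your own formula sends $w_n\mapsto w_{k+1}$, a vector that does not exist in the ambient space of $R_k$. So your construction proves $R_n(k(k+1)/2)\simeq R_{k+1}$, while your text claims the target is $R_k$ and asserts (falsely) that the index sets and slab dimensions agree with those of $R_k$. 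In fact the statement as printed cannot hold literally: $R_n(k(k+1)/2)$ is a tower of $k(k+1)/2$ many $\bP^1$-fibrations, hence of dimension $k(k+1)/2$, whereas $\dim R_k=k(k-1)/2$; already for $k=1$ one has $R_n(1)\simeq\bP^1$ while $R_1$ is a point. The corollary is an off-by-one misprint (it should read $R_{k+1}$, equivalently $R_n(k(k-1)/2)\simeq R_k$), as its later use to identify $R_n((n-1)(n-2)/2)$ with $R_{n-1}$ in the proof of Theorem \ref{K-1} confirms. So the correct conclusion is: your isomorphism is the right one and matches the paper's intended argument, but you should state and prove the corrected claim with target $R_{k+1}$ rather than silently declaring the mismatched indices to coincide.
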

\begin{proof}
We note that the spaces involved in the construction of $R_n(k(k+1)/2)$ are exactly $V_{i,j}$ with
$j-i\ge n-k-1$. Now the maps $A_{i,j}$ as above induce the desired isomorphism.
\end{proof}

We now consider the case of partial flag varieties $\Fl^a_\bd$ (recall the notation
$\bd=(d_1,\dots,d_k)$).
Let $P_{(d_1,\dots,d_k)}=P_\bd$ be the subset of the set of positive roots of $\msl_n$
corresponding to the radical of the parabolic
subalgebra defined by the simple roots $\al_{d_1},\dots, \al_{d_k}$, i.e.
\[
P_\bd=\{\al_{i,j}:\ \exists\ l \text{ such that } (\al_{i,j},\om_{d_l})>0\}.
\]
We sometimes consider $P_\bd$ as a subset of $\bN^2$ identifying $\al_{i,j}$ with the
pair $(i,j)$.
Let $R_\bd$ be the
image of the map
$$\prod_{(i,j)\in P_\bd}\xi_{i,j}: R_n\to \prod_{(i,j)\in P_\bd} Gr(i,W_{i,j}).$$
More concretely, $R_\bd$ is the
variety of collections $V_{i,j}\subset W$ with $(i,j)\in P_\bd$
satisfying conditions
\begin{enumerate}
\item $\dim V_{i,j}=i$,\\
\item $V_{i,j}\subset W_{i,j}$,\\
\item $pr_{j+1} V_{i,j}\subset V_{i,j+1}$ if  $(i,j),(i,j+1)\in P_\bd$,\\
\item $V_{i,j+1}\subset V_{i+1,j+1}$ if  $(i,j+1),(i+1,j+1)\in P_\bd$
\end{enumerate}
from Definition \ref{Rn}. Obviously, $R_n$ surjects to $R_\bd$ by forgetting all components $V_{i,j}$
but those with $(i,j)\in P_\bd$.
\begin{prop}
For any $\bd$ the variety $R_\bd$ is smooth and a natural map $R_\bd\to \Fl^a_\bd$ defined by
forgetting the off-diagonal
($i\ne j$) subspaces $V_{i,j}$ is a desingularization (a birational isomorphism).
\end{prop}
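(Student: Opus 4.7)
The plan is to follow the strategy of Proposition \ref{birat} almost verbatim: establish smoothness of $R_\bd$ by writing it as a tower of $\bP^1$-fibrations, verify that the forgetful morphism to $\Fl^a_\bd$ is well defined and surjective, and finally show that it is birational.

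For smoothness, I would order the roots of $P_\bd$ by decreasing $j - i$ and, among equal values, by increasing $i$, and let $R_\bd(k)$ consist of the first $k$ components. The key combinatorial observation is that $P_\bd$ is closed under the operations $(i,j) \mapsto (i-1,j)$ (for $i \ge 2$) and $(i,j) \mapsto (i, j+1)$ (for $j \le n - 2$), since $i \le d_l \le j$ forces both $i - 1 \le d_l \le j$ and $i \le d_l \le j + 1$. Consequently, whenever a new $V_{i,j}$ is placed, both of its neighbouring constraints $V_{i-1, j}$ (for $i > 1$) and $V_{i, j+1}$ (for $j < n-1$) have already been built, and the identical case analysis of Proposition \ref{birat} identifies each fiber with $\bP^1$; the edge cases $i = 1$ or $j = n-1$ simply drop one constraint and invoke the ambient $W_{i,j}$, still producing $\bP^1$. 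Hence $R_\bd$ is smooth of dimension $|P_\bd|$.

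For the map to $\Fl^a_\bd$: given $\bV \in R_\bd$, I chain the horizontal containments $V_{d_l, j} \subset V_{d_l, j+1} \oplus \bC w_{j+1}$ for $d_l \le j < d_{l+1}$ and then the vertical chain $V_{d_l, d_{l+1}} \subset V_{d_l+1, d_{l+1}} \subset \cdots \subset V_{d_{l+1}, d_{l+1}}$ to deduce
\[
V_{d_l, d_l} \subset V_{d_{l+1}, d_{l+1}} + \spa(w_{d_l + 1}, \ldots, w_{d_{l+1}}),
\]
which is the defining incidence relation of $\Fl^a_\bd$ from Proposition \ref{propgen}(2). Surjectivity then follows by factoring $R_n \to R_\bd \to \Fl^a_\bd$ through $R_n \to \Fl^a_n \to \Fl^a_\bd$: the first arrow is surjective by Proposition \ref{birat}, and the flag-forgetting morphism $\Fl^a_n \to \Fl^a_\bd$ is surjective as a $G^a$-equivariant map sending the open $G^a$-orbit onto the open $G^a$-orbit (equivalently, as the flat degeneration of the classical surjection $\Fl_n \twoheadrightarrow \Fl_\bd$).

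For birationality, both $R_\bd$ and $\Fl^a_\bd$ have dimension $|P_\bd|$, so the map is generically finite, and it suffices to produce an open dense $U \subset \Fl^a_\bd$ over which the fiber is a single point. I would take $U$ to be the locus where every iterated projection $pr_j \circ \cdots \circ pr_{d_l + 1} V_{d_l}$ has full dimension $d_l$ for $d_l \le j \le n - 1$. On $U$ the row spaces $V_{d_l, j} = pr_j \cdots pr_{d_l + 1} V_{d_l}$ are uniquely forced; the off-diagonal components $V_{i, j}$ with $(i, j) \in P_\bd$ and $i \neq d_l$ for every $l$ are then reconstructed by downward induction on $j$ via the intersection $V_{i, j} = V_{d_{l^*}, j} \cap (V_{i, j+1} \oplus \bC w_{j+1})$ in the interior (and $V_{i, n-1} = V_{d_{l^*}, n-1} \cap W_{i, n-1}$ on the top row), where $l^* = \min\{l : d_l \ge i\}$. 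A codimension count inside the $(d_{l^*}+1)$-dimensional ambient space $V_{d_{l^*}, j+1} \oplus \bC w_{j+1}$, which contains both intersecting subspaces, shows this intersection is at least $i$-dimensional, and $U$ is defined precisely so that generic equality holds.

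I expect the birationality step to be the main obstacle: I must verify that $U$ is nonempty and open, that the intersection formula delivers an $i$-dimensional subspace uniformly on a dense open, and that the reconstructed collection actually satisfies all defining relations of $R_\bd$--in particular the vertical inclusions $V_{i, j} \subset V_{i+1, j}$ which are not directly invoked in the inductive formula. Edge cases such as $d_{l^*} = i + 1$ or $j = n - 1$, and the combinatorics of $P_\bd$ versus its complement in the positive roots of $\msl_n$, will need to be tracked carefully.
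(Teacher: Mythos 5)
Your overall strategy is exactly the one the paper intends: the paper's own proof of this proposition is literally the one-line remark that it is ``very similar to the proof for the complete flag varieties,'' and your tower-of-$\bP^1$-fibrations argument (using that $P_\bd$ is closed under $(i,j)\mapsto(i-1,j)$ and $(i,j)\mapsto(i,j+1)$, so at each step both neighbouring constraints are already in place), the surjectivity argument, and the generic-fibre uniqueness argument are all faithful adaptations of Proposition \ref{birat}.

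There is, however, one step where what you wrote does not prove what is needed. To show that the forgetful map lands in $\Fl^a_\bd$ you must verify the relation of Proposition \ref{propgen}(2), which is the \emph{projection} condition: the image of $V_{d_l,d_l}$ under $pr_{d_{l+1}}\cdots pr_{d_l+1}$ (killing $w_{d_l+1},\dots,w_{d_{l+1}}$) lies in $V_{d_{l+1},d_{l+1}}$. The condition you derive, $V_{d_l,d_l}\subset V_{d_{l+1},d_{l+1}}+\spa(w_{d_l+1},\dots,w_{d_{l+1}})$, is strictly weaker and is \emph{not} the defining incidence relation: the single-step equivalence of Remark \ref{oplus} uses that $V_{i,j+1}$ lies in $W_{i,j+1}$ and hence has no $w_{j+1}$-component, whereas the diagonal space $V_{d_{l+1},d_{l+1}}$ is only constrained to lie in $W$ and may meet $\spa(w_{d_l+1},\dots,w_{d_{l+1}})$ nontrivially (e.g.\ $V_1=\bC(w_2+w_4)$, $V_3=\spa(w_1,w_2+w_4,w_3)$ in $n=4$, $\bd=(1,3)$ satisfies your sum condition but not the projection condition). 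The fix is immediate and stays inside your framework: chain the relations of $R_\bd$ in their projection form, $pr_{j+1}V_{d_l,j}\subset V_{d_l,j+1}$ for $d_l\le j<d_{l+1}$, to get $pr_{d_{l+1}}\cdots pr_{d_l+1}V_{d_l,d_l}\subset V_{d_l,d_{l+1}}$, and then use the vertical chain $V_{d_l,d_{l+1}}\subset\cdots\subset V_{d_{l+1},d_{l+1}}$. Two smaller comments: your worry about checking that the reconstructed off-diagonal collection satisfies all relations of $R_\bd$ is unnecessary, since for birationality you only need that any point of the fibre is \emph{forced} to equal the reconstruction (existence of the fibre point comes from surjectivity and properness), and the forced formula can be taken simply as $V_{i,j}=V_{d_{l^*},j}\cap W_{i,j}$, generically of dimension $i$ by the same codimension count; finally, recall that the paper defines $R_\bd$ as the image of $R_n$ and only asserts the concrete description you use for the fibration argument, so strictly speaking one should note that every collection satisfying those conditions extends to a point of $R_n$ (an easy induction of the same kind).
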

\begin{proof}
The proof is very similar to the proof for the complete flag varieties.
\end{proof}

\begin{rem}
In the Introduction the varieties $R_\bd$ are denoted by $R_\la$ with
$\lambda=\omega_{d_1}+\ldots+\omega_{d_k}$.
\end{rem}

\subsection{Cell decomposition for $R_n$}
In this section we construct a cell decomposition for $R_n$ which is compatible with the
cell decomposition for
$\Fl^a_n$ (i.e. the map $\pi_n$ is cellular).

\begin{lem}
The group $G^a$ acts naturally on each $Gr(i,W_{i,j})$. The number of $G^a$-orbits is
finite and  the orbits are labeled by torus fixed points. Each orbit is an affine cell.
\end{lem}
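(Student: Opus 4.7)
The plan is to reduce this statement to the already-established Example \ref{Gr} by showing that, as a $G^a$-variety, $Gr(i,W_{i,j})$ is essentially a Grassmannian for a smaller degenerate group $SL^a_{n-(j-i)}$. First, by Lemma \ref{ij}, $W_{i,j}$ is a $\g^a$-module and integrates to a $G^a$-module, which functorially induces a $G^a$-action on $Gr(i,W_{i,j})$.

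Next, I would identify $Gr(i,W_{i,j})$ as a $G^a$-variety with $Gr(i,n-(j-i))$ equipped with the standard cominuscule action of $SL^a_{n-(j-i)}$ from Example \ref{Gr}. The identification uses the relabeling $A_{i,j}\colon W_{i,j}\to\spa(w_1,\dots,w_{n-j+i})$ from the proof of Lemma \ref{ind}. Inspecting the defining formulas $f_{p,q}w_k=\delta_{p,k}w_{q+1}$ and $e_{p,q}w_k=\delta_{q+1,k}w_p$ on $W^{(i)}$, one checks that all root vectors $f_{p,q}$, $e_{p,q}$ whose indices force the target $w_{q+1}$ or $w_p$ to lie in the killed subspace $\spa(w_{i+1},\dots,w_j)$ act trivially on $W_{i,j}$, while the remaining ones descend, under $A_{i,j}$, to the corresponding root vectors of $\g'=\msl_{n-(j-i)}$ acting on its natural cominuscule module $W^{(i)}$. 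Hence the $G^a$-action on $Gr(i,W_{i,j})$ factors through a surjection onto $SL^a_{n-(j-i)}$, and in particular $G^a$-orbits coincide with $SL^a_{n-(j-i)}$-orbits.

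Finally, I would invoke Example \ref{Gr} for $Gr(i,n-(j-i))$: the torus fixed points are precisely the coordinate subspaces $\spa(w_{l_1},\dots,w_{l_i})$ with indices drawn from $\{1,\dots,i,j+1,\dots,n\}$, they are finite in number, and each $SL^a_{n-(j-i)}$-orbit $C(S)$ through a fixed point $p(S)$ is the image under a suitable permutation $T_i$ of a classical Schubert cell in $Gr(i,n-(j-i))$, hence an affine cell. Transporting back via $A_{i,j}$ gives the claimed orbit decomposition of $Gr(i,W_{i,j})$.

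The main obstacle, and really the only nontrivial point, is verifying the compatibility in the second step: that the $\g^a$-action on $W_{i,j}$ genuinely factors through $\msl^a_{n-(j-i)}$ in a way that matches its standard cominuscule action on $W^{(i)}$. Once this compatibility is checked by a direct inspection of root vector formulas, finiteness of orbits, their labeling by torus fixed points, and their affine-cell structure all follow immediately from Example \ref{Gr}.
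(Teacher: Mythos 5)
Your proposal is correct and follows essentially the same route as the paper: the action comes from Lemma \ref{ij}, and the reduction via $A_{i,j}$ to the smaller degenerate group $SL^a_{n-j+i}$ with its standard cominuscule module, followed by an appeal to Example \ref{Gr}, is exactly the paper's argument (stated there as the coincidence of the images of the two representation maps $\phi$ and $\psi$ into $GL(\bigwedge^i W_{i,j})$). Your phrase ``factors through a surjection onto $SL^a_{n-(j-i)}$'' should be read as this equality of images inside $GL(\bigwedge^i W_{i,j})$, which is precisely what makes the $G^a$-orbits coincide with the $SL^a_{n-j+i}$-orbits.
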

\begin{proof}
Fix a pair $1\le i\le j\le n-1$. Recall that $V_{i,j}\subset W_{i,j}$. Therefore,
$V_{i,j}$ can be considered as a point in $\bP(\bigwedge^i(W_{i,j}))$.
The spaces $\bigwedge^i(W_{i,j})$ carry a natural structure
of $\g^a$- and $G^a$-modules (see Lemma \ref{ij}). This produces a $G^a$-action on $\bP(\bigwedge^i(W_{i,j}))$
and thus on the variety $Gr(i,W_{i,j})$ of $i$-dimensional subspaces of $W_{i,j}$.

Let us consider the smaller group $SL_{n-j+i}^a$. Using the maps $A_{i,j}$ from Lemma \ref{ind}
we endow $W_{i,j}$ and all its wedge powers with the standard structure of $SL_{n-j+i}^a$-modules
(we identify $\bigwedge^k(W_{i,j})$ with the $SL_{n-j+i}^a$-modules $V_{\om_k}^a$).
Let $\phi: SL_{n-j+i}^a\to GL(\bigwedge^k(W_{i,j}))$ be the representation map and also let
$\psi: SL_n^a\to GL(\bigwedge^k(W_{i,j}))$ be the map defining the $G^a$ action on $\bigwedge^k(W_{i,j})$.
It is easy to see that the images of $\phi$ and $\psi$ coincide. Therefore,
Example \ref{Gr} (applied to the group $SL_{n-j+i}^a$) implies the statement of our Lemma.
\end{proof}

We note that the torus fixed points in the Grassmannian of $i$-dimensional subspaces in $W_{i,j}$
are labeled by the sequences
$$S=(l_1< \dots <l_i)\subset \{1,\dots,i,j+1,\dots,n\}.$$
In what follows we denote the corresponding point by $p(S)$.
We also denote the corresponding orbit $G^a\cdot p(S)\subset Gr(i,W_{i,j})$ by $C(S)$.

Recall that $R_n$ sits inside $\prod_{1\le i\le j\le n-1} Gr(i,W_{i,j})$. The group $G^a$ acts on
this product via the action on each factor.
\begin{lem}
The variety $R_n$ is invariant with respect to this action and $\pi_n:R_n\to \Fl^a_n$ is $G^a$-equivariant.
\end{lem}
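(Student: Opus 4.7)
The plan is to reduce both assertions to the $\g^a$-equivariance of a few natural linear maps between the ambient spaces $W_{i,j}$ of Lemma \ref{ij}. Conditions \ref{1} and \ref{2} of Definition \ref{Rn} are preserved automatically once $V_{i,j}$ is viewed as a point of $Gr(i,W_{i,j})$. For condition \ref{3}, I would show that the quotient map $pr_{j+1}\colon W_{i,j}\to W_{i,j+1}$ and the inclusion $W_{i,j+1}\hookrightarrow W_{i+1,j+1}$ are both $\g^a$-equivariant; then for any $g\in G^a$ one will have
\[
pr_{j+1}(gV_{i,j})=g(pr_{j+1}V_{i,j})\subset gV_{i,j+1}\subset gV_{i+1,j+1},
\]
so that $R_n$ is $G^a$-stable.

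To establish equivariance of the first map, I would note that $W_{i,j+1}=W^{(i)}/\spa(w_{i+1},\ldots,w_{j+1})$ is the further quotient of $W_{i,j}=W^{(i)}/\spa(w_{i+1},\ldots,w_j)$ by $\bC w_{j+1}$, and check that $\bC w_{j+1}$ is $\g^a$-stable in $W_{i,j}$. Using the explicit formulas for the $W^{(i)}$-action, any non-trivially acting $f_{r,s}$ has $r\le i\le s$, so $f_{r,s}w_{j+1}=\delta_{r,j+1}w_{s+1}=0$; and $e_{r,s}w_{j+1}=\delta_{s,j}w_r$ is non-zero only when $s=j$, which forces $r>i$ and $r\le j$, placing $w_r$ in $\spa(w_{i+1},\ldots,w_j)$, which vanishes in $W_{i,j}$. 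For the inclusion $W_{i,j+1}\hookrightarrow W_{i+1,j+1}$, I would verify directly that the subspace $\spa(w_1,\ldots,w_i,w_{j+2},\ldots,w_n)$ is stable under the $W^{(i+1)}$-action and that the restricted action agrees with the intrinsic $W^{(i)}$-action. The only generators whose (non-)triviality pattern differs between $W^{(i)}$ and $W^{(i+1)}$ are $f_{r,i}$, $f_{i+1,s}$, $e_{r,i}$ and $e_{i+1,s}$, and a short case analysis will show each acts as zero on the retained vectors: either the target vector $w_{i+1}$ is killed in the appropriate quotient, or the basis index $k=i+1$ does not occur among $w_1,\ldots,w_i,w_{j+2},\ldots,w_n$.

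Finally, $\pi_n$ factors as the inclusion $R_n\hookrightarrow\prod_{i\le j}Gr(i,W_{i,j})$ followed by the projection onto the diagonal factors $Gr(d,W_{d,d})=Gr(d,W^{(d)})$. Since $\Fl^a_n\subset\prod_{d=1}^{n-1} Gr(d,n)$ receives its $G^a$-action precisely through the $W^{(d)}$-structures used in Example \ref{Gr}, the equivariance of $\pi_n$ will be immediate. The main obstacle is the bookkeeping in the previous paragraph: the $\g^a$-modules $W^{(i)}$ and $W^{(i+1)}$ are genuinely non-isomorphic, so one must carefully track which generators act trivially on each side and which off-diagonal outputs are killed by the relevant quotients before the two actions can be seen to coincide on the common subspace $W_{i,j+1}$.
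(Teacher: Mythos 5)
Your proposal is correct and follows essentially the same route as the paper: the paper's proof also rests on exactly the two facts you isolate, namely that $\bC w_{j+1}$ is a submodule of $W_{i,j}$ with quotient $W_{i,j+1}$ (equivariance of $pr_{j+1}$) and that the smaller ambient space sits inside the larger one compatibly with the actions, from which the chain of inclusions defining $R_n$ is preserved. The only difference is organizational: the paper verifies these inclusions for $b\in B$ and declares the $(N^-)^a$ case "very similar," while you check equivariance uniformly on all generators $f_{r,s}$, $e_{r,s}$ of $\g^a$, thereby writing out the bookkeeping (the generators whose triviality pattern differs between $W^{(i)}$ and $W^{(i+1)}$) that the paper leaves implicit.
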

\begin{proof}
First, take $b\in B\subset G^a$ and fix a point $\bV=(V_{i,j})\in R_n$. We need to show that
for any $1\le i\le j\le n-1$
\begin{equation}\label{b}
bV_{i-1,j}\subset bV_{i,j}\subset bV_{i,j+1}\oplus \bC w_{j+1}.
\end{equation}
We note that $W_{i-1,j}\subset W_{i,j}$ and the $B$-action on $W_{i-1,j}$ is
a restriction of the action on $W_{i,j}$. Therefore, the first embedding in \eqref{b} follows.
To prove the second embedding we note that $\bC w_{j+1}$ is a $\fb$-submodule in $W_{i,j}$ and
the quotient module is isomorphic to $W_{i,j+1}$.

Now take $g\in (N^-)^a$. We need to prove that
for any $1\le i\le j\le n-1$
\[
gV_{i-1,j}\subset gV_{i,j}\subset gV_{i,j+1}\oplus \bC w_{j+1}.
\]
The proof is very similar to the proof of \eqref{b} and we omit it.
\end{proof}

Let ${\bf S}=(S_{i,j})_{1\le i\le j\le n-1}$ be a collection of sets such that $\# S_{i,j}=i$ and
$S_{i,j}\subset \{1,\dots,i,j+1,\dots,n\}$. We call such a collection admissible if
\begin{equation}\label{admis}
S_{i-1,j}\subset S_{i,j}\subset S_{i,j+1}\cup \{j+1\}.
\end{equation}
The following lemma is simple, but important for us.
\begin{lem}
A point $p(\bS)=\prod_{1\le i\le j\le n-1} p(S_{i,j})$ belongs to $R_n$
if and only if $\bS$ is admissible. If a point $p=\prod_{1\le i\le j< n} p_{i,j}$, $p_{i,j}\in C(S_{i,j})$
belongs to $R_n$, then the collection $\bS=(S_{i,j})$ is admissible.\qed
\end{lem}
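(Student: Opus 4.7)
The plan is to split the proof into two parts. First, I would establish the biconditional (1) by directly translating the defining conditions of $R_n$ to admissibility when applied to coordinate subspaces. Then, I would deduce (2) from (1) by a torus-degeneration argument.

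For (1), each $p(S_{i,j}) = \mathrm{span}(w_l : l \in S_{i,j})$ automatically has dimension $i = \#S_{i,j}$ and lies in $W_{i,j}$ by hypothesis on $S_{i,j}$. Since $pr_{j+1}$ simply annihilates $w_{j+1}$, one has $pr_{j+1}\, p(S_{i,j}) = p(S_{i,j}\setminus\{j+1\})$, so the condition $pr_{j+1} V_{i,j} \subset V_{i,j+1}$ becomes $S_{i,j}\setminus\{j+1\}\subset S_{i,j+1}$, equivalently $S_{i,j}\subset S_{i,j+1}\cup\{j+1\}$. Similarly, $V_{i,j+1} \subset V_{i+1,j+1}$ becomes $S_{i,j+1}\subset S_{i+1,j+1}$. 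These are precisely the two admissibility conditions \eqref{admis}, so both directions of (1) follow.

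For (2), the key geometric inputs are that $R_n$ is a closed subvariety of $\prod_{(i,j)} Gr(i,W_{i,j})$ (it is cut out by the closed incidence relations of Definition \ref{Rn}) and is $T$-invariant, since it is $G^a$-invariant by the preceding lemma and $T\subset G^a$. I would choose a one-parameter subgroup $\lambda:\bG_m \to T\oplus\bC^* d$ of the extended torus whose weights are sufficiently generic that, in every factor $Gr(i,W_{i,j})$, each cell $C(S)$ is the Bia\l{}ynicki-Birula cell attracting to its $T$-fixed point $p(S)$ under $\lim_{t\to 0}$. Applying $\lambda$ componentwise to $p\in R_n$ then yields $\lim_{t\to 0}\lambda(t)\cdot p = p(\bS)$, which lies in the closed subvariety $R_n$; by (1), $\bS$ is admissible.

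The only delicate step is producing a single $\lambda$ that simultaneously realizes each $C(S_{i,j})$ as a Bia\l{}ynicki-Birula cell across all factors. Using the description $C(S) = T_d(B\cdot p(\tau(S)))$ from Example \ref{Gr}, this reduces to a finite collection of weight inequalities whose form depends on $d$; the cyclic reordering induced by $T_d$ means that the $T$-weights alone cannot be ordered uniformly across all $d$, but the PBW grading $d$ contributes exactly the extra scalar parameter needed to satisfy all the inequalities jointly. This is where the PBW-extension of the torus is essential.
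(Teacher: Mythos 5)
Your proposal is correct, and part (1) is exactly the translation the paper has in mind (the lemma is stated with an immediate \qed, the intended argument being the observation that at coordinate subspaces the incidence conditions of Definition \ref{Rn} literally become \eqref{admis}). For the second sentence your route differs from the implicit one: the paper's ``simple'' argument is the direct bookkeeping that a point of $C(S)$ admits an echelon basis with pivot set $S$ relative to the cyclically rotated order of Example \ref{Gr}, and that the containments $pr_{j+1}V_{i,j}\subset V_{i,j+1}\subset V_{i+1,j+1}$ force the corresponding containments of pivot sets; you instead degenerate by a one-parameter subgroup and use that $R_n$ is closed in $\prod Gr(i,W_{i,j})$ and invariant under the extended torus, reducing everything to part (1). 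That is a clean and valid alternative, and the ``delicate step'' you flag is genuine but resolves exactly as you predict; it would be worth making the cocharacter explicit rather than asserting its existence. For instance, let $\lambda(t)$ act on the basis vector $w_k$ of $W_{i,j}$ with weight $(n-k)+c$ for $k\ge j+1$ and $n-k$ for $k\le i$, where $c\ge n$ (this is the cocharacter of $T$ with decreasing weights $n-k$ twisted by $c$ times the PBW grading, which scales exactly the $w_k$ with $k\ge j+1$ in $W_{i,j}$). Then in every factor the weights are strictly decreasing along the cyclic order $(j+1,\dots,n,1,\dots,i)$, and since by Example \ref{Gr} (transported by $A_{i,j}$) the cell $C(S_{i,j})$ consists of subspaces with echelon bases $w_s+\sum c_mw_m$ where $m$ precedes $s$ in that cyclic order, one gets $\lim_{t\to 0}\lambda(t)p_{i,j}=p(S_{i,j})$ in each factor, hence $p(\bS)\in R_n$ and $\bS$ is admissible. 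One should also note (as you do implicitly) that invariance of $R_n$ under the grading $\bC^*$, not just under $G^a$, needs the one-line check that the scalings of $W_{i,j+1}$ coming from the $(i,j+1)$ and $(i+1,j+1)$ factors agree and commute with $pr_{j+1}$; this is immediate. So: correct, somewhat heavier machinery than the intended pivot argument, but it buys a uniform statement (the $G^a$-cells are Bia\l{}ynicki-Birula cells for the extended torus) that is of independent interest.
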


For an admissible collection $\bS$ we introduce the notation
\[
C(\bS)=R_n\cap \prod_{1\le i\le j\le n-1} C(S_{i,j}).
\]
We have the decomposition
\[
R_n=\bigcup_{\text {admissible } \bS} C(\bS).
\]
Our next goal is to show that $C(\bS)$ is an affine cell and
to compute it dimension.

For a number $l$, $-n<l\le n$ we set $\overline{l}=l$ if $l>0$ and
$\overline{l}=l+n$ otherwise. So $1\le \overline{l}\le n$.

\begin{thm}\label{cells}
$C({\bS})$ is an affine cell for any admissible ${\bf S}$. The map $\pi_n$ is cellular,
mapping $C(\bS)$ to $C(S_{1,1},\dots,S_{n-1,n-1})$.
\end{thm}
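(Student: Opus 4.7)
The plan is to prove both assertions by induction along the tower of $\bP^1$-fibrations $\rho_k:R_n(k)\to R_n(k-1)$ constructed in the proof of Proposition \ref{birat}. For an admissible collection $\bS$, write $\bS^{(k)}=(S_{\beta_l})_{l\le k}$ and set $C(\bS^{(k)}):=R_n(k)\cap\prod_{l\le k}C(S_{\beta_l})$. The main claim to prove inductively is that $C(\bS^{(k)})$ is an affine cell; the case $k=n(n-1)/2$ then yields the first statement of the theorem, and the base case $k=0$ is trivial.

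For the inductive step with $\beta_k=\al_{i,j}$, fix $x\in C(\bS^{(k-1)})$ and examine the fibre $F_x:=\rho_k^{-1}(x)\cap C(\bS^{(k)})$. By Proposition \ref{birat}, $\rho_k^{-1}(x)$ is the projective line of $i$-dimensional $V_{i,j}\subset W_{i,j}$ satisfying $V_{i-1,j}\subset V_{i,j}\subset V_{i,j+1}\oplus\bC w_{j+1}$ (with the analogous modifications in the boundary cases $i=1$ or $j=n-1$). The rank-$(\mathrm{rk}(\g)+1)$ torus acting on $R_n$ induces an action on this fibre with exactly two fixed points; moreover, by Example \ref{Gr} together with Lemma \ref{ind}, the $G^a$-orbit decomposition of $Gr(i,W_{i,j})$ restricted to this $\bP^1$ partitions it into its two torus orbits, namely a fixed point and its affine complement $\mathbb A^1$. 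Given the already-specified $S_{i-1,j}$ and $S_{i,j+1}$, the admissibility conditions \eqref{admis} leave exactly two possibilities for $S_{i,j}$, and the crucial matching step shows that these two extensions are in bijection with the two torus-fixed points of the fibre. Hence $F_x$ is either a single point or a copy of $\mathbb A^1$, according to which admissible extension has been selected.

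It follows that the restriction of $\rho_k$ is a Zariski-locally trivial bundle of fibre dimension $m\in\{0,1\}$ over $C(\bS^{(k-1)})$. By the inductive hypothesis $C(\bS^{(k-1)})\cong\mathbb A^N$, and since every affine bundle of rank at most one over an affine space is trivial (for rank one using $H^1(\mathbb A^N,\cO)=0$), we obtain $C(\bS^{(k)})\cong\mathbb A^{N+m}$, completing the induction. The cellularity of $\pi_n$ is then immediate: since $\pi_n$ forgets the off-diagonal components, $\pi_n(C(\bS))\subset\Fl^a_n\cap\prod_{i=1}^{n-1}C(S_{i,i})$, which equals $C(S_{1,1},\ldots,S_{n-1,n-1})$ by Proposition \ref{prop}(3).

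The main technical obstacle lies in the matching of the inductive step: one must analyse the generic and boundary configurations separately and explicitly identify which of the two torus-fixed points of the fibre $\bP^1$ corresponds to each of the two admissible extensions of $(S_{i-1,j},S_{i,j+1})$ to $S_{i,j}$. This requires a careful translation, via Lemma \ref{ind}, between the coordinate labels $S$ and the $G^a$-orbit structure on $Gr(i,W_{i,j})$, and it is at this point that the notation $\overline{l}$ introduced just before the theorem becomes essential for encoding the explicit dimension formula.
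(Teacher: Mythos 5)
Your inductive skeleton --- climbing the tower $\rho_k:R_n(k)\to R_n(k-1)$ and showing that the prescribed cell $C(S_{i,j})$ cuts each fibre in a point or an affine line --- is the same as the paper's (the paper's decreasing induction on $j-i$ runs along exactly this tower). The problem is that the one step you offer a justification for is justified incorrectly, and the step you defer is the actual content of the theorem. Over a general point $x\in C(\bS^{(k-1)})$ the fibre $\rho_k^{-1}(x)\cong\bP\left((V_{i,j+1}\oplus\bC w_{j+1})/V_{i-1,j}\right)$ is \emph{not} stable under the torus (it is only when $x$ is itself a torus-fixed point), so ``its two torus-fixed points'' and ``its two torus orbits'' are not defined, and the assertion that the $G^a$-orbit stratification of $Gr(i,W_{i,j})$ meets this line in one point plus its $\mathbb{A}^1$-complement is unsubstantiated: a priori the line could meet the chosen cell $C(S_{i,j})$ in the empty set, in finitely many points, or in the complement of several points, and could meet cells other than the two you expect. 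Deciding that the intersection is nonempty and is exactly a point or exactly an $\mathbb{A}^1$, with the alternative governed combinatorially by $\bS$ alone, is precisely the explicit case analysis in the paper's proof (the cases $i=1$, $j=n-1$, $j+1\notin S_{i,j}$, and $j+1\in S_{i,j}$ with $\ol{l-j}\lessgtr\ol{m-j}$, carried out on explicit basis vectors of the form $w_l+c_{l-1}w_{l-1}+\dots+c_{j+1}w_{j+1}$); it is also what produces Corollary \ref{dc}. You name this ``matching step'' as a technical obstacle and leave it out, but without it the induction has no content and the torus argument cannot substitute for it.

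Two further points. First, even granting the fibrewise description, your appeal to ``an affine bundle of rank at most one over $\mathbb{A}^N$ is trivial'' presupposes that the restriction of $\rho_k$ to $C(\bS^{(k)})$ is Zariski-locally trivial with fibre dimension constant over the base; the paper sidesteps this by writing down global coordinates (the free coefficient $c_{j+1}$, resp.\ $c_m$, of the explicit basis vector), which trivializes the fibration at the same time as it proves the fibre statement. Second, the paper's notion of cellularity is stronger than the inclusion $\pi_n(C(\bS))\subset C(S_{1,1},\dots,S_{n-1,n-1})$ that you deduce from Proposition \ref{prop}: the proof also makes $C(\bS)\to C(S_{1,1},\dots,S_{n-1,n-1})$ a trivial fibration with affine fibre, which is what the relative-dimension statement of Corollary \ref{cdc} relies on; your argument does not address this part at all.
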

\begin{proof}
We need to do two things: first, to construct coordinates on a cell $C(\bS)$ and, second,
to construct coordinates on each fiber of the map $$C(\bS)\to C(S_{1,1},\dots,S_{n-1,n-1})$$ such that
this map becomes a trivial fibration with an affine fiber. We start with the first part.

We want to construct coordinates on $C({\bf S})$. Namely, we need to attach coordinates to collections
of subspaces $(V_{i,j})_{1\le i\le j< n}\in R_n$. We do it by decreasing induction on $j-i$. We start
with $j-i=n-2$, i.e. $i=1$, $j=n-1$. Then either $S_{1,n}=(n)$ or $S_{1,n}=(1)$. In the first case
the cell $C((n))$ is a point and in the second case $V_{1,n-1}$ is spanned by a single vector
$v_1+av_n$ and $a$ is our first coordinate.
Assume that we have attached coordinates to all subspaces $V_{i,j}$ with $j-i>k$ and we proceed with $j-i=k$.
We consider three cases.

Let $i=1$. Then the only condition we have is $V_{1,j}\subset V_{1,j+1}\oplus\bC v_{j+1}$. Let $S_{1,j}=(l)$.
There are two cases: $l=j+1$ and $l\ne j+1$. In the first case
we do not have to add any coordinates, since $C((j+1))\subset Gr(1,W_{1,j})$  is a point.
Let $l\ne j+1$ and let
$v\in V_{1,j+1}$ be a basis vector. Then a basis vector for $V_{1,j}$ is of the form $v+aw_{j+1}$ and therefore we
have added one more coordinate.

Let $j=n-1$. Then we have the condition $V_{i-1,n-1}\subset V_{i,n-1}$. We know that
$S_{i,n-1}=S_{i-1,n-1}\cup \{l\}$. There are two cases: $l=i$ and $l\ne i$. First, let $l=i$.
Let $m=\{1,\dots,i-1,n\}\setminus S_{i-1,n}$. Since $V_{i-1,n}$ is $(i-1)$-dimensional, we need to
specify one more basis vector in $V_{i,n-1}$ in order to fix it. This basis vector has to be of the form
\[
w_i+c_{i-1}w_{i-1}+\dots + c_1w_1+c_n w_n,\ c_k\in\bC
\]
We note that by adding an appropriate vector from $V_{i-1,n-1}$, any vector of the form as above can be
reduced to $w_i+aw_m$. This gives one additional coordinate.
Second, let $l\ne i$. Then $l=\{1,\dots,i-1,n\}\setminus S_{i-1,n}$. A basis vector we have to add to $V_{i-1,n-1}$
in order to fix $V_{i,n-1}$ is of the form
\[
w_l+c_{l-1}w_{l-1}+\dots + c_1w_1+c_n w_n,\ c_k\in\bC.
\]
Since $w_i$ never appears in the decomposition as above, such a vector
(modulo $V_{i-1,n-1}$) is equal to $w_l$ and
we do not have to add a coordinate.

Let $i>1$, $j<n-1$. Then we have
\[
S_{i-1,j}\subset S_{i,j}\subset S_{i,j+1}\cup\{j+1\}, \
V_{i-1,j}\subset V_{i,j}\subset V_{i,j+1}\oplus\bC w_{j+1}.
\]
First, let $S_{i,j}=S_{i,j+1}$, i.e. $j+1\notin S_{i,j}$. Let $l=S_{i,j}\setminus S_{i-1,j}$.
Then a basis  vector we have
to add to $V_{i-1,j}$ in order to fix $V_{i,j}$ is of the form
\[
w_l+c_{l-1}w_{l-1}+\dots+ c_1w_1+c_n w_n+\dots +c_{j+1}w_{j+1}.
\]
Since this vector has to belong to $V_{i,j+1}$, the only freedom we have is a coefficient $c_{j+1}$
(note that $l\ne j+1$).
Therefore, we have to add one additional coordinate in this  case.
Second, let $S_{i,j}\ne S_{i,j+1}$, i.e. $j+1\in S_{i,j}$. Then $S_{i,j}=S_{i,j+1}\setminus\{m\}\cup\{j+1\}$.
Recall $l=S_{i,j}\setminus S_{i-1,j}$. A basis  vector we have
to add to $V_{i-1,j}$ in order to fix $V_{i,j}$ is of the form
\begin{equation}\label{form}
w_l+c_{l-1}w_{l-1}+\dots+ c_1w_1+c_n w_n+\dots +c_{j+1}w_{j+1}.
\end{equation}
Recall that for a number $l$, $-n<k\le n$ we set $\overline{l}=l$ if $l>0$ and
$\overline{l}=l+n$ otherwise.
There are two cases now: $\ol{l-j}<\ol{m-j}$ and $\ol{l-j}>\ol{m-j}$. Let $\ol{l-j}<\ol{m-j}$.
Then the vector $w_m$ never appears
in the decomposition \eqref{form} and therefore there exists a single vector in $V_{i,j+1}$ of the form  \eqref{form}.
Thus no new coordinates have to be added. Finally, let $\ol{l-j}>\ol{m-j}$. Then
a vector $w_m$ is present in \eqref{form}.
Therefore, there exists exactly one-parameter family of vectors in $V_{i,j+1}$ of the form \eqref{form}.
Thus one additional coordinate has to be added.

To complete the proof of the theorem we need to construct coordinates on the fibers of the map
$C(\bS)\to C(S_{1,1},\dots,S_{n-1,n-1})$. To do this, one need to fix a collection of subspaces $V_{i,i}\in C(S_{i,i})$
such that $(V_{i,i})_{i=1}^{n-1}\in\Fl^a_n$ and then start looking at all possible values of other
$V_{i,j}\in C(S_{i,j})$ moving from lower values of $j-i$ to higher ones. The procedure is very
similar to the one worked out above, so we omit the details.
\end{proof}

\begin{cor}\label{dc}
For an admissible $\bS$ the dimension of the cell $C({\bf S})$
is equal to the sum of $n(n-1)/2$ terms $g_{i,j}$ labeled by pairs $1\le i\le j\le n-1$.
Each summand is either $0$ or $1$ and is given by the following rule:
\begin{itemize}
\item Let $i=1$, $j=n-1$. If $S_{1,n}=(1)$, then $g_{1,n-1}=1$. Otherwise $g_{1,n-1}=0$.
\item Let $i=1$ and $S_{1,j}=(l)$. If $l\ne j+1$, then $g_{i,j}=1$. Otherwise $g_{i,j}=0$.
\item Let $j=n-1$. Let $\{l\}=S_{i,n-1}\setminus S_{i-1,n-1}$. If $l=i$, then $g_{i,j}=1$.
Otherwise $g_{i,j}=0$.
\item Let $i>1$ and $j<n-1$.
\subitem If $j+1\notin S_{i,j}$, then $g_{i,j}=1$.
\subitem  If $j+1\in S_{i,j}$, set $l=S_{i,j}\setminus S_{i-1,j}$, $m=S_{i,j+1}\setminus S_{i,j}$.
If $\ol{l-j}>\ol{m-j}$, then $g_{i,j}=1$. Otherwise $g_{i,j}=0$.
\end{itemize}
\end{cor}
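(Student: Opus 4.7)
The plan is to read off the dimension directly from the inductive construction of coordinates in the proof of Theorem~\ref{cells}. That proof fixes a basis for each $V_{i,j}$ by decreasing induction on $j-i$, adding either $0$ or $1$ new parameter at each step depending on the data $S_{i,j}$ and its neighbours in $\bS$. Since $C(\bS)$ is affine by Theorem~\ref{cells}, its dimension equals the total number of parameters added, so it suffices to match each rule defining $g_{i,j}$ to the corresponding case in that proof.

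First I would dispose of the base $(i,j)=(1,n-1)$: here $W_{1,n-1}=\spa(w_1,w_n)$ and $V_{1,n-1}$ is a line in it, so $C(S_{1,n-1})$ is a point when $S_{1,n-1}=\{n\}$ and a one-parameter cell when $S_{1,n-1}=\{1\}$, which matches the first bullet. The boundary cases $i=1$, $j<n-1$ and $j=n-1$, $i>1$ are handled in the same spirit: only one of the two containments in Definition~\ref{Rn}(iii) is active, and the proof of Theorem~\ref{cells} already identifies exactly when the new basis vector carries a free coefficient (namely, when the newly added index in $S_{i,j}$ is not the one forced by the torus type). Reading off these conditions gives the second and third bullets.

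The substantive case is the interior one, $1<i$ and $j<n-1$. If $j+1\notin S_{i,j}$, then $S_{i,j}=S_{i,j+1}$, and the vector to be adjoined to $V_{i-1,j}$ has leading term $w_l$ with $\{l\}=S_{i,j}\setminus S_{i-1,j}$ and must lie in $V_{i,j+1}$; the only coefficient not forced by this condition is that of $w_{j+1}$, giving $g_{i,j}=1$. If $j+1\in S_{i,j}$, let $\{m\}=S_{i,j+1}\setminus S_{i,j}$; then whether the adjoined vector admits a one-parameter modification within $V_{i,j+1}$ is controlled by whether $w_m$ appears in the expansion~\eqref{form}, which is precisely encoded by the cyclic inequality $\ol{l-j}>\ol{m-j}$.

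The only non-routine point is checking this last cyclic comparison carefully (in particular confirming that the bar convention correctly captures the order in which coordinates $w_1,\ldots,w_n$ enter the expansion~\eqref{form}), and that is where I would focus the writing. Everything else is bookkeeping that transcribes the case analysis of Theorem~\ref{cells}; summing the resulting contributions $g_{i,j}$ over all $n(n-1)/2$ pairs $1\le i\le j\le n-1$ then yields $\dim C(\bS)$.
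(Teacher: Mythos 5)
Your proposal is correct and follows the same route as the paper: the paper's own proof of Corollary~\ref{dc} is exactly a reference to the coordinate construction in the proof of Theorem~\ref{cells}, and your case-by-case transcription (including the interior case with the cyclic comparison $\ol{l-j}>\ol{m-j}$ governing whether $w_m$ occurs in the expansion~\eqref{form}) matches that argument.
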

\begin{proof}
Follows from the explicit construction of the coordinates on $C(\bS)$.
\end{proof}

\begin{cor}\label{cdc}
The relative dimension $\dim C({\bf S})-\dim C(S_{i,i})_{i=1}^{n-1}$
is equal to the sum of $(n-1)(n-2)/2$ terms $h_{i,j}$ labeled by pairs $1\le i< j\le n-1$.
Each summand is either $0$ or $1$ and is given by the following rule.
Let $l=S_{i,j}\setminus S_{i,j-1}$, $m=S_{i+1,j}\setminus S_{i,j}$. Then $h_{i,j}=0$ if
and only if  $\ol{m-j}<\ol{l-j}$ and $j\in S_{i,j-1}$.
\end{cor}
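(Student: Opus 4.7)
The plan is to revisit the coordinate construction used in the proof of Theorem \ref{cells}, but organized in the reverse direction: starting from the diagonal, which corresponds to fixing a point of the $\Fl^a_n$-cell $C(S_{1,1},\dots,S_{n-1,n-1})$, and then building $V_{i,j}$ for $i<j$ inductively in order of increasing $j-i$. In this order the fiber of $\pi_n|_{C(\bS)}$ is exhibited as a trivial affine fibration (compatibly with Theorem \ref{cells}), and its dimension is the number of new parameters added at the off-diagonal stages.

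At the inductive stage $(i,j)$ with $i<j$ the already determined subspaces are $V_{i,j-1}$ and $V_{i+1,j}$, both of which have strictly smaller $j-i$. The constraints on $V_{i,j}$ read $pr_j V_{i,j-1}\subset V_{i,j}\subset V_{i+1,j}$ together with $V_{i,j}\subset W_{i,j}$, $\dim V_{i,j}=i$, and $V_{i,j}\in C(S_{i,j})$. The key dichotomy is whether $j\in S_{i,j-1}$. If $j\notin S_{i,j-1}$, admissibility forces $S_{i,j-1}=S_{i,j}$, the projection $pr_j V_{i,j-1}$ is already $i$-dimensional, $V_{i,j}=pr_j V_{i,j-1}$ is forced, and no new coordinate appears. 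If $j\in S_{i,j-1}$, then $pr_j V_{i,j-1}$ is only $(i-1)$-dimensional and $V_{i,j}$ is obtained by adjoining one further basis vector, which must have pivot at $l=S_{i,j}\setminus S_{i,j-1}$ and simultaneously lie in $V_{i+1,j}$; whether this leaves a free trailing coefficient depends on the cyclic position of $l$ relative to the pivot $m=S_{i+1,j}\setminus S_{i,j}$ of the extra basis vector of $V_{i+1,j}$ above $V_{i,j}$.

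The hard part is this latter case: one must read off from the parametrization of the cell $C(S_{i+1,j})\subset Gr(i+1,W_{i+1,j})$ precisely which trailing coefficients of a basis vector with pivot $l$ are free and which are pinned down by membership in the given $V_{i+1,j}$. This is the same bookkeeping as in the proof of Theorem \ref{cells} (most of whose complexity lies in the subcase $i>1$, $j<n-1$, $j+1\in S_{i,j}$), only re-indexed; the cyclic ordering $\ol{\cdot-j}$ enters because via the map $A_{i,j-1}$ of Lemma \ref{ind} the cell $C(S_{i,j-1})$ is identified with a cell on a smaller Grassmannian whose intrinsic ordering of basis vectors starts at $w_{j+1}$ and wraps around through $w_n, w_1,\dots,w_i$. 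Once this cyclic dictionary is in place the comparison of $\ol{l-j}$ with $\ol{m-j}$ records exactly when the would-be parameter is free, delivering the stated value of $h_{i,j}$ and hence the relative dimension by summation over $1\le i<j\le n-1$.
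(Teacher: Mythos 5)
Your route is the paper's own: its proof of Corollary \ref{cdc} simply points back to the fiber-direction coordinate construction sketched at the end of the proof of Theorem \ref{cells} (fix a point of the diagonal cell, then add the $V_{i,j}$ with $i<j$ by increasing $j-i$, counting the new parameters), and your dichotomy on whether $j\in S_{i,j-1}$ is the right organizing step: since $\bC w_j$ is the first step of the flag defining the cells of $Gr(i,W_{i,j-1})$, one has $w_j\in V_{i,j-1}$ iff $j\in S_{i,j-1}$ uniformly on the cell, so the rank of $pr_j$ on $V_{i,j-1}$ is constant there. The difficulty is that what your analysis produces is not the rule as printed, and you assert agreement instead of checking it. In the case $j\notin S_{i,j-1}$ you (correctly) conclude that $V_{i,j}=pr_jV_{i,j-1}$ is forced, i.e. $h_{i,j}=0$; but the printed rule makes $h_{i,j}=0$ only when $j\in S_{i,j-1}$, hence assigns $h_{i,j}=1$ in exactly your case. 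Your value is the consistent one: on the big cell ($S_{i,j}=\{1,\dots,i\}$ for all $i,j$) every off-diagonal pair falls into this case and the relative dimension must be $0$ (by birationality of $\pi_n$ there, or by comparing with Corollary \ref{dc}), whereas the printed rule would give $(n-1)(n-2)/2$; for $n=3$ and $S_{1,1}=(2)$, $S_{1,2}=(1)$, $S_{2,2}=(1,3)$ the fiber over the singular point is one-dimensional although the printed rule gives $h_{1,2}=0$; and on the codimension-one cells \eqref{codim1} the printed rule contradicts the isomorphism statement used in Lemma \ref{smooth}. So the corollary must be proved with the values $0$ and $1$ interchanged, i.e. $h_{i,j}=1$ iff $j\in S_{i,j-1}$ and $\ol{m-j}<\ol{l-j}$; a write-up that derives the opposite of the stated rule in the only case it treats, and then claims to ``deliver the stated value'', is not a proof of the statement until this discrepancy is made explicit.

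Second, the case that carries all the content, $j\in S_{i,j-1}$, is not actually carried out: which direction of the comparison of $\ol{l-j}$ with $\ol{m-j}$ leaves a free coefficient is precisely what the corollary asserts, and deferring it to ``the same bookkeeping as Theorem \ref{cells}, re-indexed'' leaves the crux unproved — especially given the sign problem above. The clean way to finish is intrinsic rather than coordinate-by-coordinate: here $pr_jV_{i,j-1}$ is $(i-1)$-dimensional with jump set $S_{i,j}\setminus\{l\}$ and is contained in $V_{i+1,j}$, so the admissible $V_{i,j}$ correspond to lines in the two-dimensional quotient $V_{i+1,j}/pr_jV_{i,j-1}$, on which the flag $\spa(w_{j+1})\subset\spa(w_{j+1},w_{j+2})\subset\cdots$ induces a flag with jumps at positions $\ol{l-j}$ and $\ol{m-j}$. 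The lines whose new jump sits at $l$ form a single point if $\ol{l-j}<\ol{m-j}$ and the complement of a point in a $\bP^1$ (an affine line) if $\ol{m-j}<\ol{l-j}$. This gives $h_{i,j}=1$ exactly when $j\in S_{i,j-1}$ and $\ol{m-j}<\ol{l-j}$, i.e. the corrected form of the rule, and it also exhibits the fiber of $\pi_n$ over the fixed base point as a product of affine lines, which is what the trivial-fibration claim in Theorem \ref{cells} requires.
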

\begin{proof}
Follows from the explicit construction of the coordinates on $C(\bS)$.
\end{proof}

We note that the desingularization $\pi_n$ is small up to $n=4$, semismall 
up to $n=7$, but not semismall starting from $n=8$.   


Finally, we note that Theorem \ref{cells} as well as Corollaries \ref{dc} and \ref{cdc} have their
obvious parabolic analogues. Namely, let us call a collection
$\bS=(S_{i,j})_{(i,j)\in P_\bd}$ $\bd$-admissible,
if condition \eqref{admis} holds provided the corresponding pairs of indices are in $P_\bd$.
Then the following theorem holds:
\begin{prop}
$1)$.\ $R_\bd$ is a disjoint union of the cells
$$\bigsqcup_{\bd-\text{admissible }\bS} \left(R_\bd\cap \prod_{(i,j)\in P_\bd} C(S_{i,j})\right).$$
$2)$.\ The map $R_\bd\to \Fl^a_\bd$ is cellular.\\
$3)$.\ The dimensions and relative dimensions are equal to the sum of terms $g_{i,j}$ and $h_{i,j}$ from Corollaries \ref{dc}
and \ref{cdc} with $(i,j)\in P_\bd$.
\end{prop}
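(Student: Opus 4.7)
The plan is to imitate verbatim the proof of Theorem \ref{cells} and Corollaries \ref{dc}, \ref{cdc}, but restricting the inductive construction to the index set $P_\bd$. The crucial structural observation I need is that $P_\bd$ is \emph{upward-closed} in the following sense: if $(i,j)\in P_\bd$ (meaning the interval $[i,j]$ contains some $d_l$), then $(i-1,j)$ and $(i,j+1)$ are also in $P_\bd$ whenever these are valid index pairs. This guarantees that, in the decreasing induction on $j-i$, the constraints appearing at a pair $(i,j)\in P_\bd$ involve only subspaces already constructed, i.e.\ with strictly larger $j-i$.

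First I would note that the forgetful map $R_n\twoheadrightarrow R_\bd$ is $G^a$-equivariant, so $R_\bd$ is $G^a$-invariant inside $\prod_{(i,j)\in P_\bd}Gr(i,W_{i,j})$. A torus-fixed collection $\prod p(S_{i,j})$ lies in $R_\bd$ if and only if $\bS$ is $\bd$-admissible, and this will give part (1) once I know that each piece $R_\bd\cap\prod_{(i,j)\in P_\bd} C(S_{i,j})$ is a (non-empty) affine cell.

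Second, for the cell structure and the dimension formula, I would rerun the coordinate construction of Theorem \ref{cells}: decreasing induction on $j-i$, with the same four-case analysis ($i=1$ and $j=n-1$; $i=1$; $j=n-1$; interior), but only over $(i,j)\in P_\bd$. Upward-closedness of $P_\bd$ ensures that each neighboring subspace invoked in the casework either lies in $P_\bd$ (hence has already been constructed) or does not participate in the definition of $R_\bd$ at all. The local analysis therefore transfers unchanged, and the same $g_{i,j}$ contributions appear, now summed over $(i,j)\in P_\bd$. For part (2), the diagonal indices $(d_l,d_l)$ automatically belong to $P_\bd$, so the composition with the projection onto the diagonal components $(V_{d_l,d_l})$ is cellular via the identification of the cells of $\Fl^a_\bd$ in Proposition \ref{propgen}. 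The fibre coordinates, indexed by the off-diagonal pairs $(i,j)\in P_\bd$ with $i<j$, supply the $h_{i,j}$ formula for the relative dimension in part (3).

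The main, and really only, obstacle is bookkeeping: one must verify case by case that whenever an inductive step in the proof of Theorem \ref{cells} uses a subspace $V_{i',j'}$, either $(i',j')\in P_\bd$ (and so was handled at an earlier stage of the induction) or $(i',j')\notin P_\bd$ and the corresponding constraint is absent from the definition of $R_\bd$. Both alternatives are controlled by the upward-closedness of $P_\bd$ together with the precise form of conditions (iii)--(iv) in the definition of $R_\bd$, making the verification routine rather than genuinely hard.
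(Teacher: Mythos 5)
Your proposal is correct and follows essentially the argument the paper intends: the paper gives no separate proof, asserting the proposition as the "obvious parabolic analogue" of Theorem \ref{cells} and Corollaries \ref{dc}, \ref{cdc}, and your observation that $P_\bd$ is upward-closed (if $(i,j)\in P_\bd$ then $(i-1,j),(i,j+1)\in P_\bd$ whenever valid) is exactly the point that lets the decreasing induction on $j-i$ and its four-case coordinate construction carry over verbatim, yielding the cells, the cellularity of $R_\bd\to\Fl^a_\bd$, and the $g_{i,j}$, $h_{i,j}$ counts restricted to $(i,j)\in P_\bd$.
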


\section{Normality}
\subsection{Complete flag varieties.}
We first construct a quiver realization of the complete degenerate flag
varieties. Let $W_1, \dots, W_{n-1}, W_n$ be a collection of fixed spaces with $\dim W_i=i$.
Additionally, we fix a basis $e_1,\dots,e_n$ in $W_n$ and the projections $pr_k$ along $e_k$.
We now construct an affine scheme $Q_n$ as follows. A point of $Q_n$ is
a collection of linear maps
$$A_i: W_i\to W_n,\ i=1,\dots,n-1,\qquad  B_j: W_j\to W_{j+1},\ j=1,\dots,n-2$$
subject to the relations
\begin{equation}\label{cd}
A_{i+1}B_i=pr_{i+1}A_i,\ i=1,\dots,n-2.
\end{equation}
The following picture illustrates  the construction:
$$
\begin{picture}(250,80)
\put(10,0){$W_1$}
\put(30,3){\vector(1,0){30}}
\put(70,0){$W_2$}
\multiput(100,0)(5,0){10}{\circle*{1}}
\put(160,0){$W_{n-2}$}
\put(190,3){\vector(1,0){45}}
\put(240,0){$W_{n-1}$}

\put(10,50){$W_n$}
\put(70,50){$W_n$}
\put(160,50){$W_n$}
\put(240,50){$W_n$}
\put(30,53){\vector(1,0){30}}
\put(185,53){\vector(1,0){50}}
\multiput(100,50)(5,0){10}{\circle*{1}}

\put(0,24){$A_1$}
\put(75,24){$A_2$}
\put(166,24){$A_{n-2}$}
\put(245,24){$A_{n-1}$}
\put(40,6){$B_1$}
\put(200,6){$B_{n-2}$}

\put(15,14){\vector(0,1){31}}
\put(75,14){\vector(0,1){31}}
\put(165,14){\vector(0,1){31}}
\put(245,14){\vector(0,1){31}}

\put(38,57){$pr_2$}
\put(198,57){$pr_{n-1}$}
\end{picture}
$$

We also consider an open part $Q_n^\circ\subset Q_n$ consisting of collections $(A_i, B_j)$
such that $\ker A_i=0$ for all $i$.  
The group $\Gamma=\prod_{i=1}^{n-1} GL(W_i)$ acts freely on $Q^\circ_n$
via the change of bases. Consider the map 
\[
Q^\circ_{n}\to \Fl^a_{n},\quad (A_i,B_j)\mapsto (\mathrm{Im} A_1,\dots ,\mathrm{Im} A_{n-1}). 
\]

\begin{lem}
The map $Q^\circ_{n}\to \Fl^a_{n}$ is locally trivial $\Gamma$-torsor in the Zariski topology.
The dimension of $Q^\circ_n$ (and thus of $Q_n$) is equal to $n(n-1)/2 + 1^2+2^2+\ldots+(n-1)^2$.
\end{lem}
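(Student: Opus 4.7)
The plan is to first check the morphism $Q_n^\circ \to \Fl_n^a$ is well-defined and $\Gamma$-invariant, then identify the fibers as $\Gamma$-torsors, establish Zariski-local triviality, and finally read off the dimensions.

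For $(A_i, B_j) \in Q_n^\circ$, injectivity of each $A_i$ gives $\dim \mathrm{Im}(A_i) = i$, and the relation $A_{i+1}B_i = pr_{i+1} A_i$ yields $pr_{i+1}(\mathrm{Im} A_i) \subset \mathrm{Im}(A_{i+1})$, so the tuple $(\mathrm{Im} A_1, \ldots, \mathrm{Im} A_{n-1})$ lies in $\Fl_n^a$ by Proposition~\ref{prop}(2). The action $(g_1,\ldots,g_{n-1}) \cdot (A_i, B_j) = (A_i g_i^{-1}, g_{j+1} B_j g_j^{-1})$ preserves the defining relations and the subspaces $\mathrm{Im}(A_i)$, so the map is $\Gamma$-invariant, and freeness on $Q_n^\circ$ follows from injectivity of each $A_i$.

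Next I would identify each fiber with a single $\Gamma$-orbit: given $(V_\bullet) \in \Fl_n^a$, a point of the fiber over it is a choice of isomorphism $A_i: W_i \xrightarrow{\sim} V_i$ together with maps $B_j$; but the inclusion $pr_{j+1}(V_j) \subset V_{j+1}$ combined with injectivity of $A_{j+1}$ forces $B_j$ to be the unique map with $A_{j+1} B_j = pr_{j+1} A_j$. Hence the fiber is precisely the $\Gamma$-torsor of frames. Zariski-local triviality then reduces to producing a local frame of the direct sum $\bigoplus_i \mathcal V_i$ of tautological bundles on $\Fl_n^a \hookrightarrow \prod_i Gr(i,n)$; such frames exist Zariski-locally on each Grassmannian factor (e.g.\ via the standard Pl\"ucker affine charts) and restrict to give local sections over $\Fl_n^a$.

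From the torsor structure,
$$\dim Q_n^\circ \;=\; \dim \Fl_n^a \,+\, \dim \Gamma \;=\; \frac{n(n-1)}{2} + \sum_{i=1}^{n-1} i^2,$$
matching the stated formula. The main obstacle is the parenthetical equality $\dim Q_n = \dim Q_n^\circ$, since openness of $Q_n^\circ$ in $Q_n$ only yields $\dim Q_n \geq \dim Q_n^\circ$. I would address this by stratifying $Q_n$ according to the ranks of the $A_i$: on a stratum where some $A_i$ has positive corank, the standard dimension formula $r(i+n-r)$ for rank-$r$ matrices in $\mathrm{Hom}(W_i,W_n)$ shows a loss of at least $n-i+1$ parameters in $A_i$, while the extra freedom gained for $B_i$ from failure of injectivity of $A_{i+1}$ is strictly less, so each non-generic stratum has strictly smaller dimension. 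Consequently $Q_n^\circ$ is dense in $Q_n$ and the two dimensions agree.
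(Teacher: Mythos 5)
Your main argument is the paper's own: you identify the fiber over a point of $\Fl^a_n$ with the set of frames of the subspaces $V_i$ (the maps $B_j$ being uniquely determined, since $A_{j+1}$ is injective and $pr_{j+1}(V_j)\subset V_{j+1}$), and you obtain Zariski-local sections by trivializing the tautological bundles on the Grassmannian factors of $\prod_d Gr(d,n)$ and restricting to $\Fl^a_n$; the count $\dim Q_n^\circ=\dim\Fl^a_n+\dim\Gamma=n(n-1)/2+\sum_{i=1}^{n-1}i^2$ is then identical to the paper's. The additional details on well-definedness, $\Gamma$-invariance and freeness are fine.

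The last paragraph, however, does not work as written. First there is an index slip: the kernel governing the freedom in $B_i$ is that of $A_{i+1}$, while the loss $n-i+1$ you quote is for a corank-one $A_i$. More seriously, the numerical comparison is false: if $A_{i+1}$ has corank $1$, the rank stratum in $\Hom(W_{i+1},W_n)$ loses $(i+1)n-i(n+1)=n-i$ parameters, whereas the solutions of $A_{i+1}B_i=pr_{i+1}A_i$ (when they exist) form a torsor over $\Hom(W_i,\ker A_{i+1})$, of dimension $i$; for $2i>n$ (already $n=5$, $i=3$: gain $3$ versus loss $2$) the gain exceeds the loss, so "strictly less" fails. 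The reason the strata are nevertheless small is that a drop in $\mathrm{rk}\,A_{i+1}$ also shrinks $\mathrm{Im}\,A_{i+1}$ and hence constrains $pr_{i+1}A_i$ (and, propagating down the chain, all the earlier $A_k$), and a correct stratification argument has to carry out this bookkeeping; alternatively one must exclude, by some other means, an irreducible component of $Q_n$ lying entirely in the non-injective locus (recall every component automatically has dimension at least $\dim Q_n^\circ$, since $Q_n$ is cut out of the affine space \eqref{affine} by $\sum_{i=1}^{n-2}ni$ equations). For what it is worth, the paper itself states $\dim Q_n^\circ=\dim Q_n$ without further argument, so you correctly spotted the point that needs justification — but the one-line comparison you propose would not close it.
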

\begin{proof}
Consider the embedding $\Fl^a_{n}\hk \prod_{d=1}^{n-1} Gr(d,n)$. 
For a point $p\in \Fl^a_{n}$ let $U\ni p$ be an open part of $\prod_{d=1}^{n-1} Gr(d,n)$
such that all tautological bundles on Grassmannians are trivial on $U$. Let 
$U'=U\cap \Fl^a_{n}$. Then on $U'$ the map $Q^\circ_{n}\to \Fl^a_{n}$ has a section.
Now using the $\Gamma$ action on $Q_{n}$ we obtain that $Q^\circ_n\to \Fl^a_{n}$ is 
$\Gamma$-torsor. In particular, $\dim Q^\circ_n=\dim Q_n=\dim \Fl^a_n + \dim \Gamma$.
\end{proof}

We note that $Q_n$ is a subscheme in the affine
space
\begin{equation}\label{affine}
\prod_{i=1}^{n-1} Hom(W_i,W_n)\times \prod_{i=1}^{n-2} Hom(W_i,W_{i+1}).
\end{equation}

\begin{lem}
$Q_n$ is a complete intersection.
\end{lem}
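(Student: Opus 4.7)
The plan is to verify by a direct dimension count that the defining relations \eqref{cd} have the expected codimension in the ambient affine space, and then to invoke Krull's Hauptidealsatz together with the Cohen--Macaulayness of the polynomial ring. The ambient affine space \eqref{affine} has dimension
\[
N \;=\; \sum_{i=1}^{n-1} in \;+\; \sum_{i=1}^{n-2} i(i+1).
\]
For each $i=1,\ldots,n-2$ the relation $A_{i+1}B_i = pr_{i+1}A_i$ is a matrix equation in $\Hom(W_i,W_n)$ and hence contributes $in$ scalar equations, so $Q_n$ is cut out by exactly $r = \sum_{i=1}^{n-2} in = n\binom{n-1}{2}$ equations.

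An elementary arithmetic manipulation (rearranging $N - r$ as $n(n-1) + \sum_{i=1}^{n-2} i^2 + (n-2)(n-1)/2$ and recognizing this as $\tfrac{n(n-1)}{2} + \sum_{i=1}^{n-1} i^2$) shows that $N-r$ coincides with $\dim Q_n$ as computed in the previous lemma. Krull's principal ideal theorem then guarantees that every irreducible component of $Q_n \subset \mathbb{A}^N$ has codimension at most $r$, i.e.\ dimension at least $N-r$. Combined with the equality $\dim Q_n = N-r$ (which bounds the dimension of every component from above by $N-r$), this forces every component of $Q_n$ to have dimension exactly $N-r$. Since the ambient polynomial ring is Cohen--Macaulay, an ideal generated by $r$ elements whose zero scheme is equidimensional of codimension $r$ is automatically a complete intersection: the generators form a regular sequence by unmixedness, proving the lemma.

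The main conceptual input is already absorbed in the preceding lemma: one must know that $\dim Q_n$ equals $\dim Q_n^\circ$, and not something larger, i.e.\ that no irreducible component of $Q_n$ is entirely contained in the degenerate locus where some $A_i$ has a nontrivial kernel. Granted the density of $Q_n^\circ$ in $Q_n$, the present statement reduces to the codimension count outlined above and a single appeal to Krull. Were one to be more self-contained, the only nontrivial step would be to stratify $Q_n \setminus Q_n^\circ$ by the ranks of the tuples $(A_i)$ and to check that each stratum has dimension strictly smaller than $N-r$; this is what would be the main obstacle if one wished to extract the lemma without using the preceding torsor computation.
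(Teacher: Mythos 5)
Your proof is correct and follows essentially the same route as the paper: count the $\sum_{i=1}^{n-2} ni$ scalar equations in \eqref{cd} and observe that the ambient dimension \eqref{affine} minus this number equals $\dim Q_n$ as computed in the preceding lemma. The paper leaves the Krull/unmixedness bookkeeping and the reliance on the density of $Q_n^\circ$ in $Q_n$ implicit, and you have simply made these standard steps explicit.
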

\begin{proof}
The condition $A_{i+1}B_i=pr_{i+1}A_i$ produces $n\times i$ equations
(the number of equations is equal to $\dim Hom(W_i,W_n)$). Now our lemma follows from the equality
$$
\dim Q_n= \sum_{i=1}^{n-1} ni + \sum_{i=1}^{n-2} i(i+1) -\sum_{i=1}^{n-2} ni.
$$
\end{proof}

\begin{thm}
The degenerate flag varieties $\Fl^a_n$ are normal locally complete
intersections (in particular, Cohen-Macaulay and even Gorenstein).
\end{thm}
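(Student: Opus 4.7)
The plan is to promote properties of the complete intersection $Q_n$ to $\Fl^a_n$ along the Zariski-locally trivial $\Gamma$-torsor $Q^\circ_n \to \Fl^a_n$ established in the preceding lemma. Since being a local complete intersection, Cohen-Macaulay, Gorenstein, and normal are all properties that are smooth-local on the source, each of them for $Q^\circ_n$ transfers to $\Fl^a_n$, and vice versa. So it suffices to establish them for $Q^\circ_n$, and for most of them even for $Q_n$.

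From the preceding lemma, $Q_n$ is a complete intersection in an affine space. Hence $Q_n$ is a local complete intersection, and consequently Cohen-Macaulay and Gorenstein. These properties pass to the open subscheme $Q^\circ_n$ and then descend to $\Fl^a_n$, giving three of the four assertions of the theorem.

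For normality, I would apply Serre's criterion $R_1+S_2$: since $\Fl^a_n$ is Cohen-Macaulay it is $S_2$, so it remains to show that its singular locus has codimension at least two. Equivalently (via the smooth torsor), the singular locus of $Q^\circ_n$ must have codimension at least two. Here I would invoke the desingularization $\pi_n\colon R_n\to \Fl^a_n$ from Section~\ref{decomp}: since $R_n$ is smooth and $\pi_n$ is an isomorphism over the open affine cell $U\subset \Fl^a_n$ of collections $(V_i)$ on which the composition $pr_{i+1}\cdots pr_{n-1}$ is injective on every $V_i$ (see the proof of Proposition~\ref{birat}), the singular locus is contained in $\Fl^a_n\setminus U$.

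The main obstacle is therefore the codimension estimate $\mathrm{codim}_{\Fl^a_n}(\mathrm{Sing}\,\Fl^a_n)\ge 2$. For small $n$ this is automatic from the smallness/semismallness of $\pi_n$ noted in the paper, but a uniform argument requires more care. Two plausible routes are: (a) apply the Jacobian criterion directly to the complete intersection $Q_n$, showing that the differential of the defining relations $A_{i+1}B_i-pr_{i+1}A_i=0$ has full rank off a locus of codimension at least two by a direct linear-algebra computation of the rank drop; or (b) combine the cell decomposition of Theorem~\ref{cells} with the relative-dimension formula of Corollary~\ref{cdc} to check stratum-by-stratum that every codimension-one cell of $\Fl^a_n$ is contained in $U$ (i.e.\ has trivial fiber under $\pi_n$), so that the singular locus is a union of cells of codimension at least two. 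Either path is combinatorial and essentially routine once set up.
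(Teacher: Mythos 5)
Your proposal follows essentially the same route as the paper: transfer along the $\Gamma$-torsor $Q^\circ_n\to\Fl^a_n$, the complete-intersection presentation of $Q_n$, Serre-type criteria (the paper cites EGA IV, 5.8.5--5.8.6), and the reduction to showing the singular locus of $\Fl^a_n$ has codimension at least two, which the paper settles exactly by your route (b) --- Lemma \ref{smooth}, where the codimension-one cells are listed explicitly in \eqref{codim1} using Corollary \ref{dc} and $\pi_n$ is checked to be an isomorphism on them using Corollary \ref{cdc}. The only caveats are that you leave this last combinatorial verification as a sketch (it is the one substantive step, though short), and that your phrase ``every codimension-one cell of $\Fl^a_n$ is contained in $U$'' should instead say ``lies in the locus over which $\pi_n$ is one-to-one,'' since those cells are disjoint from the open cell $U$.
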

\begin{proof}
Since $Q^\circ_n\to \Fl^a_n$ is a torsor, it suffices to prove that 
$Q^\circ_n$ is a normal reduced scheme (i.e. a variety).
Since $Q^\circ_n$ is locally complete intersection, the property of being reduced
(resp. normality) of $Q^\circ_n$ follows from the fact
that the singularities of $Q^\circ_n$ are contained in the subvariety of codimension
at least two by the virtue of~Proposition~5.8.5 (resp.~Theorem~5.8.6)
of~\cite{ega}. Using again that $Q^\circ_n\to \Fl^a_n$ is a torsor,
it suffices to prove that
the codimension of the variety of singular points of $\Fl^a_n$ is at least two.
We prove this statement in a separate lemma.
\end{proof}

\begin{lem}\label{smooth}
$\Fl^a_n$ is smooth off codimension two.
\end{lem}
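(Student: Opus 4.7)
The plan is to reduce smoothness off codimension two to a finite tangent-space check via the $G^a$-equivariant cell decomposition. Since $\Fl_n^a$ carries a $G^a$-action and the singular locus is closed and $G^a$-stable, by Proposition~\ref{prop}(3) together with the orbit description of Example~\ref{Gr} (each cell is a $G^a$-orbit), the singular locus is automatically a union of finitely many cells $C(S_1,\ldots,S_{n-1})$. Hence it suffices to show that every cell of codimension at most one is contained in the smooth locus.

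The open cell is the $(N^-)^a$-orbit through $[v_\la]$, which is $\bG_a^M$-equivariantly isomorphic to $\bG_a^M$ with $M=n(n-1)/2$ and therefore smooth. For the cells of codimension exactly one, I would use Proposition~\ref{prop}(3) to enumerate the admissible sequences $(S_1,\ldots,S_{n-1})$ labelling them: these are obtained from the open-cell label by a single minimal combinatorial perturbation (replacing one index in exactly one~$S_d$), which yields a short explicit list. By $T$-equivariance, smoothness at a generic point of such a cell is equivalent to smoothness at its unique $T$-fixed point $p(\bS)=(V_d)_{d=1}^{n-1}$ with $V_d=\spa(w_k:k\in S_d)$.

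At each such $T$-fixed point the ambient tangent space $T_{p(\bS)}\prod_d Gr(d,n)=\bigoplus_d\Hom(V_d,W/V_d)$ is explicit in terms of coordinate matrix entries, and the defining conditions $pr_{d+1}V_d\subset V_{d+1}$ of Proposition~\ref{prop}(2) linearise at $p(\bS)$ to a concrete homogeneous linear system on $\bigoplus_d\Hom(V_d,W/V_d)$. I would write out this system at each codim-one fixed point, count independent constraints, and check that the resulting tangent space has dimension exactly $n(n-1)/2=\dim\Fl_n^a$. An alternative route uses the quiver description developed in this section: since $Q_n^\circ\to\Fl_n^a$ is a $\Gamma$-torsor and $Q_n$ is a complete intersection, smoothness at $p$ is equivalent to the Jacobian of $A_{i+1}B_i-pr_{i+1}A_i=0$ having full rank at a lift, and the rank-drop locus on $Q_n^\circ$ can be cut out by explicit determinantal conditions whose codimension is then estimated from below by $2$.

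The main obstacle is the combinatorial bookkeeping at the codim-one fixed points: enumerating the admissible perturbations of the open-cell label and tracking which linearised conditions change. However, because only one coordinate of the admissible sequence moves relative to the open cell, the linear-algebra check is local, essentially uniform across the list, and the dimension count should work out directly.
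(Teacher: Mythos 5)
Your reduction to a pointwise check at torus fixed points is a genuinely different route from the paper's, which instead uses the desingularization $\pi_n:R_n\to\Fl^a_n$ together with the cell-dimension formulas of Corollaries \ref{dc} and \ref{cdc} to show that $\pi_n$ is an isomorphism over the codimension-one cells; your route could in principle work, but as written it fails at the decisive combinatorial step. The codimension-one cells of $\Fl^a_n$ are \emph{not} obtained from the open-cell label $S_d=\{1,\dots,d\}$ by changing a single index in a single $S_d$: as in \eqref{codim1} (restricted to the diagonal entries) they are labeled by pairs $1\le a\le b\le n-1$, with $S_i=\{1,\dots,i\}$ for $i<a$ or $i>b$ and $S_i=\{1,\dots,i\}\setminus\{a\}\cup\{b+1\}$ for $a\le i\le b$, so for $a<b$ several $S_i$ change simultaneously. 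Already for $n=3$ the two-dimensional cell with $S_1=\{3\}$, $S_2=\{2,3\}$ is missed by your list, and a singular divisor could a priori be the closure of precisely such a missed cell, so a tangent-space check at the fixed points you do enumerate cannot conclude. Moreover, Proposition \ref{prop}(3) gives no cell dimensions, so your proposal contains no criterion for deciding which admissible collections label codimension-one cells; this is exactly the information Corollaries \ref{dc} and \ref{cdc} provide in the paper and which your argument would have to re-derive.

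Two further points. Your justification that the singular locus is a union of cells rests on the claim that every cell of $\Fl^a_n$ is a single $G^a$-orbit; Example \ref{Gr} establishes this only for Grassmannians, and Proposition \ref{prop}(3) does not assert it for $\Fl^a_n$. The reduction can be repaired without that claim: if the singular locus contained a divisor $D$, then $D$ would meet some irreducible cell $C$ of codimension at most one in a dense subset, hence (being closed) contain $C$ and in particular its torus fixed point; so verifying smoothness at the fixed point of every codimension-one cell does suffice --- but then the correct list of those cells is indispensable. Finally, the actual mathematical content --- the tangent-space count at each such fixed point (or, in your quiver variant, a lower bound on the codimension of the locus where the Jacobian of the relations \eqref{cd} drops rank) --- is only announced, not carried out; since this computation over the correct set of $\binom{n}{2}$ fixed points is essentially the whole proof, the proposal as it stands is incomplete.
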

\begin{proof}
There are two ways to prove the statement. The first one uses the representation theory of quivers and
is worked out in \cite{CFR}, Theorem 4.5. The second way is more direct. Namely, let us
use the desingularization $\pi_n:R_n\to \Fl^a_n$. Since $R_n$ is smooth, it
suffices to show that that the map $\pi_n$ is an isomorphism on all cells of (complex) codimension one.
Dimension counting from Corollary \ref{dc} implies that the codimension one cells are labeled by
pairs $1\le a\le b\le n-1$ and the collection $\bS=(S_{i,j})$ corresponding to a pair $(a,b)$  is as follows:
\begin{equation}\label{codim1}
S_{i,j}=
\begin{cases}
\{1,2,\dots,i\} \text{ if } (i<a \text{ or } j>b),\\
\{1,2,\dots,i\}\setminus \{ a\} \cup \{b+1\}, \text{otherwise.}
\end{cases}
\end{equation}
It is easy to see from Corollary \ref{cdc} that the resolution map $\pi_n$
is an isomorphism on such cells.
\end{proof}


\subsection{Parabolic flag varieties.}
Our goal is to generalize the results from the previous subsection to the case of the general parabolic
degenerate flag varieties. So let $\bd=(d_1,\dots,d_k)$ be a collection with $1\le d_1<\dots <d_k\le n$.
We define an affine scheme $Q_\bd$ as follows. As above, we fix the spaces $W_{d_i}$, $i=1,\dots,k$ 
with $\dim W_{d_i}=d_i$. A point of $Q_\bd$ is
a collection of linear maps
$$A_i: W_{d_i}\to W_n,\ i=1,\dots,k,\qquad  B_j: W_{d_j}\to W_{d_{j+1}},\ j=1,\dots,k-1$$
subject to the relations
\begin{equation}\label{cd1}
A_{i+1}B_i=pr_{d_i+1}\dots pr_{d_{i+1}}A_i,\ i=1,\dots,n-2.
\end{equation}
We also define $Q_\bd^\circ\subset Q_\bd$ as an open part defined by the conditions $\ker A_i=0$ for
all $i$. 
The group $\Gamma_\bd=\prod_{i=1}^{k} GL(W_{d_i})$ acts freely on $Q^\circ_\bd$
via the change of bases and, as in the case of the complete flag varieties,  
$Q^\circ_\bd/\Gamma_\bd\simeq \Fl^a_\bd$, i.e. $Q^\circ_\bd$ is a
$\Gamma_\bd$-torsor over $\Fl^a_\bd$.
Moreover,  explicit computation as above shows that
$Q_\bd$  is a complete intersection.
Now the following theorem holds:
\begin{thm}
The degenerate flag varieties $\Fl^a_\bd$ are normal locally complete
intersections (in particular, Cohen-Macaulay and even Gorenstein).
\end{thm}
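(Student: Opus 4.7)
The plan is to imitate the proof for the complete flag variety almost verbatim, with $Q_n$, $R_n$ and $\Gamma$ replaced by their parabolic analogues $Q_\bd$, $R_\bd$ and $\Gamma_\bd$. Since $Q_\bd^\circ \to \Fl^a_\bd$ is a $\Gamma_\bd$-torsor (as stated in the paragraph preceding the theorem), the properties of being reduced and normal descend along this torsor, so it suffices to establish them for $Q_\bd^\circ$. The text already asserts that $Q_\bd$ is a complete intersection; a dimension count parallel to the one in the complete case (the relations \eqref{cd1} cut out the expected codimension in the ambient affine space of maps) confirms this. Consequently $Q_\bd^\circ$ is a local complete intersection, hence automatically Cohen--Macaulay and Gorenstein. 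By Proposition~5.8.5 and Theorem~5.8.6 of~\cite{ega}, reducedness and normality then follow once the singular locus of $Q_\bd^\circ$ has codimension at least two, which, again using the torsor, reduces to showing that the singular locus of $\Fl^a_\bd$ has codimension at least two.

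To bound the codimension of the singular locus, I would use the parabolic desingularization $\pi_\bd:R_\bd\to \Fl^a_\bd$ constructed in Section~\ref{decomp} together with the parabolic cell decomposition from the last proposition of Section~\ref{decomp}. Since $R_\bd$ is smooth, it suffices to check that $\pi_\bd$ is an isomorphism over every cell of $\Fl^a_\bd$ of complex codimension one. By the parabolic analogues of Corollaries~\ref{dc} and~\ref{cdc}, both the absolute dimensions $g_{i,j}$ and the relative dimensions $h_{i,j}$ are computed by the same combinatorial rules as in the complete case, only restricted to indices $(i,j)\in P_\bd$. Thus an admissible collection $\bS=(S_{i,j})_{(i,j)\in P_\bd}$ corresponds to a codimension-one cell precisely when exactly one of the contributions $g_{i,j}$ vanishes; as in the proof of Lemma~\ref{smooth}, these codimension-one strata are parametrized by pairs $(a,b)$ with the appropriate parabolic constraints, and the corresponding $\bS$ has the same shape as in~\eqref{codim1} but indexed only by $P_\bd$.

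The main (and really only) obstacle is purely bookkeeping: one must verify that no new type of codimension-one cell is introduced by restricting the set of indices to $P_\bd$, and that on each such cell every term $h_{i,j}$ with $(i,j)\in P_\bd$ equals $1$, so that $\pi_\bd$ has zero relative dimension there. This amounts to inspecting the rule in the parabolic version of Corollary~\ref{cdc}: on a codimension-one stratum of the form~\eqref{codim1}, for each off-diagonal $(i,j)\in P_\bd$ one has either $j\notin S_{i,j-1}$ or $\ol{m-j}>\ol{l-j}$, so $h_{i,j}=1$ by direct inspection of the two possible local configurations $(a,b)$ produces in $S_{i,j-1},S_{i,j},S_{i+1,j}$. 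With this checked, $\pi_\bd$ is an isomorphism on codimension-one cells, $\Fl^a_\bd$ is smooth off codimension two, and the chain of implications above yields normality and the LCI/Gorenstein conclusion.
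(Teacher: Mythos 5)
Your reduction steps (the $\Gamma_\bd$-torsor $Q^\circ_\bd\to\Fl^a_\bd$, the complete-intersection count for $Q_\bd$, and the appeal to EGA 5.8.5/5.8.6 to reduce everything to showing that $\Fl^a_\bd$ is smooth off codimension two) coincide with the paper's. The divergence, and the genuine gap, is in that last geometric step. The paper does \emph{not} establish smoothness off codimension two via $R_\bd$: in the proof of Proposition \ref{Yd} it states explicitly that ``$R_\bd$ does not do the job (it is too big)'', and instead introduces a smaller desingularization $Y_\bd$, built only from the subspaces $V_{d_i,d_j}$ with $1\le i\le j\le k$, realized as a tower of Grassmannian fibrations; its codimension-one cells are listed (they are labeled by pairs $1\le a\le b\le k$, with the deleted and added entries being $d_{a-1}+1$ and $d_{b+1}$, not the naive restriction of \eqref{codim1} to $P_\bd$), and $\tau_\bd$ is checked to be an isomorphism on them. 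Your argument rests on exactly the claim the authors reject, namely that the forgetful map $R_\bd\to\Fl^a_\bd$ is one-to-one over all strata of codimension at most one, and you dismiss its verification as ``purely bookkeeping''. The danger is that $R_\bd$ carries many interpolating subspaces $V_{i,j}$ with $i\notin\{d_1,\dots,d_k\}$ or with $j$ inside the gaps between consecutive $d_l$'s, and over degenerate loci of $\Fl^a_\bd$ (where projections drop rank or incidence conditions become vacuous and the fibers of $\Fl^a_\bd\to Gr(d_l,n)$ jump) these entries need no longer be pinned down by the diagonal data. So the statement you need is at best a nontrivial claim requiring proof, and according to the paper it fails in general; you would either have to supply that proof or switch to $Y_\bd$ as the paper does.

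There is also a concrete error in the criterion you apply. By Corollary \ref{cdc} the relative dimension of a cell $C(\bS)$ over its image is the \emph{sum} of the terms $h_{i,j}$, so the map is an isomorphism on a cell precisely when every $h_{i,j}$ vanishes, i.e.\ when for each off-diagonal pair one has $\ol{m-j}<\ol{l-j}$ \emph{and} $j\in S_{i,j-1}$. You argue the opposite: you check that each $h_{i,j}$ equals $1$ and conclude that the relative dimension is zero. Were your computation correct, it would show that the fiber over the codimension-one stratum has dimension equal to the number of off-diagonal roots in $P_\bd$, i.e.\ that the map is very far from an isomorphism there; it also contradicts the complete case, where the isomorphism asserted in Lemma \ref{smooth} on the cells \eqref{codim1} forces all $h_{i,j}=0$. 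So both the criterion and the claimed outcome of the bookkeeping must be redone; the route actually taken in the paper (the refined desingularization $Y_\bd$ together with its explicit codimension-one cells) is the way this is resolved.
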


Again, as in the complete case, we only need to prove that each variety
$\Fl^a_\bd$ is smooth outside of the codimension two subvariety. This is proved for a wider class
of varieties in \cite{CFR}. Here we present a more direct proof. 

\begin{prop}\label{Yd}
$\Fl^a_\bd$ is smooth off codimension two.
\end{prop}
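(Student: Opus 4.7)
The plan is to mimic the strategy of Lemma \ref{smooth} in the parabolic setting. Since we have at our disposal the desingularization $\pi_\bd\colon R_\bd\to\Fl^a_\bd$ with $R_\bd$ smooth, it suffices to exhibit an open subset $U\subset \Fl^a_\bd$ whose complement has codimension at least two, such that $\pi_\bd$ is an isomorphism over $U$. Equivalently, using the cell decompositions, it is enough to show that $\pi_\bd$ restricts to an isomorphism on every cell of $\Fl^a_\bd$ of complex codimension one. The cells and fibers are controlled by the parabolic analogues of Corollaries \ref{dc} and \ref{cdc}, which give completely combinatorial formulas for $\dim C(\bS)$ and for the relative dimension $\dim C(\bS)-\dim C(S_{d_1,d_1},\dots,S_{d_k,d_k})$ as sums of zero-one terms $g_{i,j}$ and $h_{i,j}$ indexed by $(i,j)\in P_\bd$.

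First I would compute the dimension of $\Fl^a_\bd$ by taking the "standard" admissible collection $\bS^{\max}$ with $S_{i,j}^{\max}=\{1,2,\dots,i\}$ whenever $(i,j)\in P_\bd$; a short inspection of the four bullets of Corollary \ref{dc} (restricted to $P_\bd$) shows that every $g_{i,j}$ for $\bS^{\max}$ equals one, giving the top cell. Next I would enumerate the codimension-one cells. As in the proof of Lemma \ref{smooth}, a codimension-one $\bd$-admissible collection must differ from $\bS^{\max}$ in a minimal way; the combinatorics (which extends essentially verbatim from the complete case) shows that such collections are parameterized by pairs $1\le a\le b\le n-1$ for which the "rectangle" of indices $(i,j)\in P_\bd$ with $i\ge a$ and $j\le b$ is nonempty, and the collection is
\begin{equation*}
S_{i,j}=
\begin{cases}
\{1,\dots,i\} & \text{if } i<a \text{ or } j>b,\\
\{1,\dots,i\}\setminus\{a\}\cup\{b+1\} & \text{otherwise,}
\end{cases}
\end{equation*}
restricted of course to $(i,j)\in P_\bd$.

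Finally I would verify, for each such collection, that every $h_{i,j}$ with $(i,j)\in P_\bd$, $i<j$ vanishes. For this one checks the criterion in the parabolic analogue of Corollary \ref{cdc}: on pairs $(i,j)$ outside the modified rectangle the sets $S_{i,j}$ are standard and the corresponding $l,m$ satisfy $\ol{m-j}>\ol{l-j}$ or $j\notin S_{i,j-1}$ trivially; on pairs inside the rectangle a direct computation of $l=S_{i,j}\setminus S_{i,j-1}$ and $m=S_{i+1,j}\setminus S_{i,j}$ shows that in both admissible configurations the required inequality $\ol{m-j}<\ol{l-j}$ together with $j\in S_{i,j-1}$ holds. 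Thus the fiber of $\pi_\bd$ over every codimension-one cell is a single point, and hence an isomorphism, completing the proof.

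The main obstacle I anticipate is purely bookkeeping: one must carefully verify that when restricting the combinatorics of Corollary \ref{dc} and \ref{cdc} to the index set $P_\bd$, the enumeration of codimension-one cells still reduces to a single family indexed by pairs $(a,b)$, and that the $h_{i,j}=0$ check survives the restriction. No genuinely new phenomenon should appear, but a clean argument requires a uniform treatment of the boundary cases $i=1$, $j=n-1$, and the case where $(i,j)$ sits at the edge of $P_\bd$ (where the relevant $V_{i,j\pm 1}$ or $V_{i\pm 1,j}$ may or may not lie in $P_\bd$). An alternative, perhaps cleaner, route would be the quiver-theoretic argument of \cite{CFR}, Theorem~4.5, which the authors already invoke.
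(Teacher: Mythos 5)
Your reduction (``it suffices to show the map is an isomorphism over every codimension-one cell'') is the right one, but the execution has a genuine gap, and it is exactly the one the paper flags when it says that $R_\bd$ ``does not do the job (it is too big)''. For a proper subset $\bd$, the index set $P_\bd$ is missing many of the constraints present in the complete case, and as a result $\pi_\bd\colon R_\bd\to\Fl^a_\bd$ \emph{contracts divisors}: $R_\bd$ has codimension-one cells that are not of the restricted form \eqref{codim1} and that have nonzero relative dimension $\sum h_{i,j}$ over their (codimension $\ge 2$) image cells. A concrete example is $n=4$, $\bd=(2)$, so $\Fl^a_\bd=Gr(2,4)$ and $P_\bd=\{(1,2),(1,3),(2,2),(2,3)\}$: over a generic point of the two-dimensional locus $\{V_{2,2}\ni w_3\}\subset Gr(2,4)$ one has $V_{1,2}=\bC w_3$ forced, but $V_{2,3}$ may be any plane in $W_{2,3}=\spa(w_1,w_2,w_4)$ containing the line $pr_3V_{2,2}$, a $\bP^1$ of choices; the preimage is therefore a divisor in the four-dimensional $R_{(2)}$ lying over a codimension-two locus. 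So your claimed enumeration of codimension-one cells of $R_\bd$ by pairs $(a,b)$ is incomplete, and the uniform check ``every $h_{i,j}$ vanishes'' fails for the extra cells; this is not bookkeeping at the boundary of $P_\bd$ but a new phenomenon there. (One could in principle still salvage the argument with $R_\bd$, since the contracted cells sit over codimension $\ge 2$ loci, but then one must instead classify the codimension-one cells of $\Fl^a_\bd$ itself and show each has a single cell of $R_\bd$ over it mapping bijectively --- a noticeably different and more delicate analysis than the one you sketch.)

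The paper avoids this by building a \emph{smaller} resolution $Y_\bd$, using only the subspaces $V_{d_i,d_j}$ indexed by pairs from $\bd$; it is a tower of Grassmannian (not $\bP^1$) fibrations, its codimension-one cells are labeled by pairs $1\le a\le b\le k$ with the modification $\{1,\dots,d_i\}\setminus\{d_{a-1}+1\}\cup\{d_{b+1}\}$ (note the labels run over $\bd$, not over $1\le a\le b\le n-1$ as in your list), and $\tau_\bd$ is an isomorphism on all of them. This yields the stronger statement that $\tau_\bd$ is one-to-one off codimension two \emph{in $Y_\bd$}, i.e.\ no divisor is contracted, which is precisely what is reused later for the crepancy $K_{Y_\bd}=\tau_\bd^*K_{\Fl^a_\bd}$ and the rational-singularities argument; $R_\bd$ cannot serve that purpose, by the example above. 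Your fallback via \cite{CFR}, Theorem 4.5, is of course legitimate, as the paper itself notes.
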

\begin{proof}
As in Lemma \ref{smooth} it suffices to construct a desingularization $Y_\bd$ of $\Fl^a_\bd$
such that the map $\tau_\bd:Y_\bd\to \Fl^a_\bd$ is one-to-one off codimension two. Unfortunately, $R_\bd$ does not do the
job (it is too big). We refine it in the following way. Let $Y_\bd$ be the variety of subspaces
$V_{d_i,d_j}$, $1\le i\le j\le k$ satisfying the following properties:
\begin{gather*}
\dim V_{d_i,d_j}=d_i,\quad V_{d_i,d_j}\subset W_{d_i,d_j},\\
V_{d_i,d_j}\subset V_{d_{i+1},d_j},\quad
pr_{d_j+1}\dots pr_{d_{j+1}} V_{d_i,d_j}\subset V_{d_i,d_{j+1}}.
\end{gather*}
The projection map $\tau_\bd$ is defined by $(V_{d_i,d_j})_{i,j=1}^k\mapsto (V_{d_i,d_i})_{i=1}^k$
(i.e. simply forgetting the off-diagonal entries).

The varieties $Y_\bd$ are smooth and can be
viewed as towers of fibrations with fibers isomorphic to the Grassmann
varieties. More precisely,
these towers are constructed as follows. First, the subspace
$V_{d_1,d_k}$ varies in $Gr(d_1,W_{d_1,d_k})$.
Second, we consider $V_{d_1,d_{k-1}}$ and $V_{d_2,d_k}$. For the former,
the only condition is
\[
V_{d_1,d_{k-1}}\subset V_{d_1,d_{k}}\oplus\spa(w_{d_{k-1}+1},\dots,w_{d_k}),
\]
which produces the fibration over $Gr(d_1,W_{d_1,d_k})$ with a fiber
$Gr(d_1,d_1+d_k-d_{k-1})$. Now the conditions for
$V_{d_2,d_k}$ are $V_{d_1,d_k}\subset V_{d_2,d_k}\subset W_{d_2,d_k}$,
producing a fibration over $Gr(d_1,W_{d_1,d_k})$ with a fiber
$Gr(d_2-d_1, n-d_k+d_2-d_1)$. Proceeding further, we see that
$Y_\bd$ is a tower of fibrations with fibers being Grassmannians.

As in the case of complete flag varieties, the varieties $Y_\bd$ possess a
cellular decomposition. Namely,
the cells are labeled by collections $\bS=(S_{d_i,d_j})$, $1\le i\le j\le k$ satisfying the usual properties
\begin{gather*}
\# S_{d_i,d_j}=d_i,\quad S_{d_i,d_j}\subset \{1,\dots,d_i,d_{j}+1,n\},\\
S_{d_i,d_j}\subset S_{d_{i+1},d_j}\subset S_{d_{i+1},d_{j+1}}\cup\{d_j+1,\dots,d_{j+1}\}.
\end{gather*}
A cell $C(\bS)$ is defined as the intersection $Y_\bd\cap \prod_{i,j}
C(S_{d_i,d_j})$.
For example, the big cell in $Y_\bd$ is given by
$S_{d_i,d_j}=\{1,\dots,d_i\}$. It is easy to see that $\tau_\bd$ is one-to-one on this cell.
In order to prove the proposition it suffices to show that $\tau_\bd$ is an isomorphism
on all cells of codimension one. Let us describe these cells.

First consider a single Grassmannian $Gr(d,n)$. The unique codimension one cell is
$C(S)$ with $S=\{2,\dots,d,n\}$. Using this observation and the construction of $Y_\bd$ as a tower
of successive fibrations with fibers being Grassmanians, we obtain the following description of
codimension one cells in $Y_\bd$. These cells are labeled by pairs $1\le a\le b\le k$ and
a collection $\bS$ corresponding to such a pair is given by
$$
S_{d_i,d_j}=
\begin{cases}
\{1,2,\dots,d_i\}, \text{  if  } (i<a \text{ or } j>b),\\
\{1,2,\dots,d_i\}\setminus \{d_{a-1}+1\} \cup \{d_{b+1}\}, \text{  otherwise}
\end{cases}
$$
(compare with \eqref{codim1}). It is easy to check that the map $\tau_\bd$ is an isomorphism on such cells.
\end{proof}

\section{Frobenius splitting}
The goal of this section is to show that the varieties $\Fl_n^a$ over $\overline{\mathbb F}_p$ are Frobenius
split. The general references are~\cite{MR},~\cite{BK}.
We first recall the definition. Let  $X$ be an algebraic variety over an algebraically
closed field of characteristic $p>0$.
Let $F:X\to X$ be the Frobenius morphism, i.e. the identity map on the underlying space
$X$ and the $p$-th power map on the space of functions.
Then $X$ is called Frobenius split if there exists a projection $F_*\cO_X\to \cO_X$ such that
the composition $\cO_X\to  F_*\cO_X\to \cO_X$ is the identity map.
The Frobenius split varieties enjoy the following important property (see. e.g.
Proposition $1$ of \cite{MR}):
\begin{prop}
Let $X$ be a Frobenius split projective variety with a line bundle $\cL$ such that for some $i$ and
all large enough $m$  $H^i(X,\cL^m)=0$. Then $H^i(X,\cL)=0$.
\end{prop}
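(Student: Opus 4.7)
The plan is to upgrade the splitting of $\cO_X \to F_*\cO_X$ to a splitting at the level of powers of $\cL$, then iterate until the vanishing hypothesis kicks in.

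First I would apply $-\otimes \cL$ to the splitting
\[
\cO_X \xrightarrow{\ \ } F_*\cO_X \xrightarrow{\ \ } \cO_X.
\]
Using the projection formula together with the fact that $F^*\cL \simeq \cL^p$, the middle term becomes $F_*\cO_X \otimes \cL \simeq F_*(F^*\cL) \simeq F_*(\cL^p)$. Thus I obtain a split injection $\cL \hookrightarrow F_*(\cL^p)$ of $\cO_X$-modules, i.e.\ $\cL$ is a direct summand of $F_*(\cL^p)$.

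Next I would take cohomology. Since $F$ is affine, $H^i(X,F_*\cL^p)=H^i(X,\cL^p)$, so $H^i(X,\cL)$ is a direct summand of $H^i(X,\cL^p)$. Iterating the same argument (applying the original splitting after tensoring with $\cL^{p^{r-1}}$ at the $r$-th step) gives that for every $m\ge 1$ the group $H^i(X,\cL)$ is a direct summand of $H^i(X,\cL^{p^m})$. By hypothesis, for $m$ large enough $p^m$ is big enough to force $H^i(X,\cL^{p^m})=0$, and therefore $H^i(X,\cL)=0$.

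The only genuinely non-formal point is the identification $F_*\cO_X \otimes \cL \simeq F_*(\cL^p)$, which uses the projection formula plus $F^*\cL\simeq\cL^{\otimes p}$ (the characteristic-$p$ avatar of the Frobenius pulling back a line bundle to its $p$-th tensor power). Once that is in hand, the rest is a purely formal dévissage: tensor, push forward, take cohomology, iterate. I do not anticipate any real obstacle; the statement is essentially the standard consequence of Frobenius splitting used throughout Mehta--Ramanathan theory, and the argument above is independent of anything specific to $\Fl^a_n$ or $R_n$.
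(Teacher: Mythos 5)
Your argument is correct and is exactly the standard Mehta--Ramanathan argument that the paper relies on (the paper gives no proof of its own, citing Proposition 1 of \cite{MR}, whose proof is precisely this: $\cL$ is a direct summand of $F_*(\cL^p)$ via the splitting, projection formula and $F^*\cL\simeq\cL^{\otimes p}$, hence $H^i(X,\cL)$ is a direct summand of $H^i(X,\cL^{p^m})$ for all $m$, which vanishes for $m\gg 0$). Nothing is missing.
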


In order to prove Frobenius splitting of $\Fl^a_n$, we
use two statements from \cite{MR}, which we recall now. The first one is
Proposition $4$ of \cite{MR}:
\begin{prop}
Let $f:Z\to X$ be a proper morphism of algebraic varieties such that $f_*\cO_Z=\cO_X$.
Then if $Z$ is Frobenius split, then $X$ is also Frobenius split.
\end{prop}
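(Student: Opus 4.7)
The plan is to produce a splitting of $X$ by pushing forward the given splitting of $Z$ along $f$, exploiting the compatibility of the absolute Frobenius with any morphism of $\mathbb{F}_p$-schemes.

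More concretely, suppose $\phi : F_*\cO_Z \to \cO_Z$ is a splitting, i.e.\ the composition
\[
\cO_Z \xrightarrow{\ F^\#\ } F_*\cO_Z \xrightarrow{\ \phi\ } \cO_Z
\]
is the identity. The first step is to apply $f_*$ to $\phi$, producing a map $f_*\phi : f_*F_*\cO_Z \to f_*\cO_Z$. The second step is to rewrite both sides. Since the absolute Frobenius is natural (i.e.\ $F_X\circ f = f\circ F_Z$ on the level of schemes, both being the identity on points with $p$-th power on functions), one has a canonical identification $f_*F_{Z,*} = F_{X,*}f_*$ of functors. Combined with the hypothesis $f_*\cO_Z = \cO_X$, this rewrites $f_*\phi$ as a morphism
\[
\psi : F_{X,*}\cO_X \longrightarrow \cO_X,
\]
which is the candidate splitting.

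The third step is to verify that $\psi$ actually splits the Frobenius on $X$. For this, it suffices to observe that under the identifications above, the canonical morphism $\cO_X \to F_{X,*}\cO_X$ coincides with $f_*$ of the canonical morphism $\cO_Z \to F_{Z,*}\cO_Z$ (again via $f_*\cO_Z = \cO_X$). Applying the (exact on the relevant diagram of sheaves) functor $f_*$ to the identity
\[
\mathrm{id}_{\cO_Z} = \phi\circ F^\#_Z
\]
therefore yields $\mathrm{id}_{\cO_X} = \psi \circ F^\#_X$, as required.

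I expect no serious obstacle in this argument: everything rests on the naturality square $F_X\circ f = f\circ F_Z$ and on the hypothesis $f_*\cO_Z = \cO_X$, which together make $f_*$ carry a Frobenius splitting of $Z$ to one of $X$. The only point that requires a moment's care, and which I would highlight in the write-up, is the correct identification of the canonical map $\cO_X \to F_{X,*}\cO_X$ with the $f_*$-image of $\cO_Z \to F_{Z,*}\cO_Z$; once this is in place, taking $f_*$ of the defining relation for $\phi$ finishes the proof.
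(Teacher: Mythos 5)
Your argument is correct: pushing the splitting $\phi$ forward along $f$, using $F_X\circ f=f\circ F_Z$ to identify $f_*F_{Z,*}\cO_Z$ with $F_{X,*}f_*\cO_Z=F_{X,*}\cO_X$, and noting that $f_*$ of the $p$-th power map $\cO_Z\to F_{Z,*}\cO_Z$ is the $p$-th power map $\cO_X\to F_{X,*}\cO_X$ under $f_*\cO_Z=\cO_X$, is exactly the standard proof of this statement (Proposition~4 of \cite{MR}, which the paper cites without reproducing the argument). Two minor cosmetic points: no exactness of $f_*$ is needed, only its functoriality applied to $\mathrm{id}_{\cO_Z}=\phi\circ F_Z^{\#}$, and properness plays no role in the argument itself --- the hypothesis $f_*\cO_Z=\cO_X$ is the only input.
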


\begin{cor}
If $R_n$ is Frobenius split, then $\Fl_n^a$ is Frobenius split as well.
\end{cor}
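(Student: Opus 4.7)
The plan is to apply the preceding proposition to the desingularization map $\pi_n\colon R_n\to \Fl_n^a$ constructed in Section~\ref{decomp}. To invoke that proposition I need two inputs: properness of $\pi_n$, and the identity $(\pi_n)_*\cO_{R_n}=\cO_{\Fl_n^a}$.

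Properness is immediate: by Proposition~\ref{birat} the variety $R_n$ is a successive tower of $\bP^1$-fibrations starting from a point, hence projective, and $\pi_n$ factors through a closed embedding into a product of Grassmannians, so it is a morphism of projective varieties, in particular proper.

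The second input is the content of the argument. The map $\pi_n$ is a proper birational morphism (again by Proposition~\ref{birat}), and the target $\Fl_n^a$ is normal (by the main theorem of Section~3). The standard consequence of Zariski's Main Theorem in this setting is that for a proper birational morphism onto a normal variety, the pushforward of the structure sheaf is the structure sheaf: $(\pi_n)_*\cO_{R_n}=\cO_{\Fl_n^a}$. This is the step I would cite as the essential input; everything else is formal.

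With these two facts in place, the hypothesis of the proposition is satisfied for $\pi_n$, and so Frobenius splitness of $R_n$ transfers to Frobenius splitness of $\Fl_n^a$. The only genuine substantive point is the normality of $\Fl_n^a$, which was already settled in the previous section; given that, the corollary is a one-line consequence. I do not anticipate a real obstacle here — the work is front-loaded into having built the desingularization $\pi_n$ and into having proved normality.
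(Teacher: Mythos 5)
Your proposal is correct and matches the paper's argument: the paper likewise applies Proposition 4 of \cite{MR} to the proper birational map $\pi_n$, deducing $\pi_{n*}\cO_{R_n}=\cO_{\Fl_n^a}$ from the normality of $\Fl_n^a$ established in Section 3 (via the standard Zariski main theorem argument you spell out). No gaps; you have simply made the implicit properness and pushforward steps explicit.
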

\begin{proof}
The normality of $\Fl^a_n$ implies $\pi_{n*}\cO_{R_n}=\cO_{\Fl_n^a}$.
\end{proof}

In order to prove that $R_n$ is Frobenius split we use the Mehta-Ramanathan
theorem (Proposition $8$ of \cite{MR}) which we recall now:
\begin{thm}\label{MR}
Let $Z$ be a smooth projective variety of dimension $M$ and let $Z_1, \dots,Z_M$ be codimension one
subvarieties satisfying the following conditions:
\begin{enumerate}
\item For any $I\subset\{1,\dots,M\}$ the intersection $\cap_{i\in I} Z_i$
is smooth of codimension $\#I$.
\item There exists a global section $s$ of the anti-canonical bundle $K^{-1}$ on $Z$ such that the zero divisor of $s$ equals $Z_1+\dots + Z_M+D$
for some effective divisor $D$ with $\cap_{i=1}^M Z_i\notin supp D$.
\end{enumerate}
Then $Z$ is Frobenius split and for any subset $I\subset \{1,\dots,M\}$ the intersection
$Z_I=\cap_{i\in I} Z_i$ is Frobenius split as well.
\end{thm}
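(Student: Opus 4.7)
The plan is to realize Frobenius splittings as global sections of an appropriate line bundle, then verify a local criterion at a generic point of the intersection $\bigcap_{i=1}^M Z_i$. Since the Frobenius morphism $F:Z\to Z$ is finite and flat on the smooth variety $Z$, Grothendieck duality gives a canonical isomorphism
\[
\Hom_{\cO_Z}(F_*\cO_Z,\cO_Z)\simeq H^0(Z,\omega_Z^{1-p}),
\]
so producing a splitting is the same as producing a section of $K_Z^{-(p-1)}$ whose image under the trace/Cartier map recovers $1\in H^0(Z,\cO_Z)$. The natural candidate section, given the hypotheses, is $\sigma:=s^{p-1}\in H^0(Z,K_Z^{-(p-1)})$.

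The key step would be to check Mehta--Ramanathan's local criterion for $\sigma$ to correspond to a splitting: it suffices to exhibit one closed point $x\in Z$ at which $\sigma$, expressed relative to a local frame of $\omega_Z^{1-p}$, has the form $u\cdot (f_1\cdots f_M)^{p-1}$ with $f_1,\dots,f_M$ a system of local parameters at $x$ and $u$ a unit at $x$. For this I would pick $x\in\bigcap_{i=1}^M Z_i\setminus\mathrm{supp}\,D$, which exists by hypothesis (2). By hypothesis (1), the smoothness of all partial intersections $\bigcap_{i\in I}Z_i$ of the expected codimension forces the divisors $Z_1,\dots,Z_M$ to meet transversally at $x$, so local equations $f_i$ of $Z_i$ at $x$ form a regular system of parameters. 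Since $s$ vanishes to order exactly one along each $Z_i$ and is nonzero on $D$ near $x$, we can write $s=u\cdot f_1\cdots f_M$ locally with $u\in\cO_{Z,x}^\times$; raising to the $(p-1)$-st power gives $\sigma$ in precisely the required form. This produces a Frobenius splitting of $Z$.

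For the compatibility assertion, I would use the standard fact (also due to Mehta--Ramanathan) that a splitting induced by a section of the form above automatically compatibly splits each divisor $Z_i$ appearing to order exactly $p-1$ in the expression for $\sigma$ — this is again read off from the local model, since the restriction of $\sigma$ to $Z_i$ inherits the corresponding vanishing pattern along the remaining $Z_j$'s. Iterating, the scheme-theoretic intersection $Z_I=\bigcap_{i\in I}Z_i$ is compatibly Frobenius split, and by induction on $|I|$, smoothness from (1) lets the argument be repeated on $Z_I$.

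The main technical obstacle I would expect is the precise statement and verification of the local splitting criterion: identifying $\Hom(F_*\cO_Z,\cO_Z)$ with $H^0(Z,\omega_Z^{1-p})$ in a manner compatible with the Cartier operator, and verifying that in the local model $\sigma=u(f_1\cdots f_M)^{p-1}$ the induced map $F_*\cO_Z\to\cO_Z$ indeed sends $1\mapsto 1$. Everything else — transversality, existence of a good point $x\notin\mathrm{supp}\,D$, and compatibility with the $Z_I$ — follows formally from hypotheses (1), (2) together with this local computation.
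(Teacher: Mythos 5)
The paper itself does not prove this statement: it is recalled verbatim as Proposition 8 of \cite{MR}, so there is no internal argument to compare yours against; the relevant comparison is with the standard proof in \cite{MR} (see also \cite{BK}, \S 1.3), and your proposal is essentially that proof and is correct in outline. Two points that you defer are where the actual content lies, and they should be made explicit. First, the reason that checking the local form $\sigma=u\,(f_1\cdots f_M)^{p-1}$ at a \emph{single} point suffices is not purely local: under the identification $\Hom_{\cO_Z}(F_*\cO_Z,\cO_Z)\simeq H^0(Z,\omega_Z^{1-p})$ the element $\varphi_\sigma(1)$ is a global regular function on the projective connected variety $Z$, hence a constant; the local evaluation at your chosen point $x\in\bigcap_i Z_i\setminus \mathrm{supp}\,D$ extracts the coefficient of $(f_1\cdots f_M)^{p-1}$, which is the unit value $u(x)^{p-1}\neq 0$, so the constant is nonzero and one rescales $s$ by a nonzero scalar to make it $1$ --- the local model alone only gives $1\mapsto$ a nonzero constant, and projectivity is genuinely used here. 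Second, for the compatibility assertion it is cleaner (and closer to \cite{MR}) to invoke the two standard facts that a splitting $\varphi_\sigma$ with $\mathrm{div}(\sigma)\ge (p-1)Z_i$ compatibly splits the divisor $Z_i$, and that finite intersections of compatibly split closed subschemes are again compatibly split; this gives the splitting of every $Z_I$ at once, whereas your induction through restrictions to $Z_i$ also works but needs the extra verification that the restricted section of $\omega_{Z_i}^{1-p}$ is the residue of $\sigma$ and uses hypothesis (i) more heavily than necessary. With these two points filled in, your argument is a complete and faithful reconstruction of the Mehta--Ramanathan proof.
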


In our situation $Z=R_n$ over a field ${\mathsf k}=\overline{\mathbb F}_p$
and $M=n(n-1)/2$ is the number of positive roots.
Let us construct the divisors $Z_1,\dots,Z_M$. For convenience, we denote them by
$Z_{i,j}$, $1\le i\le j\le n-1$. Recall that we have a tower of successive $\bP^1$-fibrations
$\rho_l:R_n(l)\to R_n(l-1)$ such that $R_n(M)=R_n$. For each $l$ we construct
a section $s_l$ of $\rho_l$ as follows. We note that in order to specify an element in the fiber
$\rho_l^{-1} {\bf V}$ for some ${\bf V}\in R_n(l-1)$ it suffices to determine the space
$(\rho_l({\bf V}))_{i,j}$, where $\beta_l=\al_{i,j}$. We consider three cases.
First, let $i=1$. Then we put
\[
(s_l({\bf V}))_{1,j}={\mathsf k} w_{j+1}.
\]
Second, let $j=n-1$. Then
\[
(s_l({\bf V}))_{i,n-1}={\mathsf k} w_n\oplus  {\mathsf k} w_{i-1}\oplus \dots \oplus {\mathsf k} w_1.
\]
Finally, let $i\ne 1$ and $j\ne n-1$. Then we set
\[
(s_l({\bf V}))_{i,j}=V_{i-1,j+1}\oplus {\mathsf k} w_{j+1}.
\]
It is  easy to check that with such a definition the resulting element belongs to $R_n(l)$.
In what follows we denote the image $s_l(R_n(l-1))$ by $s_l$ or by $s_{i,j}$ (recall $\beta_l=\al_{i,j}$).

Let $f_l=\rho_{l+1}\dots \rho_M: R_n\to R_n(l)$.
Define
\[
Z_l=Z_{i,j}=\{\bV\in R_n:\ f_l\bV\subset s_l\}.
\]
In other words, the divisor $Z_l$ can be constructed step by step
compatibly with the fibrations $\rho_\bullet$ in such a way that at the $l$-th step
one takes not the whole preimage, but the section $s_l$ only.

Let $\cL_{i,j}$, $1\le i\le j\le n-1$ be the $i$-dimensional  bundle on $R_n$ with
fiber $V_{i,j}$ at a point $\bV$.
We set $\om_{i,j}=\det^{-1} \cL_{i,j}$.
\begin{thm}\label{K-1}
We have
\[
K^{-1}_{R_n}=\cO\left(\sum_{l=1}^M Z_l\right)\T \bigotimes_{i=1}^{n-1} \om_{i,i}\T \bigotimes_{i=1}^{n-2} \om_{i,i+1}.
\]
\end{thm}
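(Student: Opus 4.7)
The plan is to compute $K_{R_n}^{-1}$ by unwinding the tower of $\bP^1$-fibrations $\rho_l : R_n(l) \to R_n(l-1)$ from Proposition \ref{birat}. Iterating the adjunction formula $\omega_{X/Z} = f^*\omega_{Y/Z} \otimes \omega_{X/Y}$ along the tower (and using $R_n(0)=\mathrm{pt}$) yields
\[
K_{R_n}^{-1} = \bigotimes_{l=1}^{M} f_l^{*}\,\omega_{\rho_l}^{-1},
\]
where $f_l: R_n \to R_n(l)$ is the forgetful projection. The task reduces to identifying $\omega_{\rho_l}^{-1}$ and $\cO(Z_l)$ in terms of the $\om_{i,j}$.

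For $\beta_l = \al_{i,j}$ in the generic case $i>1$, $j<n-1$, one realizes $\rho_l$ as the projectivization $\bP(E_l) \to R_n(l-1)$ of the rank-$2$ bundle
\[
E_l = (\cL_{i,j+1} \oplus \cO w_{j+1})/\cL_{i-1,j},
\]
inside which $\cL_{i,j}/\cL_{i-1,j}$ sits as the tautological sub-line, so $\cO_{\bP(E_l)}(1) = \om_{i,j}\,\om_{i-1,j}^{-1}$. The section $s_l$ is the one defined by the sub-line bundle $L_l = (\cL_{i-1,j+1} \oplus \cO w_{j+1})/\cL_{i-1,j}\subset E_l$. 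The two boundary regimes ($i=1$ or $j=n-1$) use the same recipe after setting $\cL_{0,j}:=0$ and, when $j=n-1$, replacing the ambient bundle by the trivial bundle $W_{i,n-1}=\mathrm{span}(w_1,\dots,w_i,w_n)$. Adopting the conventions $\om_{0,j} = \om_{i,n} = \cO$ then makes all subsequent formulas uniform across the four configurations.

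Next I apply the standard projective-bundle identities
\[
\omega_{\rho_l}^{-1} = \cO_{\bP(E_l)}(2) \otimes \rho_l^{*}(\det E_l)^{-1}, \qquad \cO(s_l) = \cO_{\bP(E_l)}(1) \otimes \rho_l^{*}(E_l/L_l).
\]
The short exact sequences
\[
0 \to \cL_{i-1,j} \to \cL_{i,j+1} \oplus \cO w_{j+1} \to E_l \to 0, \qquad 0 \to L_l \to E_l \to \cL_{i,j+1}/\cL_{i-1,j+1} \to 0
\]
give $\det E_l = \om_{i,j+1}^{-1}\om_{i-1,j}$ and $E_l/L_l = \om_{i,j+1}^{-1}\om_{i-1,j+1}$; substitution and taking the ratio yields
\[
\omega_{\rho_l}^{-1} \otimes \cO(Z_l)^{-1} = \om_{i,j}\,\om_{i-1,j}^{-2}\,\om_{i,j+1}^{2}\,\om_{i-1,j+1}^{-1}.
\]

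The final step is to tensor this identity over all $(i,j)$ with $1\le i\le j\le n-1$. A direct re-indexing by $i\mapsto i+1$ and $j\mapsto j-1$ matches the negative exponents of $\om_{i-1,j}$ (resp.\ $\om_{i-1,j+1}$) with positive exponents of $\om_{i,j}$ (resp.\ $\om_{i,j+1}$) coming from other summands, and with the boundary conventions everything with $j-i\ge 2$ cancels out. What survives is exactly $\bigotimes_{i=1}^{n-1}\om_{i,i} \otimes \bigotimes_{i=1}^{n-2}\om_{i,i+1}$, proving the claim. The main obstacle is verifying that the explicit formulas for $E_l$, $L_l$, $\det E_l$ and $E_l/L_l$ really do take the same shape in each of the four boundary configurations of $(i,j)$; once this uniformity is established, the remaining step is a straightforward telescoping sum.
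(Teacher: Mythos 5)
Your strategy is the paper's own: realize $R_n$ as the tower of $\bP^1$-fibrations $\rho_l$, identify each step as the projectivization of the rank-two bundle $E_l=(\cL_{i,j+1}\oplus\cO w_{j+1})/\cL_{i-1,j}$ with the section $s_l$ given by the sub-line bundle $L_l=(\cL_{i-1,j+1}\oplus\cO w_{j+1})/\cL_{i-1,j}$ (these are exactly the bundles $\cL_2,\cL_1$ in the paper's Lemmas \ref{proj}--\ref{om}), pull everything back to $R_n$ (harmless, since $f_l$ is smooth, so $\cO_{R_n}(Z_l)=f_l^*\cO(s_l)$) and telescope. However, your two ``standard projective-bundle identities'' belong to opposite conventions. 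With your explicit normalization $\cO_{\bP(E_l)}(1)=(\cL_{i,j}/\cL_{i-1,j})^{\vee}=\om_{i,j}\T\om_{i-1,j}^{-1}$ (dual of the tautological sub-line $S$), the relative Euler sequence gives $T_{R_n(l)/R_n(l-1)}=\Hom(S,E_l/S)=\cO_{\bP(E_l)}(2)\T\rho_l^*\det E_l$, so $\omega_{\rho_l}^{-1}=\cO_{\bP(E_l)}(2)\T\rho_l^*\det E_l$; the formula you use, with $(\det E_l)^{-1}$, is valid only when $\cO(1)$ denotes the universal quotient line. Your formula for $\cO(s_l)$ is the sub-convention one, hence correct, so your displayed per-step ratio is off by $(\det E_l)^{-2}=\om_{i,j+1}^{2}\T\om_{i-1,j}^{-2}$. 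The correct identity, with the boundary conventions $\om_{0,*}=\om_{*,n}=\cO$, is the cleaner
\[
\omega_{\rho_l}^{-1}\T\cO(s_l)^{-1}=\cO_{\bP(E_l)}(1)\T\rho_l^*L_l=\om_{i,j}\T\om_{i-1,j+1}^{-1},
\]
which is precisely what the paper's Lemmas \ref{K} and \ref{om} amount to after eliminating $\cO(s_{i,j})$.

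As it happens, you are rescued by a cancellation: the error term $\om_{i,j+1}^{2}\om_{i-1,j}^{-2}$ itself telescopes to the trivial bundle when multiplied over all $1\le i\le j\le n-1$, so the exponent count you perform on the (incorrect) ratio $\om_{i,j}\om_{i-1,j}^{-2}\om_{i,j+1}^{2}\om_{i-1,j+1}^{-1}$ does land on $\bigotimes_{i=1}^{n-1}\om_{i,i}\T\bigotimes_{i=1}^{n-2}\om_{i,i+1}$. But the stated intermediate identity is false as an equality of line bundles, and the agreement of the final product is an accident of the telescoping rather than a verification of the step. Fix the convention and the argument becomes both correct and shorter: in $\prod_{1\le i\le j\le n-1}\om_{i,j}\om_{i-1,j+1}^{-1}$ the factor $\om_{i-1,j+1}^{-1}$ is nontrivial only for $i\ge 2$, $j\le n-2$, where it cancels the positive factor indexed by $(i-1,j+1)$, leaving exactly the pairs with $j-i\in\{0,1\}$, i.e.\ the formula of Theorem \ref{K-1}. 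Do also record the one-line check that $\cL_{i-1,j}\subset\cL_{i,j+1}\oplus\cO w_{j+1}$ (from $V_{i-1,j}\subset V_{i-1,j+1}\oplus\bC w_{j+1}\subset V_{i,j+1}\oplus\bC w_{j+1}$), which is what makes $E_l$ well defined.
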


We first prove a lemma.
Let $B$ be a smooth projective variety and let $\cL_2$ be a two-dimensional bundle
on $B$ with a line subbundle $\cL_1$. Let $\rho:E\to B$ be a $\bP^1$-fibration with
$E=\bP(\cL_2)$. Let $s:B\to E$ be a section of $\rho$ defined by $\cL_1$.
In what follows we denote the section $s(B)\subset E$ simply by $s$.
\begin{lem}\label{proj}
For a line bundle $\cF$ on $E$ such that the restriction of $\cF$ to a fiber of $\rho$ is equal
to $\cO(k)$ one has
\[
\cF=\cO(ks)\T \rho^*(\cF|_{s}\T \cO(-ks)|_{s}).
\]
\end{lem}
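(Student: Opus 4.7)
The plan is to exploit the standard description of the Picard group of a $\bP^1$-bundle together with the fact that the section $s$ furnishes a natural relative degree-one divisor on $E$.

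First I would verify that the line bundle $\cO_E(s)$ associated with the divisor $s(B)\subset E$ restricts to $\cO(1)$ on every fiber of $\rho$. Since $s:B\hookrightarrow E$ is a section, $s(B)$ meets each fiber transversally in exactly one point, so $\cO_E(s)|_{\rho^{-1}(b)}=\cO_{\bP^1}(1)$ for every $b\in B$. Hence $\cO_E(s)$ is a relative $\cO(1)$ for the projective bundle $E=\bP(\cL_2)\to B$.

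Next I would invoke the standard structure of the Picard group of a projective bundle: every line bundle on $E$ is of the form $\cO_E(m\cdot s)\otimes \rho^*\cM$ for a unique integer $m$ and a line bundle $\cM$ on $B$, where $m$ is recovered as the degree on a fiber. Applying this to $\cF$, and using the hypothesis that $\cF$ has fiberwise degree $k$, I obtain $\cF\simeq \cO_E(ks)\T \rho^*\cM$ for some line bundle $\cM$ on $B$.

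Finally I would determine $\cM$ by pulling back both sides along the section $s$. Since $s$ is a section of $\rho$, one has $s^*\rho^*\cM=\cM$, and therefore
\[
\cF|_s = \cO_E(ks)|_s \T \cM,
\]
so $\cM=\cF|_s\T\cO_E(-ks)|_s$. Substituting this expression for $\cM$ back into the decomposition yields the claimed formula. The only non-routine point is the Picard-group decomposition for the projective bundle, which is standard, and identifying $\cO_E(s)$ with a relative $\cO(1)$, which is immediate from the fact that $s$ is a section.
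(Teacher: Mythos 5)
Your proof is correct and follows essentially the same route as the paper: the paper notes that $\cF\T\cO(-ks)$ has degree zero on fibers and is therefore pulled back from the base (identified with the section), which is exactly the content of your Picard-group decomposition plus restriction to $s$. The only difference is presentational — you make the uniqueness of the decomposition and the computation of $\cM$ via $s^*$ explicit, which the paper leaves implicit.
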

\begin{proof}
We note that $\cF\T \cO(-ks)$ restricts trivially to a fiber of $\rho$ and therefore can be
pulled back from some line bundle on the section.
\end{proof}

We apply this lemma to the case $B=R_n(l-1)$, $E=R_n(l)$,  $\rho=\rho_l$, $s$
being a section constructed above. The bundles $\cL_1$ and $\cL_2$ in our situation are
described as follows: the fiber of $\cL_1$ at a point $\bV\in R_n(l-1)$ is equal to
\begin{equation}\label{L1}
\frac{V_{i-1,j+1}\oplus {\mathsf k}w_{j+1}}{V_{i-1,j}}
\end{equation}
and the fiber of $\cL_2$ at a point $\bV$ is equal to
\begin{equation}\label{L2}
\frac{V_{i,j+1}\oplus {\mathsf k}w_{j+1}}{V_{i-1,j}}.
\end{equation}
For $\cF$ we first take $K^{-1}_{R_n(l)}$ and then $\om_{i,j}$, where $\beta_l=\al_{i,j}$.
\begin{lem}\label{K}
Let $i\ne 1$ and $j\ne n-1$. Then
\[
K_E^{-1}=\cO(2s_{i,j})\T \rho^*(K_B^{-1})\T \rho^*(\om_{i,j+1}\T (\om_{i-1,j+1}^*)^{\T 2}\T \om_{i-1,j}).
\]
Let $i=1$. Then
\[
K_E^{-1}=\cO(2s_{1,j})\T \rho^*(K_B^{-1})\T \rho^*(\om_{1,j+1}).
\]
Let $j=n-1$. Then
\[
K_E^{-1}=\cO(2s_{i,n-1})\T \rho^*(K_B^{-1})\T \rho^*(\om_{i-1,n-1}).
\]
\end{lem}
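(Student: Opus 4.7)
The plan is to combine the standard formula for the canonical bundle of a $\bP^1$-bundle with Lemma~\ref{proj} in order to switch from the hyperplane class $\cO_E(1)$ to the section class $\cO(s_{i,j})$, and then identify the resulting line bundles in terms of the $\om_{i',j'}$.

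First, since $E = \bP(\cL_2)$ over $B = R_n(l-1)$ with $\cL_2$ the rank-two bundle of~\eqref{L2}, the relative Euler sequence yields
\[
K_{E/B}^{-1} = \cO_E(2)\T \rho^*(\det\cL_2).
\]
Combining with $K_E = K_{E/B}\T\rho^*K_B$ gives $K_E^{-1} = \rho^*K_B^{-1}\T\cO_E(2)\T\rho^*(\det\cL_2)$. Now apply Lemma~\ref{proj} with $\cF = K_E^{-1}$ and $k=2$. By adjunction along the smooth section $s_{i,j}\cong B$ one has $K_E^{-1}|_{s_{i,j}} = K_B^{-1}\T N_{s/E}$, and since the section corresponds to the inclusion $\cL_1\subset \cL_2$ (with $\cL_1$ as in~\eqref{L1}), the normal bundle is $N_{s/E} = \cL_1^{-1}\T (\cL_2/\cL_1)$. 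Moreover $\cO(-s_{i,j})|_{s_{i,j}} = N_{s/E}^{-1}$. Substituting into Lemma~\ref{proj} and cancelling one power of $N_{s/E}$ against one power of its inverse leaves
\[
K_E^{-1} = \cO(2s_{i,j})\T\rho^*K_B^{-1}\T\rho^*\bigl(\cL_1\T (\cL_2/\cL_1)^{-1}\bigr).
\]

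The remaining task is to identify $\cL_1\T(\cL_2/\cL_1)^{-1}$ in each of the three cases, using~\eqref{L1},~\eqref{L2} and the tautological sequences. In the generic case $i\ne 1$, $j\ne n-1$, taking determinants in the short exact sequence $0\to V_{i-1,j}\to V_{i-1,j+1}\oplus {\mathsf k}w_{j+1}\to \cL_1\to 0$ (and using triviality of the summand ${\mathsf k}w_{j+1}$) gives $\cL_1 = \om_{i-1,j}\T\om_{i-1,j+1}^{-1}$, while $\cL_2/\cL_1\cong V_{i,j+1}/V_{i-1,j+1}$ has determinant $\om_{i-1,j+1}\T\om_{i,j+1}^{-1}$; multiplication then produces the factor $\om_{i,j+1}\T\om_{i-1,j+1}^{-2}\T\om_{i-1,j}$ claimed in the statement. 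In the two boundary cases the same argument applies with obvious modifications: for $i=1$ one sets $V_{0,\bullet}=0$, so $\cL_1 = {\mathsf k}w_{j+1}$ is trivial and $(\cL_2/\cL_1)^{-1} = \om_{1,j+1}$; for $j=n-1$ the correct model for the $\bP^1$-fiber is $\cL_2 = W_{i,n-1}/V_{i-1,n-1}$ with section given by $V_{i,n-1}=W_{i-1,n-1}$, so that $\cL_1 = W_{i-1,n-1}/V_{i-1,n-1}$ has determinant $\om_{i-1,n-1}$ and $\cL_2/\cL_1\cong{\mathsf k}w_i$ is trivial.

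The only real obstacle is the case-by-case bookkeeping and the need to track the trivial line summands ${\mathsf k}w_{j+1}$ and ${\mathsf k}w_i$, which are invisible on the level of determinants but essential for the dimensions to work out. The essential input is entirely standard and characteristic-free: the Euler sequence for a $\bP^1$-bundle, the adjunction formula, and Lemma~\ref{proj}.
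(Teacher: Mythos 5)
Your proposal is correct and follows essentially the same route as the paper: apply Lemma~\ref{proj} to $K_E^{-1}$ with $k=2$, identify $\cO(s_{i,j})|_{s_{i,j}}$ with the normal bundle $\cL_1^*\T(\cL_2/\cL_1)$ of the section (the paper does this via the sequence $0\to T_{E/B}\to T_E\to\rho^*T_B\to 0$, you via adjunction, which is the same computation), and then evaluate determinants using \eqref{L1} and \eqref{L2} in the three cases. The detour through the relative Euler sequence is redundant but harmless, and your case-by-case identifications, including the boundary cases $i=1$ and $j=n-1$, match the sections $s_{i,j}$ defined in the paper.
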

\begin{proof}
We prove the first formula (the rest of the proof is very similar).
Our main tool is Lemma \ref{proj}.
Let $s=s_{i,j}$.
We note that the restriction of $K_E$ to the fibers of the map $\rho$ equals
$\cO(-2)$.
Also
\[
\cO(s)|_s\simeq Hom(\cL_1,\cL_2/\cL_1)\simeq T_{E/B},
\]
where $T_{E/B}$ is the normal line bundle to $s\simeq B$.
Consider the exact sequence
\[
0\to T_{E/B}\to T_E\to \rho^* T_B\to 0.
\]
Since $K_E=\det T_E^*$, we obtain $K_E=\det T_{E/B}^*\T \det \rho^* T^*_B$.
Therefore, Lemma \ref{proj} gives
\[
K_E^{-1}=\cO(2s)\T \rho^* (K_B^{-1})\T \rho^* (T_{E/B}^*).
\]
Now explicit computation of
$T_{E/B}=\cL_1^*\otimes (\cL_2/\cL_1)$ (using \eqref{L1} and \eqref{L2}) gives the desired formula.
\end{proof}

We now take $\cF=\om_{i,j}$ (recall $\beta_l=\al_{i,j}$ and
$\rho=\rho_l:R_n(l)\to R_n(l-1)$).
\begin{lem}\label{om}
Let $i\ne 1$ and $j\ne n-1$. Then
\[
\om_{i,j}=\cO(s_{i,j})\T \rho^*(\om_{i,j+1}\T \om_{i-1,j+1}^*\T \om_{i-1,j}).
\]
Let $i=1$. Then
\[
\om_{1,j}=\cO(s_{1,j})\T \rho^*(\om_{1,j+1}).
\]
Let $j=n-1$. Then
\[
\om_{i,n-1}=\cO(s_{i,n-1})\T \rho^*(\om_{i-1,n-1}).
\]
\end{lem}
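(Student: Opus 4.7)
The plan is to identify $\om_{i,j}$ on $E = R_n(l)$ as a twist of the tautological bundle $\cO_{\bP(\cL_2)}(1)$, apply Lemma \ref{proj} to convert $\cO(1)$ into $\cO(s_{i,j})$, and then read off the three cases from an explicit description of $\cL_2/\cL_1$.

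First I would exhibit on $E$ the tautological exact sequence
\[
0\to \rho^*\cL_{i-1,j}\to \cL_{i,j}\to \cO_{\bP(\cL_2)}(-1)\to 0,
\]
whose quotient at the point representing the subspace $V_{i,j}$ is the line $V_{i,j}/V_{i-1,j}\subset\cL_2|_\bV$. Taking determinants yields $\om_{i,j}=\cO_{\bP(\cL_2)}(1)\T\rho^*\om_{i-1,j}$, with the convention $\om_{0,j}=\cO_B$ so that the pullback factor drops out when $i=1$.

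Next I would apply Lemma \ref{proj} with $\cF=\cO_{\bP(\cL_2)}(1)$, which has degree one on fibers of $\rho$. Since the tautological inclusion $\cO(-1)\hookrightarrow \rho^*\cL_2$ restricts on the section $s=s_{i,j}$ to the defining subbundle $\cL_1\hookrightarrow\cL_2$, we have $\cO(1)|_s=\cL_1^*$; combined with the normal bundle identification $\cO(s)|_s=\cL_1^*\T(\cL_2/\cL_1)$ already used in the proof of Lemma \ref{K}, Lemma \ref{proj} collapses after cancellation to
\[
\cO_{\bP(\cL_2)}(1)=\cO(s)\T\rho^*\bigl((\cL_2/\cL_1)^{-1}\bigr).
\]
Combined with the previous step this reduces the lemma to identifying $\cL_2/\cL_1$ as a line bundle on $B$ in each of the three cases.

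It remains to compute $\cL_2/\cL_1$. In the generic case $i\ne 1,\, j\ne n-1$ its fiber is $V_{i,j+1}/V_{i-1,j+1}$, so $(\cL_2/\cL_1)^{-1}=\om_{i,j+1}\T\om_{i-1,j+1}^*$, which reassembles into the first formula. For $i=1$ one has $\cL_1=\cO_B\cdot w_{j+1}$ and $\cL_2/\cL_1\simeq \cL_{1,j+1}$, whence $(\cL_2/\cL_1)^{-1}=\om_{1,j+1}$. For $j=n-1$ the natural substitutes are $\cL_2=W_{i,n-1}/\cL_{i-1,n-1}$ and $\cL_1$ the image of $\spa(w_1,\dots,w_{i-1},w_n)$, so that $\cL_2/\cL_1\simeq\bC\cdot w_i$ is trivial and the pullback factor disappears. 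The only subtle point is setting up the correct analogues of $\cL_1,\cL_2$ at the boundaries $i=1$ and $j=n-1$, where the general formulas \eqref{L1}--\eqref{L2} degenerate; once that is done the rest is formal bookkeeping.
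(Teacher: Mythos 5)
Your argument is correct and is essentially the paper's own proof in expanded form: the paper likewise notes that $\om_{i,j}$ has degree one on the fibers and applies Lemma \ref{proj}, leaving exactly the bookkeeping with $\cL_1$, $\cL_2$ (and their boundary-case analogues for $i=1$, $j=n-1$) that you carry out explicitly. Your intermediate factorization through the tautological sequence $0\to\rho^*\cL_{i-1,j}\to\cL_{i,j}\to\cO_{\bP(\cL_2)}(-1)\to 0$ is just a slightly different arrangement of the same computation, and all three case identifications of $\cL_2/\cL_1$ are right.
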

\begin{proof}
We note that the restriction of $\om_{i,j}$ to the fibers of $\rho$ equals to $\cO(1)$.
Now the formula can be proved by an explicit computation using
Lemma~\ref{proj}.
\end{proof}

\begin{cor}\label{lift}
$K_{R_n}^{-1}=\bigotimes_{i=1}^{n-1} \om_{i,i}^{\T 2}$.
\end{cor}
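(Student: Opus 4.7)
My plan is to deduce the corollary by substituting Lemma \ref{om} into Theorem \ref{K-1} and exploiting a telescoping cancellation among the $\om_{i,j}$'s.

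First, I adopt the convention $\om_{0,j}=\om_{i,n}:=\cO_{R_n}$ on the boundary. Inspecting the three cases of Lemma \ref{om}, all of them can be written uniformly as
\[
\om_{i,j}=\cO(s_{i,j})\T \rho^*\!\bigl(\om_{i,j+1}\T \om_{i-1,j+1}^{-1}\T \om_{i-1,j}\bigr),
\]
since the missing factors in the $i=1$ and $j=n-1$ cases correspond precisely to the index pairs where my convention makes $\om$ trivial. I then pull this identity back via $f_l:R_n\to R_n(l)$ (with $\beta_l=\al_{i,j}$). By the definition of $Z_{i,j}=f_l^{-1}(s_l)$ and the flatness of $f_l$, one has $f_l^*\cO(s_{i,j})=\cO(Z_{i,j})$; and the pullback of $\om_{a,b}$ from $R_n(l-1)$ is just $\om_{a,b}$ on $R_n$. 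Solving for $\cO(Z_{i,j})$ gives the single identity
\[
\cO(Z_{i,j})=\om_{i,j}\T \om_{i,j+1}^{-1}\T \om_{i-1,j+1}\T \om_{i-1,j}^{-1}
\]
on $R_n$, valid for every $1\le i\le j\le n-1$.

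Second, I tensor these identities over all $(i,j)$. For each fixed $\om_{a,b}$ with $1\le a\le b\le n-1$, the four tensor slots contribute to its exponent with signs $+,-,+,-$ coming from the pairs $(i,j)=(a,b),(a,b-1),(a+1,b-1),(a+1,b)$ respectively, provided each such pair actually lies in the admissible range $1\le i\le j\le n-1$. A direct three-case check according to the position of $(a,b)$ relative to the diagonal yields: exponent $+1$ when $a=b$, exponent $-1$ when $b=a+1$, and exponent $0$ when $b\ge a+2$. Therefore
\[
\cO\Bigl(\sum_{l=1}^M Z_l\Bigr)=\bigotimes_{i=1}^{n-1}\om_{i,i}\T \bigotimes_{i=1}^{n-2}\om_{i,i+1}^{-1}.
\]

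Finally, plugging this expression into Theorem \ref{K-1}, the $\om_{i,i+1}^{-1}$ exactly cancels the $\om_{i,i+1}$ in the formula for $K^{-1}_{R_n}$, and the two copies of $\om_{i,i}$ combine, yielding $K^{-1}_{R_n}=\bigotimes_{i=1}^{n-1}\om_{i,i}^{\T 2}$. The only genuine task is the combinatorial bookkeeping at the second step; the main potential obstacle is the boundary casework, which the convention $\om_{0,\cdot}=\om_{\cdot,n}=\cO$ handles uniformly, reducing everything to a single index calculation.
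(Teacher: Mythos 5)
Your proof is correct, but it is organized differently from the paper's. The paper proves this corollary \emph{before} (and independently of) the proof of Theorem \ref{K-1}: it substitutes the expression for $\cO(s_{i,j})$ from Lemma \ref{om} directly into the stepwise formulas for $K^{-1}_{R_n(l)}$ of Lemma \ref{K} and descends the tower of $\bP^1$-fibrations, obtaining $K_{R_n}^{-1}=\bigotimes\om_{i,i}^{\T 2}$ without ever mentioning the divisors $Z_l$; Theorem \ref{K-1} is then established afterwards by a similar iterative argument. You instead take Theorem \ref{K-1} as input and use Lemma \ref{om}, pulled back along $f_l$ (using that $f_l$ is a composition of $\bP^1$-bundles, so $f_l^*\cO(s_l)=\cO(Z_l)$ and the $\om$'s pull back to $\om$'s), to get the global identity $\cO(Z_{i,j})=\om_{i,j}\T\om_{i,j+1}^{-1}\T\om_{i-1,j+1}\T\om_{i-1,j}^{-1}$, whose product telescopes to $\cO\bigl(\sum_l Z_l\bigr)=\bigotimes_{i}\om_{i,i}\T\bigotimes_i\om_{i,i+1}^{-1}$; your exponent count ($+1$ on the diagonal, $-1$ on the superdiagonal, $0$ otherwise) and your boundary convention $\om_{0,\cdot}=\om_{\cdot,n}=\cO$ are consistent with all three cases of Lemma \ref{om}, including the first step of the tower. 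This is not circular, since the paper's proof of Theorem \ref{K-1} uses only Lemmas \ref{K}, \ref{om} and Corollary \ref{indcor}, not the present corollary; but be aware that the substantial work (the descent of the tower) is thereby outsourced to Theorem \ref{K-1}, whereas the paper's route gets the corollary directly from the two lemmas. A pleasant by-product of your argument is the clean divisor-class identity for $\sum_l Z_l$ in terms of the $\om_{i,j}$, which in the paper only appears implicitly as the difference between Theorem \ref{K-1} and this corollary.
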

\begin{proof}
We substitute the expression for $\cO(s_{i,j})$ in terms of $\om_{i,j}$ from Lemma \ref{om}
into the formulas from Lemma \ref{K}.
\end{proof}

\begin{cor}
Theorem \ref{K-1} holds.
\end{cor}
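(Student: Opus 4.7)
The final statement is the corollary asserting that Theorem \ref{K-1} holds; since Corollary \ref{lift} and Lemma \ref{om} have just been established, the proof is a bookkeeping calculation that combines them. My plan is to sum the expressions from Lemma \ref{om} over all $(i,j)$, observe a telescoping, and then match the result with the identity of Corollary \ref{lift}.

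First I would unify the three formulas of Lemma \ref{om} into a single one by adopting the conventions $\om_{0,j}=\cO_{R_n}$ (since $V_{0,j}=0$) and $\om_{i,n}=\cO_{R_n}$ (since $V_{i,n}\subset W_{i,n}=\spa(w_1,\dots,w_i)$ is forced). Under these conventions the three cases collapse to the single formula
\[
\cO(s_{i,j})=\om_{i,j}\T\om_{i,j+1}^{-1}\T\om_{i-1,j+1}\T\om_{i-1,j}^{-1}
\]
for every pair $1\le i\le j\le n-1$ (after pulling back along $f_l$, which is harmless since tautological bundles pull back to tautological bundles). Pulling back along $f_l:R_n\to R_n(l)$ turns $\cO(s_l)$ into $\cO(Z_l)$, so the same identity describes $\cO(Z_{i,j})$ on $R_n$.

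Next I would rewrite, in additive notation in $\mathrm{Pic}(R_n)$, the previous relation as
\[
\cO(Z_{i,j})=D_{i,j}-D_{i,j+1},\qquad D_{i,j}:=\om_{i,j}-\om_{i-1,j},
\]
making the $j$-dependence telescoping. Summing over $j=i,\dots,n-1$ for each fixed $i$ leaves only $D_{i,i}-D_{i,n}=D_{i,i}$ (since $\om_{i,n}=\om_{i-1,n}=0$). Summing over $i=1,\dots,n-1$ and again using $\om_{0,j}=0$, one obtains
\[
\sum_{l=1}^M\cO(Z_l)=\sum_{i=1}^{n-1}\om_{i,i}-\sum_{i=1}^{n-2}\om_{i,i+1},
\]
or multiplicatively $\cO(\sum_l Z_l)=\bigotimes_{i=1}^{n-1}\om_{i,i}\T\bigotimes_{i=1}^{n-2}\om_{i,i+1}^{-1}$.

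Finally, I would combine this with Corollary \ref{lift}, which states $K_{R_n}^{-1}=\bigotimes_{i=1}^{n-1}\om_{i,i}^{\T 2}$; trading one copy of $\bigotimes_{i=1}^{n-1}\om_{i,i}$ for $\cO(\sum_l Z_l)\T\bigotimes_{i=1}^{n-2}\om_{i,i+1}$ yields exactly the formula of Theorem \ref{K-1}. The only real subtlety is the boundary-case book-keeping at $i=1$, $j=n-1$, and at the corner $(i,j)=(1,n-1)$, where one must check that the conventions $\om_{0,*}=\om_{*,n}=\cO$ make all three cases of Lemma \ref{om} consistent with the unified formula; once this is verified, the telescoping and the final matching are immediate.
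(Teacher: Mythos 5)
Your argument is correct, but it is organized differently from the paper's. The paper proves this corollary without invoking Corollary \ref{lift}: it runs a fresh induction down the tower of fibrations $\bar\rho_m$, applying Lemma \ref{K} together with Lemma \ref{om} at each stage and using Corollary \ref{indcor} to identify $R_n(m(m+1)/2)$ with $R_m$, so that the anticanonical bundle is carried along and the divisors $Z_{i,i}$, $Z_{i,i+1}$, \dots\ are peeled off row by row until the stated formula emerges. You instead take Corollary \ref{lift} as the black box encapsulating all uses of Lemma \ref{K}, and reduce Theorem \ref{K-1} to the single identity $\cO\bigl(\sum_l Z_l\bigr)=\bigotimes_{i=1}^{n-1}\om_{i,i}\T\bigotimes_{i=1}^{n-2}\om_{i,i+1}^{-1}$, which you obtain by rewriting Lemma \ref{om} uniformly (with the conventions $\om_{0,*}=\om_{*,n}=\cO$, which do collapse the three cases correctly, including the corner $(1,n-1)$) and telescoping in $j$ and then in $i$. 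What your route buys is a shorter, more transparent bookkeeping --- no second induction, no appeal to Lemma \ref{K} or Corollary \ref{indcor} at this step, and a clean intermediate formula for $\cO(\sum_l Z_l)$ that makes the equivalence of Theorem \ref{K-1} with Corollary \ref{lift} explicit; what the paper's route buys is independence from Corollary \ref{lift} (the two statements are derived in parallel from the same lemmas) and an argument that stays entirely within the fibration-by-fibration framework. One small point you should state explicitly: the passage from $\cO(s_l)$ on $R_n(l)$ to $\cO(Z_l)$ on $R_n$ uses that $f_l$ is a composition of $\bP^1$-fibrations, hence smooth, so $f_l^{-1}(s_l)=Z_l$ is a reduced divisor and $f_l^*\cO(s_l)=\cO(Z_l)$ with multiplicity one; likewise the tautological bundles $\om_{i,j}$ on the intermediate stages pull back to those on $R_n$, which is the compatibility you invoke when suppressing the $\rho^*$'s.
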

\begin{proof}
Recall the $(\bP^1)^m$-fibrations $\bar\rho_m:R_n(m(m+1)/2)\to R_n(m(m-1)/2)$, where $n-1\ge m\ge 1$.
Using Lemmas \ref{K} and \ref{om} we obtain
\[
K^{-1}_{R_n}=\bigotimes_{i=1}^{n-1} \cO(Z_{i,i})\T \bigotimes_{i=1}^{n-1} \om_{i,i}
\T \bigotimes_{i=1}^{n-3} \om_{i,i+2}^*\T \bar\rho^*_{n-1} K^{-1}_{R_n((n-1)(n-2)/2)}.
\]
Using Corollary \ref{indcor} and Lemmas \ref{K} and \ref{om} again, we rewrite further
\begin{multline*}
K^{-1}_{R_n}=\bigotimes_{i=1}^{n-1} \cO(Z_{i,i})\T \bigotimes_{i=1}^{n-2} \cO(Z_{i,i+1})\T\\
\bigotimes_{i=1}^{n-1} \om_{i,i}\T \bigotimes_{i=1}^{n-2} \om_{i,i+1}
\T \bigotimes_{i=1}^{n-4} \om_{i,i+3}^*\T \bar\rho^*_{n-2}\bar\rho^*_{n-1} K^{-1}_{R_n((n-2)(n-3)/2)}.
\end{multline*}
Continuing further, we arrive at the desired formula.
\end{proof}

\begin{cor}
The varieties $R_n$ and $\Fl_n^a$ are Frobenius split.
\end{cor}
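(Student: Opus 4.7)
The plan is to apply the Mehta--Ramanathan criterion (Theorem~\ref{MR}) to $Z = R_n$ with the $M = n(n-1)/2$ divisors $Z_{i,j}$, $1 \le i \le j \le n-1$, constructed above; Frobenius splitting of $\Fl_n^a$ will then follow from the corollary earlier in this section, using that $\pi_{n*}\cO_{R_n} = \cO_{\Fl_n^a}$ by normality.

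For condition (1) of Theorem~\ref{MR}, one must check that every intersection $\bigcap_{(i,j) \in I} Z_{i,j}$ is smooth of codimension $|I|$. By construction each $Z_l$ is the preimage under $f_l$ of a section $s_l$ of $\rho_l$, and the formula defining $s_l$ depends only on the components of $\bV$ at strictly earlier levels of the tower. Imposing several of the conditions $Z_l$ simultaneously therefore replaces the corresponding $\bP^1$-fibers by the prescribed sections, leaving a compatible sub-tower of $\bP^1$-fibrations, which is smooth of codimension exactly $|I|$. In particular $\bigcap_{l=1}^M Z_l$ is the single torus-fixed point $p_0$ obtained by applying the section rules inductively, and each component $(p_0)_{i,j}$ is a coordinate subspace of $W_{i,j}$.

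For condition (2), Theorem~\ref{K-1} gives
\[
K^{-1}_{R_n} \;=\; \cO\bigl(\textstyle\sum_{l} Z_{l}\bigr) \T L, \qquad L \;=\; \bigotimes_{i=1}^{n-1}\om_{i,i} \T \bigotimes_{i=1}^{n-2}\om_{i,i+1}.
\]
The canonical section of $\cO(\sum_l Z_l)$ has divisor exactly $\sum_l Z_l$, so tensoring it with any nonzero $\sigma \in H^0(R_n, L)$ yields a section of $K^{-1}_{R_n}$ with divisor $\sum_l Z_l + \mathrm{div}(\sigma)$. It therefore suffices to exhibit $\sigma$ not vanishing at $p_0$.

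To construct such $\sigma$, I use that each $\om_{i,j}$ is the pullback of the Pl\"ucker line bundle $\cO(1)$ under $\xi_{i,j}: R_n \to Gr(i, W_{i,j})$. Since $\xi_{i,j}(p_0)$ is a coordinate subspace of $W_{i,j}$, the Pl\"ucker coordinate dual to it is a section of $\om_{i,j}$ not vanishing at $p_0$. Taking the tensor product of such sections over the index pairs $(i,i)$ and $(i,i+1)$ appearing in $L$ produces the required $\sigma$. The main technical obstacle will be carrying out condition (1) rigorously: one must verify that the compatibility of the section formulas for the $s_l$ makes every intersection $\bigcap_{(i,j) \in I} Z_{i,j}$ genuinely smooth and transverse of the expected dimension, rather than merely containing such a locus as a component.
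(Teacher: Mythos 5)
Your proposal is correct and follows essentially the same route as the paper: apply the Mehta--Ramanathan criterion to $R_n$ with the divisors $Z_{i,j}$, use Theorem~\ref{K-1} to identify the residual factor of $K^{-1}_{R_n}$, exhibit a section of $\bigotimes_{i}\om_{i,i}\T\bigotimes_{i}\om_{i,i+1}$ not vanishing at $\bigcap_l Z_l$, and transfer the splitting to $\Fl_n^a$ via $\pi_{n*}\cO_{R_n}=\cO_{\Fl_n^a}$. The extra details you supply — that the $s_l$ depend only on earlier levels of the tower so the intersections $Z_I$ are smooth sub-towers of the expected codimension, and the explicit Pl\"ucker-coordinate section at the fixed point — are exactly the verifications the paper leaves implicit ("it is easy to see that this line bundle does not have any base points at all"), and they are sound.
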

\begin{proof}
According to Theorem \ref{MR} it suffices to find a section of the line bundle
\[
\bigotimes_{i=1}^{n-1} \om_{i,i}\T \bigotimes_{i=1}^{n-2} \om_{i,i+1}.
\]
which does not vanish at the point $\cap_{l=1}^M Z_l$. But it is easy to see that
this line bundle does not have any base points at all.
\end{proof}

\begin{thm}
All the degenerate partial flag varieties $\Fl_\bd^a$ are Frobenius split.
\end{thm}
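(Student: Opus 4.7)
The plan is to imitate the complete-case argument and reduce to the already-established Frobenius splitting of $R_n$ via Proposition~4 of~\cite{MR}. I would factor the natural forgetful composition as
\[
R_n \xrightarrow{\ p\ } R_\bd \xrightarrow{\ \pi_\bd\ } \Fl_\bd^a,
\]
where $p$ drops the components indexed by positive roots outside $P_\bd$ and $\pi_\bd$ is the desingularization from Section~\ref{decomp}, and apply Proposition~4 twice.

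The key combinatorial observation is that $P_\bd=\{\al_{i,j}:\exists\,l,\ i\le d_l\le j\}$ is downward closed for the partial order $(i',j')\preceq(i,j)\Leftrightarrow i'\le i\text{ and }j'\ge j$ which governs the tower construction of Proposition~\ref{birat}: if $(i,j)\in P_\bd$ with witness $d_l$, then any $(i',j')$ with $i'\le i$, $j'\ge j$ still satisfies $i'\le d_l\le j'$. I can therefore choose a linear extension $\beta_1,\dots,\beta_M$ of the partial order in which $P_\bd=\{\beta_1,\dots,\beta_{|P_\bd|}\}$ forms an initial segment. Running the inductive construction of Proposition~\ref{birat} with respect to this ordering exhibits $R_\bd$ as the intermediate stage $R_n(|P_\bd|)$, which is itself an iterated $\bP^1$-fibration over a point, and simultaneously realizes $p\colon R_n\to R_\bd$ as an iterated $\bP^1$-bundle. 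In particular $p$ is proper and smooth with $p_*\cO_{R_n}=\cO_{R_\bd}$, so Proposition~4 of~\cite{MR} transfers the Frobenius splitting of $R_n$ to $R_\bd$.

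For the second step, $\pi_\bd\colon R_\bd\to\Fl_\bd^a$ is a proper birational morphism from a smooth variety to the normal variety $\Fl_\bd^a$ (normality being established in the preceding section); hence $(\pi_\bd)_*\cO_{R_\bd}=\cO_{\Fl_\bd^a}$, exactly as in the complete case, and a second application of Proposition~4 of~\cite{MR} yields the desired Frobenius splitting of $\Fl_\bd^a$.

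The only non-formal input is the downward-closedness of $P_\bd$, which is what makes the $\bP^1$-tower construction restrict cleanly through $R_\bd$; once this is in place, everything else is a direct copy of arguments already carried out in the complete case. (Alternatively, one could apply the Mehta-Ramanathan criterion Theorem~\ref{MR} to $R_\bd$ directly, restricting the divisors $Z_{i,j}$ and the anticanonical section formula of Theorem~\ref{K-1} to pairs $(i,j)\in P_\bd$; the same downward-closedness ensures the computation goes through unchanged.)
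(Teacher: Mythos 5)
Your argument is correct, and its second half (normality of $\Fl^a_\bd$ gives $(\pi_\bd)_*\cO_{R_\bd}=\cO_{\Fl^a_\bd}$, then Proposition~4 of \cite{MR}) is exactly the paper's. Where you differ is in how you split $R_\bd$: the paper realizes $R_\bd$ \emph{inside} $R_n$ as the intersection $\bigcap_{(i,j)\notin P_\bd}Z_{i,j}$ of the divisors from the splitting construction, so the last clause of Theorem~\ref{MR} (all intersections $Z_I$ are compatibly split) gives the splitting of $R_\bd$ at once; you instead realize $R_\bd$ as a \emph{quotient stage} of the tower, observing that $P_\bd$ is an order ideal for the partial order $(i',j')\preceq(i,j)\Leftrightarrow i'\le i,\ j'\ge j$ governing the construction in Proposition~\ref{birat}, so that for a suitable linear extension (note it cannot be the paper's specific ordering by decreasing $j-i$, as $P_\bd$ need not be an initial segment there) the forgetful map $p\colon R_n\to R_\bd$ is an iterated $\bP^1$-fibration, and you push the splitting of $R_n$ down along $p$ via Proposition~4 of \cite{MR}. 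Both routes rest on ingredients already established; the check that the fibration argument of Proposition~\ref{birat} only uses the presence of $V_{i-1,j}$ and $V_{i,j+1}$, hence works for any linear extension, is the one small supplement you supply beyond the text, and it is valid. The trade-off: the paper's route yields the stronger statement that $R_\bd$ is \emph{compatibly} split in $R_n$ (potentially useful for vanishing statements on subvarieties), while yours needs no identification of $R_\bd$ with a divisor intersection and only the combinatorics of $P_\bd$ plus the tower structure; your parenthetical alternative (running Theorem~\ref{MR} on $R_\bd$ directly) would additionally require redoing the anticanonical computation of Theorem~\ref{K-1} on $R_\bd$, so as stated it is only a sketch, but your main argument does not depend on it.
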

\begin{proof}
We note that the resolution $R_\bd$ can be realized inside $R_n$ as an intersection
$\bigcap_{(i,j)\notin P_\bd} Z_{i,j}$. Therefore, Theorem \ref{MR} guaranties the Frobenius splitting
for $R_\bd$. Now the normality of $\Fl_\bd^a$ implies the desired
Frobenius splitting for $\Fl_\bd^a$.
\end{proof}

We close this section with the following remark.
The divisors $Z_{i,j}$ produce a cell decomposition for $R_n$.
Namely, introduce the following notations:  $Z_{i,j}=Z_{\beta}$ if
$\beta=\al_{i,j}$; and for a subset
$I\subset R_+$: $Z_I=\bigcap_{\beta\in I} Z_\beta$. We set
$\overset{\circ}{Z}_I:=Z_I\setminus \left( \bigcup_{J\supsetneqq I}Z_J\right)$.
Then we have a cell decomposition
\[
R_n=\bigsqcup_{I\subset R^+}\overset{\circ}{Z}_I.
\]
In general, these cells are different from  the cells of $R_n$ we are using in this paper
(however we conjecture that the codimension one cells do coincide).
For example, the image $\pi_n(\overset{\circ}{Z}_I)$ is not always a cell.  The first example
is $\msl_4$, $I=\{\al_{1,1},\al_{3,3},\al_{1,2},\al_{2,3}\}$. 
In this case $\pi_n(\overset{\circ}{Z}_I)\simeq \bP^1$.

\section{The BW-type theorem and graded character formula}

\subsection{Rational singularities}
We prove that the varieties $\Fl^a_\bd$ over $\overline{\mathbb F}_p$ and over $\bC$ have rational singularities.
Recall the desingularization $Y_\bd$ introduced in the proof of Proposition \ref{Yd}.
\begin{lem}
The variety $\Fl^a_\bd$ is Gorenstein, i.e. the dualizing complex
$K_{\Fl^a_\bd}$ is a line bundle. The resolution $\tau_\bd:\ Y_\bd\to\Fl^a_\bd$
is crepant, i.e. $K_{Y_\bd}=\tau_\bd^* K_{\Fl^a_\bd}$.
\end{lem}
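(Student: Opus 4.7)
The Gorenstein assertion is essentially automatic from the parabolic version of Theorem~\ref{th1} already established in this section: since $\Fl^a_\bd$ is a local complete intersection, its dualizing complex is (a shift of) a line bundle, which we denote $K_{\Fl^a_\bd}$. The substance of the lemma is the crepancy identity $K_{Y_\bd}=\tau_\bd^* K_{\Fl^a_\bd}$.

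The plan is to compute $K_{Y_\bd}$ explicitly from the tower-of-Grassmann-bundles description of $Y_\bd$ in the proof of Proposition~\ref{Yd}, in direct analogy with the computation carried out for $R_n$ in Lemmas~\ref{K}--\ref{om} and Corollary~\ref{lift}. For each pair $1\le i\le j\le k$ let $\cL_{d_i,d_j}$ denote the tautological subbundle on $Y_\bd$ with fiber $V_{d_i,d_j}$, and set $\eta_{i,j}=(\det\cL_{d_i,d_j})^{-1}$. At every stage of the tower one has a Grassmann bundle, for which the relative anti-canonical bundle is
\[
K^{-1}_{\mathrm{Gr}(r,\mathcal E)/B}\;\cong\;(\det\mathcal S)^{\otimes(-\mathrm{rk}\,\mathcal E)}\otimes(\det\mathcal E)^{\otimes r},
\]
with $\mathcal S$ the tautological subbundle. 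Iterating this formula and combining with the relative cotangent sequences at each stage yields an expression $K_{Y_\bd}^{-1}\cong\bigotimes_{i\le j}\eta_{i,j}^{\otimes a_{i,j}}$ with explicit integer exponents $a_{i,j}$.

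The second step is to verify that the off-diagonal contributions ($i<j$) cancel, leaving $K_{Y_\bd}^{-1}\cong\bigotimes_{i=1}^k\eta_{i,i}^{\otimes m_i}$ for positive integers $m_i$. This is the higher-Grassmannian analogue of the collapse observed for $R_n$ in Corollary~\ref{lift}, where $K^{-1}_{R_n}=\bigotimes_i\om_{i,i}^{\otimes 2}$; it is expected because each off-diagonal $V_{d_i,d_j}$ enters both as a subbundle condition at one stage and as the ambient space at the next, so its determinant appears with cancelling multiplicities. Once this is verified, each $\eta_{i,i}$ depends only on the diagonal subspace $V_{d_i,d_i}$ and is therefore the $\tau_\bd$-pullback of a line bundle on $\Fl^a_\bd$; hence $K_{Y_\bd}^{-1}=\tau_\bd^*\mathcal M$ for some line bundle $\mathcal M$ on $\Fl^a_\bd$.

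To conclude, the cell-by-cell analysis in Proposition~\ref{Yd} shows that $\tau_\bd$ is an isomorphism on every codimension-one cell, so its exceptional image has codimension at least two in $\Fl^a_\bd$. On the complement of this image $\tau_\bd$ is an isomorphism, giving $\mathcal M\cong K_{\Fl^a_\bd}^{-1}$ there; normality of $\Fl^a_\bd$ then extends this isomorphism globally to $\mathcal M\cong K_{\Fl^a_\bd}^{-1}$, whence $K_{Y_\bd}=\tau_\bd^* K_{\Fl^a_\bd}$. The main obstacle will be the combinatorial bookkeeping in the second step: tracking the exponents $a_{i,j}$ through a Grassmannian tower with fibers of varying dimensions $d_{i+1}-d_i$ is considerably more delicate than the $\bP^1$-tower case handled for $R_n$, and the cleanest route is likely a careful induction on the length of the tower.
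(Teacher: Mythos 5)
Your opening and closing steps are in fact the paper's entire proof, but the long middle section you interpose is both unproven and, as it turns out, unnecessary — and it is where your argument has a genuine gap. The collapse $K_{Y_\bd}^{-1}\cong\bigotimes_{i=1}^{k}\eta_{i,i}^{\otimes m_i}$ is only asserted (``it is expected'', ``once this is verified''), and by your own admission it is the main obstacle. The heuristic that off-diagonal determinants ``appear with cancelling multiplicities'' does not follow from the $\bP^1$-tower computation of Corollary~\ref{lift}: at a Grassmann-bundle stage the tautological subbundle enters the relative anticanonical bundle with exponent $\mathrm{rk}\,\mathcal E$ and the ambient bundle with exponent $r$, so the exponents are no longer $\pm 1$ and the cancellation genuinely has to be checked by the induction you defer. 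Moreover, your descent argument is run on $\Fl^a_\bd$, so it needs as input that $K_{Y_\bd}^{-1}$ is pulled back from some line bundle $\mathcal M$ on $\Fl^a_\bd$ — which is exactly the step you have not carried out. As written, then, the proposal does not prove the crepancy statement.

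The paper bypasses all of this. Since $\Fl^a_\bd$ is a local complete intersection it is Gorenstein (your first step, same as the paper's, via adjunction), so $\tau_\bd^*K_{\Fl^a_\bd}$ is an honest line bundle on the smooth variety $Y_\bd$. By the proof of Proposition~\ref{Yd}, $\tau_\bd$ is one-to-one outside a closed subset of codimension at least two of $Y_\bd$; on the complementary open set it is an isomorphism onto an open subset of $\Fl^a_\bd$, so there $K_{Y_\bd}$ and $\tau_\bd^*K_{\Fl^a_\bd}$ agree. Two line bundles on the smooth (hence normal, locally factorial) $Y_\bd$ that agree off codimension two coincide, whence $K_{Y_\bd}=\tau_\bd^*K_{\Fl^a_\bd}$. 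In other words, run your final codimension-two/normality argument on $Y_\bd$ itself, comparing $K_{Y_\bd}$ directly with $\tau_\bd^*K_{\Fl^a_\bd}$: then no formula for $K_{Y_\bd}$ and no intermediate bundle $\mathcal M$ are needed, and the fact that $K_{Y_\bd}$ is a pullback becomes the conclusion rather than an input. If you do want the explicit formula, one expects $m_i=d_{i+1}-d_{i-1}$ (with $d_0=0$, $d_{k+1}=n$), generalizing Corollary~\ref{lift}, but establishing it is a separate computation that the lemma does not require.
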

\begin{proof}
We know that $\Fl^a_\bd$ is a locally complete intersection. By the adjunction
formula, it follows that $\Fl^a_\bd$ is Gorenstein. According to the proof
of~Proposition~\ref{Yd}, the map $\tau_\bd:Y_\bd\to \Fl^a_\bd$
is one-to-one off codimension two in $Y_\bd$. Hence, the canonical line bundle
$K_{Y_\bd}$ coincides with $\tau_\bd^*K_{\Fl^a_\bd}$ off codimension two. Hence
the desired equality $K_{Y_\bd}=\tau_\bd^* K_{\Fl^a_\bd}$.
\end{proof}

\begin{rem}
Corollary \ref{lift} says that the canonical line bundle of the complete
degenerate flag variety is given by
$K_{\Fl^a_\bd}=\prod_{i=1}^{n-1} (\om_{i,i}^*)^{\T 2}$.
\end{rem}

\begin{thm}
\label{ssylka}
For the projection $\tau_\bd:\ Y_\bd\to\Fl_\bd^a$ we have a canonical
isomorphism $R(\tau_\bd)_*\cO=\cO$, i.e. $(\tau_\bd)_*\cO=\cO$ and
$R^i(\tau_\bd)_*\cO=0$ for all $i>0$.
\end{thm}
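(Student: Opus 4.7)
The statement decomposes into (a) $(\tau_\bd)_*\cO_{Y_\bd}=\cO_{\Fl^a_\bd}$ and (b) $R^i(\tau_\bd)_*\cO_{Y_\bd}=0$ for all $i>0$. Part (a) is a standard consequence of Zariski's main theorem: $\tau_\bd$ is a proper birational morphism from the smooth variety $Y_\bd$ onto the normal variety $\Fl^a_\bd$ (normality coming from Section~3), so its direct image of the structure sheaf is the structure sheaf of the target.

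For part (b), the key input is the crepant property $K_{Y_\bd}=\tau_\bd^*K_{\Fl^a_\bd}$ established in the preceding lemma. Since $K_{\Fl^a_\bd}$ is invertible (Gorenstein), the projection formula gives
\[
R^i(\tau_\bd)_*K_{Y_\bd}\simeq K_{\Fl^a_\bd}\otimes R^i(\tau_\bd)_*\cO_{Y_\bd},
\]
reducing (b) to the vanishing $R^i(\tau_\bd)_*K_{Y_\bd}=0$ for $i>0$. Over $\bC$ this is exactly the Grauert--Riemenschneider vanishing theorem applied to the proper birational morphism $\tau_\bd$ from the smooth variety $Y_\bd$.

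Over $\overline{\mathbb{F}}_p$, Grauert--Riemenschneider is unavailable, and one must rely instead on Frobenius splitting. The plan is to first show that $Y_\bd$ is itself Frobenius split by an argument parallel to that of Section~4 for $R_n$: the tower of Grassmannian fibrations from the proof of Proposition~\ref{Yd} allows the construction of an anti-canonical section of $Y_\bd$ whose zero divisor satisfies the numerical hypotheses of Theorem~\ref{MR}. Given a compatibly split resolution and the Frobenius splitting of $\Fl^a_\bd$ from Section~4, one can then deduce the vanishing of higher direct images: for any ample $\cL$ on $\Fl^a_\bd$, the pullback $\tau_\bd^*\cL$ is nef and big on $Y_\bd$, and Frobenius-splitting-type vanishing together with the Leray spectral sequence and Serre vanishing force $H^0(\Fl^a_\bd,\cL^N\otimes R^i(\tau_\bd)_*\cO_{Y_\bd})=0$ for $i>0$ and $N\gg 0$, hence $R^i(\tau_\bd)_*\cO_{Y_\bd}=0$ by the ampleness of~$\cL$.

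The main obstacle is the positive-characteristic case: producing a Frobenius splitting of $Y_\bd$ in a form compatible with that of $\Fl^a_\bd$, and then transferring the resulting cohomological information into the required sheaf-theoretic vanishing of $R^i(\tau_\bd)_*\cO_{Y_\bd}$, requires considerably more care than the clean Grauert--Riemenschneider appeal in characteristic zero.
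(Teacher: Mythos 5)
Your part (a) (normality plus Zariski's main theorem) and your characteristic-zero treatment of part (b) (crepancy, projection formula, Grauert--Riemenschneider) coincide with the paper's argument. The gap is in the characteristic-$p$ half. The step ``$Y_\bd$ is Frobenius split, $\tau_\bd^*\cL$ is nef and big, hence Frobenius-splitting-type vanishing plus Leray and Serre give $H^0(\Fl^a_\bd,\cL^N\T R^i(\tau_\bd)_*\cO)=0$'' does not work: the only vanishing that plain Frobenius splitting provides is for \emph{ample} line bundles, via the injections $H^i(Y_\bd,\cM)\hk H^i(Y_\bd,\cM^{p^\nu})$. For $\cM=\tau_\bd^*\cL$ the Leray spectral sequence together with Serre vanishing identifies $H^i(Y_\bd,\tau_\bd^*\cL^{p^\nu})$ with $H^0(\Fl^a_\bd,\cL^{p^\nu}\T R^i(\tau_\bd)_*\cO)$ for $\nu\gg 0$, which is exactly the group you are trying to kill, so the argument is circular. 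Moreover the implicit general principle (split variety plus nef and big line bundle implies vanishing of higher cohomology) is false: the minimal resolution of the cone over an ordinary elliptic curve is Frobenius split, the pullback of an ample bundle is nef and big, yet $R^1$ of the structure sheaf is nonzero because the singularity is not rational.

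What is actually needed, and what the paper does, is a \emph{compatible} splitting statement rather than bare splitting of $Y_\bd$: the Mehta--Ramanathan construction (Theorem \ref{MR}) produces a splitting of the resolution coming from a section of $\om^{1-p}$ vanishing on the divisors $Z_{i,j}$, and the characteristic-$p$ substitute for Grauert--Riemenschneider is the Mehta--van der Kallen type theorem (Theorem 1.3.14 of \cite{BK}): if the resolution is split compatibly with a divisor containing the exceptional locus, then $R^i(\tau_\bd)_*K_{Y_\bd}=0$ for $i>0$; your crepancy/projection-formula reduction then finishes as in characteristic zero. The nontrivial geometric input, which your sketch does not address, is that the exceptional locus of $\tau_\bd$ is contained in $\bigcup_{i,j}Z_{i,j}$, i.e.\ $\tau_\bd$ is one-to-one off these divisors; this is the content of the paper's Lemma \ref{GRp} (the explicit check that $w_{l+1}\in V_{k,l}$ forces the point into some $Z_{k_0,l}$). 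Without this compatibility and this geometric verification, the positive-characteristic case does not close, and hence neither does the vanishing $R^i(\tau_\bd)_*\cO=0$ over $\overline{\mathbb F}_p$.
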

\begin{proof}
We know that $\Fl_\bd^a$ is normal, so that $(\tau_\bd)_*\cO=\cO$.
Recall the Grauert--Riemenschneider vanishing theorem (\cite{GR}):
\begin{equation}\label{GR}
R^i(\tau_\bd)_* K_{Y_\bd}=0 \text { for all } i>0.
\end{equation}
This theorem holds for all varieties over $\bC$, but not over $\overline{\mathbb F}_p$. 
However, in Lemma \ref{GRp} we show that \eqref{GR} is true over $\overline{\mathbb F}_p$.    
Since $K_{Y_\bd}=\tau_\bd^* K_{\Fl^a_\bd}$ is the pull-back of a line bundle, the projection formula says
\[
R^i(\tau_\bd)_*(\tau_\bd^* \cL)=(\cL\T K^{-1}_{\Fl^a_\bd})\T R^i(\tau_\bd)_* K_{Y_\bd}=0
\]
for any line bundle $\cL$ on $\Fl^a_\bd$. Using the projection formula again, we arrive at the
desired vanishing of higher direct images of $\cO$.
\end{proof}

The following lemma is due to the anonymous referee.
\begin{lem}\label{GRp}
Formula \eqref{GR} is true over $\overline{\mathbb F}_p$.
\end{lem}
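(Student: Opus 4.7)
The plan is to reduce the Grauert--Riemenschneider vanishing over $\overline{\mathbb F}_p$ to a Kodaira-type cohomology vanishing on the smooth Frobenius-split variety $Y_\bd$, and then invoke a positive-characteristic vanishing theorem.

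First I would use the preceding lemma's identification $K_{Y_\bd}=\tau_\bd^*K_{\Fl^a_\bd}$ together with the projection formula, giving $R^i(\tau_\bd)_*K_{Y_\bd}\cong K_{\Fl^a_\bd}\otimes R^i(\tau_\bd)_*\cO_{Y_\bd}$. So the lemma reduces to proving $R^i(\tau_\bd)_*\cO_{Y_\bd}=0$ for $i>0$. Fix an ample line bundle $\cL$ on $\Fl^a_\bd$. For $m\gg 0$ the coherent sheaves $R^i(\tau_\bd)_*\cO_{Y_\bd}\otimes\cL^m$ are globally generated by Serre's theorem on $\Fl^a_\bd$, so killing the sheaves is the same as killing their global sections. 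The Leray spectral sequence
$$E_2^{p,q}=H^p\!\left(\Fl^a_\bd,\,R^q(\tau_\bd)_*\cO_{Y_\bd}\otimes\cL^m\right)\;\Rightarrow\;H^{p+q}(Y_\bd,\tau_\bd^*\cL^m)$$
degenerates for large $m$, again by Serre vanishing applied to the coherent sheaves $R^q(\tau_\bd)_*\cO_{Y_\bd}$; thus $H^0(\Fl^a_\bd,R^q(\tau_\bd)_*\cO_{Y_\bd}\otimes\cL^m)\cong H^q(Y_\bd,\tau_\bd^*\cL^m)$, and the lemma is reduced to showing $H^q(Y_\bd,\tau_\bd^*\cL^m)=0$ for $q>0$ and $m\gg 0$.

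Second, I would exploit that $Y_\bd$ is smooth projective and Frobenius split. The splitting is produced by essentially the argument of Section 4: the tower-of-Grassmannian-fibrations description given in the proof of Proposition \ref{Yd} allows one to construct, in parallel with Theorem \ref{K-1}, a global section of $K_{Y_\bd}^{-1}$ whose zero divisor meets the hypotheses of the Mehta--Ramanathan criterion (Theorem \ref{MR}). The line bundle $\tau_\bd^*\cL$ on $Y_\bd$ is nef and big, and is globally generated in sufficiently high powers (since $\cL$ is very ample on $\Fl^a_\bd$ after passing to a power). The desired vanishing of $H^q(Y_\bd,\tau_\bd^*\cL^m)$ then follows from the Kodaira-type vanishing theorem for smooth projective Frobenius-split varieties equipped with semi-ample and big line bundles coming with a compatible splitting (see \cite{BK}).

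The main obstacle is precisely this last step. In positive characteristic, Kodaira vanishing for Frobenius-split smooth projective varieties is only straightforward for genuinely ample line bundles, while $\tau_\bd^*\cL$ is merely nef-and-big; general Kawamata--Viehweg vanishing fails in characteristic $p$. The way out relies on the explicit tower-by-tower form of the splitting section of $K_{Y_\bd}^{-1}$, whose zero divisor is set up to be compatible with pullbacks from $\Fl^a_\bd$ of the divisors coming from a very ample $\cL^m$. This compatibility supplies the semi-ample-and-big variant of Kodaira vanishing needed for our specific $\tau_\bd^*\cL^m$, and completes the proof of \eqref{GR} over $\overline{\mathbb F}_p$.
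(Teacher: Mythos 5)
Your first two reductions are fine: crepancy plus the projection formula turn \eqref{GR} into $R^{>0}(\tau_\bd)_*\cO_{Y_\bd}=0$, and the Leray/Serre argument correctly identifies, for $m\gg 0$, $H^q(Y_\bd,\tau_\bd^*\cL^m)\cong H^0(\Fl^a_\bd,R^q(\tau_\bd)_*\cO_{Y_\bd}\otimes\cL^m)$ with $R^q(\tau_\bd)_*\cO_{Y_\bd}\otimes\cL^m$ globally generated. The gap is the last step, and it is not a technicality: there is no ``Kodaira-type vanishing for semi-ample and big line bundles on smooth Frobenius split varieties'' to invoke. Frobenius splitting gives injections $H^q(Y_\bd,\mathcal M)\hookrightarrow H^q(Y_\bd,\mathcal M^{p^\nu})$, which yields vanishing only when Serre vanishing applies to the high twists, i.e.\ when $\mathcal M$ is ample; $\tau_\bd^*\cL^m$ is merely semi-ample and big and its Frobenius powers never become ample. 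In fact the statement you need is false in this generality: for the resolution $Z$ of the affine cone $X$ over an ordinary elliptic curve, $Z$ is Frobenius split and the pullback of an ample bundle is semi-ample and big, yet $H^1(Z,f^*A^m)=H^0(X,A^m\otimes R^1f_*\cO_Z)\neq 0$ because the cone is not a rational singularity. Moreover, by your own Leray identification, asserting $H^{>0}(Y_\bd,\tau_\bd^*\cL^m)=0$ is literally equivalent to the statement being proved, so the concluding appeal to a ``compatibility'' of the splitting divisor with pullbacks of divisors of $\cL^m$ begs the question unless it is turned into a precise hypothesis and verified.

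The missing precise ingredient is the relative vanishing theorem of Mehta--van der Kallen (\cite{BK}, Theorem 1.3.14): if a smooth variety is split by $\sigma^{p-1}$ for a section $\sigma$ of the anticanonical bundle, and the exceptional locus of a proper morphism is contained in the zero divisor of $\sigma$, then the higher direct images of the canonical bundle vanish. The splitting coming from Theorem \ref{MR} is exactly of this form, with zero divisor containing $\sum Z_{i,j}$, so the whole lemma reduces to a concrete geometric verification which your proposal never makes and which is the actual content of the paper's proof: $\tau_\bd$ is one-to-one off $\bigcup_{i,j}Z_{i,j}$. Concretely, if $\bV\notin\bigcup Z_{i,j}$ then $w_{l+1}\notin V_{k,l}$ for all $k,l$ (otherwise, taking $k_0$ minimal with $w_{l+1}\in V_{k_0,l}$, one finds $\bV\in Z_{k_0,l}$), hence $V_{k,l+1}=pr_{l+1}V_{k,l}$ and all off-diagonal spaces are determined inductively by the diagonal ones. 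With that containment of the exceptional locus in the splitting divisor established, \eqref{GR} over $\overline{\mathbb F}_p$ follows directly from \cite{BK}, Theorem 1.3.14 applied to $R^i(\tau_\bd)_*K_{Y_\bd}$ --- no Leray reduction or absolute vanishing on $Y_\bd$ is needed. Replace your final step by this argument and the proof goes through.
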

\begin{proof}
Theorem \ref{MR} together with Theorem 1.3.14 from \cite{BK} imply \eqref{GR} over
$\overline{\mathbb F}_p$ provided that $\tau_\bd$ is one-to-one outside the union of all divisors
$Z_{i,j}$. So our goal is to show that if $\bV\in Y_\bd$ satisfy $\bV\notin \bigcup_{i,j} Z_{i,j}$,
then all the entries $V_{k,l}$ are determined by the diagonal subspaces $V_{k,k}$. Since 
$pr_{l+1} V_{k,l}\subset V_{k,l+1}$ it suffices to prove that $w_{l+1}\notin V_{k,l}$ for all $k,l$.
Assume that for some $k,l$ we have $w_{l+1}\in V_{k,l}$. Let $k_0$ be the smallest number such that
$w_{l+1}\in V_{k_0,l}$. If $k_0=1$, then $\bV\in Z_{1,l}$. Now let $k_0>1$. Then 
$pr_{l+1} V_{k_0-1,l}=V_{k_0-1,l+1}$ (since $w_{l+1}\notin V_{k_0-1,l}$). Therefore 
we arrive at 
\[
V_{k_0,l}=V_{k_0-1,l}\oplus \spa(w_{l+1})=V_{k_0-1,l+1}\oplus \spa(w_{l+1})
\]     
and thus $\bV\in Z_{k_0,l}$.
\end{proof}

\subsection{The BW-type theorem}
In this subsection we prove an analogue of the Borel-Weil theorem. 
Let $\la$ be a dominant integral weight.
Consider the map $\imath_\la: \Fl^a_n\to \bP(V_\la^a)$. Define a line bundle
$\cL_\la=\imath_\la^* \cO(1)$ on $\Fl^a_n$.

\begin{prop}\label{van} We have
\[
H^{>0}(\Fl^a_n,\cL_\la)=H^{>0}(R_n,\pi_n^*\cL_\la)=0.
\]
\end{prop}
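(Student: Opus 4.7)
The plan is to first reduce both vanishings to a single one via the desingularization, then apply Frobenius splitting.

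First, I would establish a natural isomorphism $H^i(\Fl^a_n,\cL_\la) \simeq H^i(R_n,\pi_n^*\cL_\la)$ for every $i$, via the projection formula applied to $\pi_n$. This requires $R(\pi_n)_*\cO_{R_n} = \cO_{\Fl^a_n}$, i.e.\ that $\pi_n$ is a rational resolution. Normality of $\Fl^a_n$ (established in Section 3) gives $(\pi_n)_*\cO_{R_n} = \cO_{\Fl^a_n}$, and the vanishing $R^{>0}(\pi_n)_*\cO_{R_n} = 0$ follows by the same scheme of argument as Theorem \ref{ssylka}: over $\bC$ via Grauert--Riemenschneider (since $K_{R_n} = \pi_n^*K_{\Fl^a_n}$ off codimension two, by Lemma \ref{smooth} and Corollary \ref{lift}), and over $\overline{\mathbb F}_p$ by the analog of Lemma \ref{GRp} applied to $\pi_n$, which combines the compatible Frobenius splitting of $R_n$ along the divisors $Z_{i,j}$ (Theorem \ref{MR}) with Theorem 1.3.14 of \cite{BK}. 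Thus it suffices to prove $H^{>0}(R_n,\pi_n^*\cL_\la)=0$.

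Next I would work over $\overline{\mathbb F}_p$, the char $0$ case following from semicontinuity for a flat integral model (the flat degeneration of Proposition \ref{prop}.1 extends to this setting). Since $R_n$ is Frobenius split (from Theorem \ref{K-1} and the corollary that $\bigotimes_i\om_{i,i}\T\bigotimes_i\om_{i,i+1}$ is base-point free), Proposition 1 of \cite{MR} reduces the problem to exhibiting some $m_0$ with $H^{>0}\!\left(R_n, (\pi_n^*\cL_\la)^{\T m}\right) = 0$ for every $m \geq m_0$. For this, the composition $\imath_\la \circ \pi_n\colon R_n \to \bP(V_\la^a)$ factors through a surjection $f\colon R_n \to \Fl^a_\la = \imath_\la(\Fl^a_n)$, and $\pi_n^*\cL_\la = f^*\cO_{\Fl^a_\la}(1)$. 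Since $\cO(1)$ is ample on $\Fl^a_\la$, Serre vanishing gives $H^{>0}(\Fl^a_\la,\cO(m)) = 0$ for $m \gg 0$, so together with the equality $Rf_*\cO_{R_n} = \cO_{\Fl^a_\la}$ the projection formula and Leray spectral sequence deliver the required large-power vanishing on $R_n$.

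The main obstacle is establishing $Rf_*\cO_{R_n} = \cO_{\Fl^a_\la}$. I would factor $f = p \circ \pi_n$ where $p\colon \Fl^a_n \to \Fl^a_\la$ is the natural forgetful projection; combined with the already-known $R(\pi_n)_*\cO = \cO$, the claim reduces to $Rp_*\cO_{\Fl^a_n} = \cO_{\Fl^a_\la}$. The latter follows from the rational singularities of $\Fl^a_\la$ together with the existence of a desingularization $R_\bd \to \Fl^a_\la$ compatible with the natural surjection $R_n \to R_\bd$ from Section \ref{decomp}. As an alternative bypassing $\Fl^a_\la$, one can work directly on $R_n$: write $\pi_n^*\cL_\la = \bigotimes_i \om_{i,i}^{\T a_i}$ for $\la = \sum a_i\om_i$, and combine base-point-freeness of each $\om_{i,i}$ with the explicit compatible splitting along $\sum Z_{i,j}$ (Theorem \ref{K-1}) to deduce directly that sufficiently high tensor powers of $\pi_n^*\cL_\la$ have vanishing higher cohomology.
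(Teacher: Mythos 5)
Your argument is correct and is, at bottom, a rearrangement of the paper's own (second) proof: it relies on exactly the same ingredients --- the identification $H^i(\Fl^a_n,\cL_\la)\simeq H^i(R_n,\pi_n^*\cL_\la)$ coming from $R(\pi_n)_*\cO=\cO$ (which is precisely Theorem \ref{ssylka} in the complete case, since $Y_\bd=R_n$ and $\tau_\bd=\pi_n$ for $\bd=(1,\dots,n-1)$), Frobenius splitting together with Proposition 1 of \cite{MR}, Serre vanishing for the ample bundle $\cL'_\la$ on the image $\Fl^a_\bd\subset\bP(V_\la^a)$, and the square formed by $R_n\to R_\bd\to\Fl^a_\bd$ and $\Fl^a_n\to\Fl^a_\bd$. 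The genuine difference is where the splitting criterion is applied: the paper applies it downstairs, on $\Fl^a_n$ (regular $\la$) or $\Fl^a_\bd$ (general $\la$), where the bundle is very ample so the large-power vanishing is immediate, and then transports the vanishing up to $R_n$; you apply it upstairs on $R_n$, where $\pi_n^*\cL_\la$ is only semiample, and reimport Serre vanishing through $Rf_*\cO_{R_n}=\cO_{\Fl^a_\bd}$. This buys a uniform treatment of regular and non-regular $\la$, at the cost of making $Rf_*\cO=\cO$ the load-bearing step; note that your derivation of it (whether phrased via $Rp_*\cO_{\Fl^a_n}$ or directly via $f=\pi_\bd\circ\eta$) implicitly requires $R\eta_*\cO_{R_n}=\cO_{R_\bd}$ for the forgetful map $\eta\colon R_n\to R_\bd$, i.e.\ the fact, used explicitly in the paper, that $\eta$ is a fibration whose fibers are towers of $\bP^1$-fibrations --- ``compatible'' alone does not deliver this. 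Two smaller points: the characteristic-zero case is obtained by spreading $R_n$, $\Fl^a_n$, $\cL_\la$ out over an open part of $\mathrm{Spec}\,\bZ$ and using semicontinuity, not from the flat degeneration $\Fl_n\to\Fl^a_n$ of Proposition \ref{prop}; and your closing ``alternative'' is too weak as stated, since base-point-freeness of the $\om_{i,i}$ by itself gives no cohomology vanishing for high powers --- that is exactly the point where the paper's first proof invokes the relative $D_1$-splitting of $R_n$ and Theorem 1.4.8 of \cite{BK}, with $D_1$ pulled back from $\Fl^a_n$.
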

\begin{proof}
We give two proofs here. The first short one is due to the referee and uses deep results 
on Frobenius splitting from \cite{BK}. The second one uses partial degenerate flag varieties
and their desingularizations.

Recall (see \cite{BK}, Definition 1.4.1) that a scheme $X$ is called Frobenius split relative 
to a Cartier divisor $D$ (or simply $D$-split) if there exists a $\cO_X$-linear map
$\psi: F_*(\cO_X(D))\to \cO_X$ such that for a canonical section $\sigma$ of 
$\cO_X(D)$ the composition 
\[
\phi=\psi\circ F_*(\sigma):F_*\cO_X\to \cO_X
\] 
is a Frobenius splitting for $X$ (i.e. the composition 
$\cO_X\to F_*\cO_X\stackrel{\phi}{\longrightarrow}\cO_X$ is the identity map). 
Combining Theorem \ref{MR}, Theorem  \ref{K-1}
and \cite{BK}, Proposition 1.4.12,  we obtain that $R_n$ is 
$(p-1)(D_1+D_2)$-split, where 
$D_1$ is the divisor of zeroes of $\bigotimes_{i=1}^{n-1} \om_{i,i}$ and $D_2$ is the divisor of zeroes of
$\bigotimes_{i=1}^{n-2} \om_{i,i+1}$. Using \cite{BK}, Remark 1.4.2, (ii) we obtain that $R_n$ is
$D_1$-split as well. Since $D_1$ is pulled back from a line bundle on $\Fl^a_n$, our Lemma follows from
\cite{BK}, Theorem 1.4.8 (i).  

Now let us give the second proof.
We note that since $\Fl^a_n$ has rational singularities,
the equalities
\[\
H^k(\Fl^a_n,\cL_\la)\simeq H^k(R_n,\pi_n^*\cL_\la)
\]
hold for all $k\ge 0$.
Now assume
$\la$ is regular. Then since the map  $\Fl^a_n\to \bP(V_\la^a)$ is an embedding, the line bundle
$\cL_\la$ is very ample. Therefore, for any $k$ and big enough $N$ one has
$H^k(\Fl_n^a,\cL_\la^{\T N})=0$. This implies $H^k(\Fl_n^a,\cL_\la)=0$, because
$\Fl^a_n$ is Frobenius split over $\overline{\mathbb F}_p$ for any $p$.
For a non regular $\la$, let $\Fl^a_\bd$ be the corresponding
degenerate parabolic
flag variety, which is embedded into $\bP(V_\la^a)$. Then we have the following
commutative diagram of projections:
$$
\begin{CD}
R_n @>\eta>> R_\bd\\
@V\pi_n VV  @VV\pi_\bd V\\ 
\Fl_n^a @>\mu>> \Fl^a_\bd
\end{CD}
$$
Let $\cL'_\la$ be a line bundle on $\Fl^a_\bd$  which is the pull back of the bundle $\cO(1)$
on $\bP(V_\la^a)$. Then $\cL_\la=\mu^*\cL'_\la$.
Since $\cL'_\la$ is very ample, and $\Fl^a_\bd$ is Frobenius split over
$\overline{\mathbb F}_p$ for any $p$, $H^k(\Fl^a_\bd,\cL'_\la)=0$
(for positive $k$). Since $\Fl^a_\bd$ has rational singularities,
$H^k(R_\bd,\pi_\bd^*\cL'_\la)=H^k(\Fl^a_\bd,\cL'_\la)(=0$ for positive $k$).
Now since $\eta$ is a fibration with the fibers being towers of successive
$\bP^1$-fibrations, we obtain $H^k(R_n,\eta^*\pi_\bd^*\cL'_\la)=
H^k(R_\bd,\pi_\bd^*\cL'_\la)(=0$ for positive $k$). Finally, since $\Fl_n^a$
has rational singularities,
and $\eta^*\pi_\bd^*\cL'_\la=\pi_n^*\cL_\la$, we arrive at $H^k(\Fl_n^a,\cL_\la)=
H^k(R_n,\pi_n^*\cL_\la)=H^k(R_n,\eta^*\pi_\bd^*{\cL'}_\la)(=0$ for $k>0$).
\end{proof}

\begin{prop}
\label{ivan}
We have
$H^0(\Fl^a_n,\cL_\la)^*\simeq H^0(R_n,\pi_n^*\cL_\la)^*\simeq V_\la^a$.
\end{prop}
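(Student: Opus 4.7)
The plan is to construct a natural $G^a$-equivariant linear injection $(V_\la^a)^* \hookrightarrow H^0(\Fl^a_n, \cL_\la)$ and to promote it to a bijection by comparing dimensions with the classical flag variety via the PBW flat degeneration of \cite{Fe3}, \cite{Fe4}. Dualizing will then give the Borel-Weil isomorphism $H^0(\Fl^a_n, \cL_\la)^* \simeq V_\la^a$ of $G^a$-modules. The identification with $H^0(R_n, \pi_n^*\cL_\la)^*$ is already built into the proof of Proposition \ref{van}, which yields $H^\bullet(\Fl^a_n, \cL_\la) \simeq H^\bullet(R_n, \pi_n^*\cL_\la)$ in every degree thanks to the rational singularity of $\Fl^a_n$.

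For the injection I take the restriction map along $\imath_\la$,
\[
(V_\la^a)^* = H^0(\bP(V_\la^a), \cO(1)) \longrightarrow H^0(\Fl^a_n, \imath_\la^*\cO(1)) = H^0(\Fl^a_n, \cL_\la),
\]
which is tautologically $G^a$-equivariant. Its injectivity amounts to the statement that $\Fl^a_n$ is linearly non-degenerate in $\bP(V_\la^a)$. This holds because $\Fl^a_n$ contains $G^a\cdot[v_\la]$, and the $\bC$-linear span of the set $G^a\cdot v_\la\subset V_\la^a$ equals $U(\g^a)v_\la=V_\la^a$ by the very construction of the cyclic module $V_\la^a$.

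For surjectivity I compute $\dim H^0(\Fl^a_n,\cL_\la)$ through the PBW degeneration of $\Fl_n$ to $\Fl^a_n$. The Rees module attached to the PBW filtration of $V_\la$ gives a flat family $\mathcal{V}\to\mathbb{A}^1$ of vector spaces with general fiber $V_\la$ and special fiber $V_\la^a$; inside the associated projective bundle $\bP(\mathcal{V})$ sits a flat family $\mathcal{X}\to\mathbb{A}^1$ of projective varieties with general fiber $\Fl_n$ and special fiber $\Fl^a_n$, and the tautological $\cO(1)$ restricts to $\cL_\la$ on each fiber. By Proposition \ref{van} together with the classical Borel--Weil--Bott vanishing, every fiber satisfies $H^{>0}=0$ for this line bundle; Grauert's theorem on cohomology and base change then forces $\dim H^0$ to be locally constant on $\mathbb{A}^1$, giving
\[
\dim H^0(\Fl^a_n,\cL_\la)=\dim H^0(\Fl_n,\cL_\la)=\dim V_\la=\dim V_\la^a.
\]
Combined with the injection above, the restriction map is therefore an isomorphism, and dualization completes the proof. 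The main technical point to nail down is the simultaneous existence of the line bundle on the total space $\mathcal{X}$ of the flat family, which is ensured by realizing the whole degeneration inside a single projective bundle over $\mathbb{A}^1$; the $G^a$-equivariance of the final isomorphism is automatic from the $G^a$-equivariance of $\imath_\la$.
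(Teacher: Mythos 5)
Your proposal is correct and follows essentially the same route as the paper: injectivity of the restriction map $(V_\la^a)^*\to H^0(\Fl^a_n,\cL_\la)$ from the fact that the open $G^a$-orbit (equivalently $(N^-)^a\cdot v_\la$) spans $V_\la^a$, and surjectivity by a dimension count using the flat degeneration of $\Fl_n$ to $\Fl^a_n$ together with the higher cohomology vanishing of Proposition \ref{van}, the comparison with $R_n$ coming from rational singularities. The only difference is that you spell out the semicontinuity/base-change details of the flat family that the paper leaves implicit by citing \cite{Fe3}, \cite{Fe4}.
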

\begin{proof}
We note that there exists an embedding $(V_\la^a)^*\hk H^0(\Fl^a_n,\cL_\la)$.
In fact take an element $v\in (V_\la^a)^*\simeq H^0(\bP(V_\la^a),\cO(1))$.
Then restricting to the embedded variety $\Fl^a_n$ we obtain a section of $\cL_\la$. Assume that
it is zero. Then $v$ vanishes on the open cell $(N^-)^a\cdot \bC v_\la$. But
the linear span of the elements of this cell coincides with the whole representation $V_\la^a$.
Therefore, the restriction map  $(V_\la^a)^*\to H^0(\Fl^a_n,\cL_\la)$ is an embedding.

We recall that the varieties $\Fl_n^a$ are flat degenerations of the classical flag
varieties. Since the higher cohomology of $\cL_\la$ vanish 
(see Proposition~\ref{van}), 
we arrive at the equality of the dimensions
of $H^0(\Fl_n^a,\cL_\la)$ and of $V_\la$.
Therefore, the embedding  $(V_\la^a)^*\to H^0(\Fl^a_n,\cL_\la)$ is an isomorphism.
\end{proof}

Combining Propositions~\ref{van} and~\ref{ivan} we obtain the analogue of
the Borel-Weil theorem for degenerate flags: 

\begin{thm}
\label{divan}
We have
\begin{gather*}
H^0(\Fl^a_n,\cL_\la)^*\simeq H^0(R_n,\pi_n^*\cL_\la)^*\simeq V_\la^a,\\
H^{>0}(\Fl^a_n,\cL_\la)=H^{>0}(R_n,\pi_n^*\cL_\la)=0.
\end{gather*}
\end{thm}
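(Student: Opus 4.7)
The statement is essentially the conjunction of Propositions~\ref{van} and~\ref{ivan}, both of which are already available in the excerpt, so the plan is to combine them and pin down the $R_n$ side of each assertion.

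First, I would dispose of the statements on $\Fl^a_n$: the $H^0$ identification is exactly Proposition~\ref{ivan}, and the higher vanishing is exactly Proposition~\ref{van}. Both rely on (i) Frobenius splitting of $\Fl^a_n$ over $\overline{\mathbb F}_p$, proved earlier via Theorem~\ref{MR} applied to $R_n$ and the computation of $K^{-1}_{R_n}$ in Theorem~\ref{K-1}, (ii) flatness of $\Fl^a_n$ as a degeneration of the classical $\Fl_n$ (Proposition~\ref{prop}), which pins down $\dim H^0$, and (iii) rational singularities from Theorem~\ref{ssylka}, which is used implicitly in the vanishing argument.

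To transfer the two assertions to $R_n$, I would invoke Theorem~\ref{ssylka}: since $R(\pi_n)_*\cO_{R_n}=\cO_{\Fl^a_n}$, the projection formula yields
\[
R(\pi_n)_*\pi_n^*\cL_\la=\cL_\la\otimes R(\pi_n)_*\cO_{R_n}=\cL_\la,
\]
so $H^k(R_n,\pi_n^*\cL_\la)\simeq H^k(\Fl^a_n,\cL_\la)$ for every $k\ge 0$. The two required statements for $R_n$ then follow from the corresponding statements on $\Fl^a_n$.

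There is no real obstacle left: everything nontrivial (Frobenius splitting, rational singularities, normality, flatness, construction of the embedding $(V^a_\la)^*\hookrightarrow H^0(\Fl^a_n,\cL_\la)$ via the open $(N^-)^a$-orbit) is contained in the preceding sections. The only thing to be careful about is that Proposition~\ref{van} treats the non-regular case by passing through $\Fl^a_\bd$ and $R_\bd$ via the commutative diagram with $\eta:R_n\to R_\bd$; one should check that the argument handles both parts of Theorem~\ref{divan} uniformly in $\la$, which it does, since $\cL_\la=\mu^*\cL'_\la$ and the fibers of $\eta$ are towers of $\mathbb P^1$-fibrations with trivial higher cohomology.
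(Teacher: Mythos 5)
Your proposal is correct and follows essentially the same route as the paper: Theorem~\ref{divan} is obtained there precisely by combining Propositions~\ref{van} and~\ref{ivan}, whose statements already contain both the $\Fl^a_n$ and the $R_n$ assertions, with the transfer to $R_n$ effected exactly as you do, via $R(\pi_n)_*\cO_{R_n}=\cO_{\Fl^a_n}$ and the projection formula. Your appeal to Theorem~\ref{ssylka} for $\pi_n$ is legitimate since for $\bd=(1,\dots,n-1)$ the variety $Y_\bd$ coincides with $R_n$ and $\tau_\bd=\pi_n$.
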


Similarly one proves a parabolic version of the BW-type theorem:
\begin{thm}
Let $\la$ be a $\bd$-dominant weight, i.e. $(\la,\om_d)>0$ implies $d\in\bd$.
Then there exists a map $\imath_\la:\Fl^a_\bd\to\bP(V_\la^a)$.
We have
$$
H^0(\Fl^a_\bd,\imath_\la^*\cO(1))^*\simeq V_\la^a,\
H^{>0}((\Fl^a_\bd,\imath_\la^*\cO(1)))=0.
$$
\end{thm}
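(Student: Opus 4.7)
The plan is to imitate the proof of Theorem~\ref{divan}. Set $\bd':=\{d\in\bd : (\la,\om_d)>0\}\subseteq \bd$; since $\la$ is $\bd$-dominant, $\bd'$ records precisely the fundamental weights occurring in $\la$. The map $\imath_\la$ then factors as $\Fl^a_\bd\overset{\mu}{\twoheadrightarrow}\Fl^a_{\bd'}\overset{\iota}{\hookrightarrow}\bP(V_\la^a)$, where $\mu$ is the natural projection (forgetting those components $V_{d_i}$ with $d_i\notin \bd'$), $\iota$ is an embedding, and $\cL_\la':=\iota^*\cO(1)$ is very ample on $\Fl^a_{\bd'}$; in particular $\imath_\la^*\cO(1)=\mu^*\cL_\la'$.

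To establish the vanishing $H^{>0}(\Fl^a_\bd,\imath_\la^*\cO(1))=0$, I would first handle $\Fl^a_{\bd'}$ by the Frobenius-splitting argument used in Proposition~\ref{van}: very ampleness of $\cL_\la'$ yields $H^{>0}(\Fl^a_{\bd'},(\cL_\la')^{\T N})=0$ for $N\gg 0$, and combined with Frobenius splitting of $\Fl^a_{\bd'}$ from Section~4, this forces $H^{>0}(\Fl^a_{\bd'},\cL_\la')=0$. To push this up to $\Fl^a_\bd$, I would use the commutative square
$$
\begin{CD}
R_\bd @>\eta>> R_{\bd'}\\
@V\pi_\bd VV @VV\pi_{\bd'} V\\
\Fl^a_\bd @>\mu>> \Fl^a_{\bd'}
\end{CD}
$$
in which $\eta$ forgets the components $V_{i,j}$ with $(i,j)\in P_\bd\setminus P_{\bd'}$ and, as in the complete case, is a tower of $\bP^1$-fibrations, so $R\eta_*\cO_{R_\bd}=\cO_{R_{\bd'}}$. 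The projection formula then gives $H^i(R_\bd,\eta^*\pi_{\bd'}^*\cL_\la')\simeq H^i(R_{\bd'},\pi_{\bd'}^*\cL_\la')$, and rational singularities of $\Fl^a_\bd$ and $\Fl^a_{\bd'}$ (Theorem~\ref{ssylka}) identify these respectively with $H^i(\Fl^a_\bd,\imath_\la^*\cO(1))$ and $H^i(\Fl^a_{\bd'},\cL_\la')$, yielding the desired vanishing.

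For the identification of $H^0$, I repeat the argument of Proposition~\ref{ivan}. The dense $(N^-)^a$-orbit of $[v_\la]$ in $\Fl^a_\bd$ has linear span equal to $V_\la^a$, so restriction of linear functionals from $\bP(V_\la^a)$ gives an embedding $(V_\la^a)^*\hookrightarrow H^0(\Fl^a_\bd,\imath_\la^*\cO(1))$. By Proposition~\ref{propgen}~(1), $\Fl^a_\bd$ is a flat degeneration of the classical partial flag variety $\Fl_\bd$, with $\imath_\la^*\cO(1)$ the limit of the classical very ample line bundle $\cL^{\mathrm{cl}}_\la$ realizing $V_\la$ by the ordinary Borel--Weil theorem. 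Since higher cohomology vanishes on both the degenerate and generic fibres, the dimension $\dim H^0(\Fl^a_\bd,\imath_\la^*\cO(1))=\dim V_\la=\dim V_\la^a$ is constant in the family, forcing the embedding to be an isomorphism.

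The main obstacle I foresee is verifying that the forgetful map $\eta\colon R_\bd\to R_{\bd'}$ is indeed a tower of $\bP^1$-fibrations with the resulting cohomological triviality $R\eta_*\cO_{R_\bd}=\cO_{R_{\bd'}}$; this should follow by ordering the roots in $P_\bd\setminus P_{\bd'}$ as in Section~2 and running an inductive argument parallel to the one for $R_n\to R_\bd$, but some bookkeeping is required to confirm that the parabolic constraints do not disturb the $\bP^1$-fibration structure at each step. Everything else is a direct adaptation of the arguments for the complete case.
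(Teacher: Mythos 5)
Your proposal is correct and follows essentially the same route as the paper, which proves the parabolic case "similarly" to Theorem \ref{divan}: factor $\imath_\la$ through $\Fl^a_{\bd'}$, get vanishing there from very ampleness plus Frobenius splitting, transfer it via the desingularizations and rational singularities, and identify $H^0$ by the dense-orbit embedding together with flat degeneration. The bookkeeping you flag for $\eta\colon R_\bd\to R_{\bd'}$ works out exactly as in the complete case, since $P_\bd$ is closed under passing from $(i,j)$ to $(i-1,j)$ and $(i,j+1)$, so choosing the forgotten components in order of decreasing $j-i$ pins each one between already-determined spaces and yields $\bP^1$-fibers at every step.
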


\subsection{The $q$-character formula}
We now compute the $q$-character (PBW-graded character) of the modules $V_\la^a$
(for combinatorial formula see \cite{FFoL1}). 
For this we use the Atiyah-Bott-Lefschetz fixed points formula applied to the variety $R_n$
(so our formula is an analogue of the Demazure character formula).
Recall that the $T$-fixed points on $R_n$ are labeled by the admissible collections $\bS=(S_{i,j})$,
i.e. those satisfying
$S_{i,j}\subset \{1,\dots,i,j+1,\dots,n\}$, $\# S_{i,j}=i$ and
\begin{equation}\label{dih}
S_{i,j}\subset S_{i+1,j}\subset S_{i+1,j+1}\cup\{j+1\}.
\end{equation}
In order to state the theorem we prepare some notations.
Assume that we have fixed the sets $S_{i-1,j}$ and $S_{i,j+1}$. Then condition \eqref{dih}
says that there exist exactly two variants for $S_{i,j}$,
namely
\[
S_{i,j}=S_{i-1,j}\cup \{a\} \text{ or } S_{i,j}=S_{i-1,j}\cup \{b\},
\]
where $\{a,b\}=S_{i,j+1}\cup\{j+1\}\setminus S_{i-1,j}$. Given a collection $\bS$ we denote
the numbers $a,b$ as above by $a^\bs_{i,j}$ and $b^\bs_{i,j}$. We have:
\[
S_{i,j}=S_{i-1,j}\cup \{a^\bs_{i,j}\},\ \ S_{i,j+1}\setminus S_{i-1,j}=\{a^\bs_{i,j},b^\bs_{i,j}\}.
\]
We denote by $S'_{i,j}$ the set $(S_{i,j}\setminus \{a^\bs_{i,j}\})\cup \{b^\bs_{i,j}\}$.
\begin{example}
Let $n=3$, $S_{1,1}=(2)$, $S_{1,2}=(1)$ and $S_{2,2}=(1,3)$. Then
\[
a^\bS_{1,1}=2, a^\bS_{1,2}=1, a^\bS_{2,2}=3 \text{ and }
b^\bS_{1,1}=1, b^\bS_{1,2}=3, b^\bS_{2,2}=2.
\]
\end{example}

Recall that the variety $R_n$ sits inside the product of Grassmann varieties
$\prod_{1\le i\le j<n} Gr(i,W_{i,j})$.
Each $\bigwedge^i(W_{i,j})$ is acted upon by $\g^a\oplus\bC d$
and therefore each
Grassmannian  carries a natural action of the group $G^a\rtimes\bC^* $, where the additional $\bC^*$ part
corresponds to the PBW-grading operator. So we have an $n$-dimensional torus $T\rtimes \bC^*$
acting on $Gr(i,W_{i,j})$. A $T$-fixed point $p(\bS)\in R_n$ is a product of the fixed points
$p(S_{i,j})\in Gr(i,W_{i,j})$. We denote by $\gamma(S_{i,j})\in\h^*\oplus\bC d$ the (extended)
weight of the vector $p(S_{i,j})\in W_{i,j}$.
Explicitly, let $S_{i,j}=(l_1,\dots,l_i)$. Then
\[
\gamma(S_{i,j})=(\om_{l_1}-\om_{l_1-1})+\dots +(\om_{l_i}-\om_{l_i-1}) + \#\{r:\ l_r>i\}d.
\]
(here $\om_0=\om_n=0$).
For an element
$$\gamma=m_1\om_1+\dots +m_{n-1}\om_{n-1}+md^*\in \h^*\oplus\bC d^*$$
we denote by $e^\gamma$ the element $(e^{\om_1})^{m_1}\dots (e^{\om_{n-1}})^{m_{n-1}} q^m$
in the group algebra (so, $q=e^{d^*}$).
\begin{example}
Let $z_1=e^{\om_1}, \dots, z_{n-1}=e^{\om_{n-1}}$. Then for $S_{i,j}=(l_1,\dots,l_i)$
\[
e^{\gamma(S_{i,j})}=z_{l_1}z_{l_1-1}^{-1}\dots z_{l_i}z_{l_i-1}^{-1} q^{\#\{r:\ l_r>i\}}.
\]
In particular, for $n=3$, $i=j=1$ we have
\begin{gather*}
S_{1,1}=(1):\ \ \gamma(1,1)=\om_1,\ e^{\gamma(1,1)}=z_1;\\
S_{1,1}=(2):\ \ \gamma(1,1)=\om_2-\om_1+d,\ e^{\gamma(1,1)}=z_1^{-1}z_2q;\\
S_{1,1}=(3):\ \ \gamma(1,1)=-\om_2 +d,\ e^{\gamma(1,1)}=z_2^{-1}q.
\end{gather*}
\end{example}

We need one more piece of notations to formulate the theorem.  
Let $\imath_\la:\Fl^a_{n}\to \bP(V_\la^a)$ be the standard map (which is an embedding for regular $\la$).
We denote by $\gamma_\la(\bS)$ the (extended) weight of $\imath_\la(p(\bS))$ (note that this weight
depends only on the diagonal entries $S_{i,i}$).
In other words, $\gamma_\la(\bS)=\sum_{i=1}^{n-1}\ell_i\gamma(S_{i,i})$
where $\la=\sum_{i=1}^{n-1}\ell_i\omega_i$.

\begin{thm}
The $q$-character of the representation $V^a_\la$ is given by the sum over all
admissible collections $\bS$ of the summands
\begin{equation}\label{AB}
\frac{e^{\gamma_\la(\bS)}}
{\prod_{1\le i\le j<n} \left(1-e^{\gamma(S'_{i,j})}e^{-\gamma(S_{i,j})}\right)}.
\end{equation}
\end{thm}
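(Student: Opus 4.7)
The plan is to apply the Atiyah-Bott-Lefschetz formula to the smooth projective variety $R_n$ equipped with the line bundle $\pi_n^*\cL_\la$, combined with the Borel-Weil-type Theorem \ref{divan}. Since $H^{>0}(R_n,\pi_n^*\cL_\la)=0$ and $H^0(R_n,\pi_n^*\cL_\la)^*\simeq V_\la^a$ as $T\rtimes \bC^*$-modules, the $q$-character $\ch_q V_\la^a$ is equal to the equivariant Euler characteristic $\chi(R_n,\pi_n^*\cL_\la)$ computed with the convention that characters of duals are expressed through inverse weights; equivalently, after dualizing, it is a sum over fixed points with the natural sign conventions.

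First, I would enumerate the $T\rtimes\bC^*$-fixed points of $R_n$. By the cell description (Theorem \ref{cells}), the fixed points are exactly the products $p(\bS)=\prod_{1\le i\le j<n} p(S_{i,j})$ for admissible collections $\bS$; this gives the indexing set for the sum. Second, at each such fixed point the weight of the fiber of $\pi_n^*\cL_\la=\imath_\la^*\cO(1)$, pulled back from $\bP(V_\la^a)$, is the weight of the line $\imath_\la(p(\bS))$, which is $\gamma_\la(\bS)=\sum_{i=1}^{n-1}\ell_i\gamma(S_{i,i})$; this produces the numerator $e^{\gamma_\la(\bS)}$.

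Third, and this is the core computation, I would determine the $T\rtimes\bC^*$-weights of the tangent space $T_{p(\bS)}R_n$. Using the tower of $\bP^1$-fibrations $\rho_k:R_n(k)\to R_n(k-1)$ constructed in Proposition \ref{birat}, the tangent space decomposes as a direct sum indexed by pairs $(i,j)$, one summand per fibration step. At the fixed point $p(\bS)$, the fiber of $\rho_{i,j}$ is $\bP(V_{i,j+1}\oplus \bC w_{j+1}/V_{i-1,j})$ (with the obvious modifications when $i=1$ or $j=n-1$); this $\bP^1$ contains exactly two $T$-fixed points, corresponding to the two admissible choices $S_{i,j}$ and $S'_{i,j}$ described in the theorem. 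The tangent weight at $p(S_{i,j})$ is therefore $\gamma(S'_{i,j})-\gamma(S_{i,j})$, which yields the factor $1-e^{\gamma(S'_{i,j})}e^{-\gamma(S_{i,j})}$ in the denominator. Assembling these via Atiyah-Bott-Lefschetz gives exactly the formula \eqref{AB}.

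The main obstacle will be the tangent-weight computation, specifically verifying that the two $T$-fixed points of the $\bP^1$-fiber of $\rho_{i,j}$ over a fixed $\bS$-point are precisely $p(S_{i,j})$ and $p(S'_{i,j})$ and that the extended torus weight of the associated tangent line is exactly $\gamma(S'_{i,j})-\gamma(S_{i,j})$. This should follow from the explicit description of the fibers in Proposition \ref{birat} together with the $\g^a\oplus \bC d$-module structure on $W_{i,j}$ from Lemma \ref{ij}: the two $T$-fixed lines in the $\bP^1$-fiber correspond to adding $\bC w_{a^\bS_{i,j}}$ versus $\bC w_{b^\bS_{i,j}}$ to $V_{i-1,j}$, and the weights of these two lines, read off from Lemma \ref{ij} and the definition of $\gamma$, differ by exactly the claimed amount. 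Once this tangent-weight identification is in place, the rest is a direct application of the fixed-point formula and the earlier BW-type vanishing.
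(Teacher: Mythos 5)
Your proposal is correct and follows essentially the same route as the paper: apply the Atiyah--Bott--Lefschetz formula to $R_n$ with $\pi_n^*\cL_\la$ and the extended torus, use the vanishing and BW-type identification of $H^0$, index the fixed points by admissible collections $\bS$, and read off the tangent weights $\gamma(S'_{i,j})-\gamma(S_{i,j})$ from the tower of $\bP^1$-fibrations. The only point to make precise is the dualization bookkeeping you gesture at: the fiber of $\cO(1)$ at $p(\bS)$ is the dual line of weight $-\gamma_\la(\bS)$, so the fixed-point sum literally computes $\ch (V_\la^a)^*$, and one passes to $\ch_q V_\la^a$ (and to the signs appearing in \eqref{AB}) by the substitution $z_i\mapsto z_i^{-1}$, $q\mapsto q^{-1}$, exactly as the paper records in the remark following its proof.
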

\begin{proof}
Recall the Atiyah-Bott-Lefschetz formula (see \cite{AB}, \cite{T}): let $X$ be a smooth projective algebraic
$M$-dimensional variety and let $\cL$ be a line bundle on $X$. Let $T$ be an algebraic torus acting on $X$ with a finite
set $F$ of fixed points. Assume further that $\cL$ is $T$-equivariant.
Then for each $p\in F$ the fiber $\cL_p$ is $T$-stable. We note also that since $p\in F$, the tangent space
$T_pX$ carries a natural $T$-action. Let $\gamma_1^p,\dots,\gamma_M^p$ be the
weights of the eigenvectors of $T$-action on $T_pX$.
Then the Atiyah-Bott-Lefschetz formula gives the following expression
for the character of the Euler characteristics:
\begin{equation}\label{ABL}
\sum_{k\ge 0} (-1)^k \ch H^k(X,\cL)=\sum_{p\in F} \frac{\ch \cL_p}{\prod_{l=1}^M (1-e^{-\gamma_l^p})}.
\end{equation}
We apply this formula in our situation: 
$X=R_{n}$, $\cL=\pi_{n}^* \cL_\la$ with the action of the extended 
torus $T\rtimes \bC^*$. 
Since $H^{>0}(R_n,\pi_n^*\cL_\la)=0$, the Euler
characteristics coincides with the character of the zeroth cohomology, i.e. with the character of
$(V_\la^a)^*$. Therefore, for each admissible $\bS$ we need to compute the character of 
$\pi_{n}^* \cL_\la$ at $p(\bS)$ and the eigenvalues of the torus action in $T_{p(\bs)}R_n$.
Further, the sum in \eqref{ABL}
runs over the set of $T$-fixed points in $R_n$ and for each summand the numerator in the $\bS$-th term is exactly the character of the dual 
line $\left(\imath_\la \pi_n p(\bS)\right)^*$, 
which equals to $e^{-\gamma_\la(\bS)}$ (the minus sign comes from the fact that
$\cL_\la=\imath_\la^*\cO(1)$ and a fiber of $\cO(1)$ is the dual line).
It only remains to compute the torus action in the tangent space $T_{p(\bS)}R_n$.

Recall that $R_n$ is a tower of successive $\bP^1$-fibrations $R_n(l)\to R_n(l-1)$. Fix an admissible
$\bS$. Then the surjections $R_n\to R_n(l)$ define the $T$-fixed points $p(\bS(l))$ in each $R_n(l)$
(note that $\bS(l)$ consists of $S_{i,j}$ such that for $\beta_k=\al_{i,j}$ one has $k\le l$).
For each $l=1,\dots,M$ we denote by $v_l\in T_{p(\bS(l))}R_n(l)$ a tangent vector to the fiber
of the map $R_n(l)\to R_n(l-1)$ at the point $p(\bS(l-1))$.
Then it is easy to see that the weights of the eigenvectors of the $T$ action in $T_{p(\bs)}R_n$
are exactly the weights of the vectors $v_l$, $l=1,\dots,M$.

So let us fix an $l$,  $1\le l\le M$ and $i,j$ with  $\al_{i,j}=\beta_l$.
Let us denote by $Y_l$ the set of all pairs $(t,u)$ such that for the root $\al_{t,u}=\beta_r$ one has $r\le l$.
Then the fiber $\bP^1$ of the map $R_n(l)\to R_n(l-1)$
at the point $p(\bS(l-1))$ consists of all collections $(V_{t,u})$ with $(t,u)\in Y_l$ subject to the following
conditions:
\begin{itemize}
\item $V_{t,u}=p(S_{t,u})$ if $\al_{t,u}\ne \beta_l$,\\
\item $V_{i,j}\supset p(S_{i-1,j})$,\\
\item $V_{i,j}\subset p(S_{i-1,j})\oplus\bC w_{a^\bS_{i,j}}\oplus\bC w_{b^\bS_{i,j}}$.
\end{itemize}
Now it is easy to see that the character of the tangent vector to this fiber at the point
$p(\bS(l-1))$ is equal to $e^{\gamma(S'_{i,j})}e^{\gamma(S_{i,j})^{-1}}$
(recall $a^\bS_{i,j}\in S_{i,j}$ and $S'_{i,j}=(S_{i,j}\setminus \{a^\bS_{i,j}\}) \cup \{b^\bS_{i,j}\}$).
\end{proof}

\begin{rem}
We note that the Euler characteristics 
$$\sum_{k\ge 0} (-1)^k \ch H^k(R_{n},\pi_{n}^*\cL_\la)$$
is equal to $\ch (V_\la^a)^*$. But in each summand \eqref{AB} both numerator and denominator
differ from the corresponding summand in the Atiyah-Bott-Lefschetz formula \eqref{ABL} by the changed 
of variables $z_i\to z_i^{-1}$ and $q\to q^{-1}$. Via this change we pass from the character of
$(V_\la^a)^*$ to the character of $V_\la^a$. 
\end{rem}

\begin{example}
Let $n=2$. Then the formula above says
\[
\ch_q V_{m\om}=\frac{z^m}{1-qz^{-2}}+\frac{z^{-m}q^m}{1-q^{-1}z^2}=z^m + qz^{m-2} +\dots + q^mz^{-m}.
\]
\end{example}
\
\begin{example}
Let $n=3$. Then the contribution of a fixed point with $S_{1,1}=(2)$, $S_{1,2}=(1)$, $S_{2,2}=(1,3)$  is given by
\[
\frac{q^2}{(1-z_1^{-1}z_2^{-1}q)(1-z_1^{2}z_2^{-1}q)(1-z_1^{-1}z_2^{2}q)}
\]
and the contribution of a fixed point with $S_{1,1}=(2)$, $S_{1,2}=(3)$, $S_{2,2}=(1,3)$  is given by
\[
\frac{q^2}{(1-z_1z_2q^{-1})(1-z_1z_2^{-2})(1-z_1^{-2}z_2)}.
\]
We note that these are exactly the points which are mapped by $\pi_3$ to the only singular point of
$\Fl^a_3$, which is  torus fixed and labeled by $S_{1,1}=(2)$, $S_{2,2}=(1,3)$.
\end{example}

\section*{Acknowledgments}
We are grateful to Shrawan Kumar, Alexander Kuznetsov, and Peter Littelmann
for useful discussions.
We are also grateful to R.~Bezrukavnikov and D.~Kazhdan for organizing
the 15th Midrasha Mathematicae ``Derived Categories of Algebro-Geometric Origin
and Integrable Systems'' at IAS at the Hebrew University of Jerusalem where
this work was conceived.
This paper was written during the E.~F. stay at the Hausdorff Research Institute for Mathematics.
The hospitality and perfect working conditions of the Institute are gratefully
acknowledged.
The work of Evgeny Feigin was partially supported
by the Russian President Grant MK-3312.2012.1, by the Dynasty Foundation,
by the AG Laboratory HSE, RF government grant, ag. 11.G34.31.0023, by the RFBR grants
12-01-00070, 12-01-00944 and by the Russian Ministry of Education and Science under the
grant 2012-1.1-12-000-1011-016.
M.~F. was partially supported by the RFBR grant 12-01-00944, the National Research University Higher School of Economics' Academic Fund award No.12-09-0062 and
the AG Laboratory HSE, RF government grant, ag. 11.G34.31.0023.
This study was carried out within the National Research University Higher School of Economics
Academic Fund Program in 2012-2013, research grant No. 11-01-0017.
This study comprises research findings from the ``Representation Theory
in Geometry and in Mathematical Physics" carried out within The
National Research University Higher School of Economics' Academic Fund Program
in 2012, grant No 12-05-0014.

\end{document}